\DeclarePairedDelimiter\floor{\lfloor}{\rfloor}
\newtheorem{theorem}{Theorem}[section]
\newtheorem{conjecture}[theorem]{Conjecture}
\newtheorem{lemma}[theorem]{Lemma}
\newtheorem{remark}[theorem]{Remark}
\begin{document}

\title{On the eigenvalues of Grassmann graphs, Bilinear forms graphs and Hermitian forms graphs}
\author{Sebastian M. Cioab\u{a}\footnote{Department of Mathematical Sciences, University of Delaware, Newark, DE 19716-2553, {\tt cioaba@udel.edu}} \, and Himanshu Gupta\footnote{Department of Mathematical Sciences, University of Delaware, Newark, DE 19716-2553, {\tt himanshu@udel.edu}}}
\date{\today}
\maketitle

\begin{abstract}
Recently, Brouwer, Cioab\u{a}, Ihringer and McGinnis obtained some new results involving the eigenvalues of various graphs coming from association schemes and posed some conjectures related to the eigenvalues of Grassmann graphs, bilinear forms graphs and Hermitian forms graphs. In this paper, we prove some of their conjectures.    
\end{abstract}

\section{Introduction}

The eigenvalues of graphs are closely related to important combinatorial parameters and play important roles in many situations (see \cite{BH,GR} for example). In particular, the smallest eigenvalue of a graph is related to its independence number and the size of its largest bipartite subgraph, also known as the max-cut of the graph. The connection to the independence number has been exploited to give algebraic proofs of many Erd\H{o}s-Ko-Rado type problems (see \cite{GM}). The interactions between the smallest eigenvalue and the max-cut is the basis of the famous Goemans-Williamson semidefinite programming approximation algorithm whose performance ratio is at least $\alpha=\frac{2}{\pi}\min_{0<\theta\leq \pi}\frac{\theta}{1-\cos\theta}$ ($0.87856<\alpha<0.87857$) (see \cite{GW}). Karloff \cite{K} proved that the performance ratio of the Goemans-Williamson SDP approximation algorithm is exactly $\alpha$. The crucial part of Karloff's argument was determining the smallest eigenvalue of some graphs in the Johnson association scheme. Karloff made a conjecture about the smallest eigenvalue of Johnson graphs in a wider range of parameters; this conjecture was recently proved in \cite{BCIM}. Goemans and Williamson \cite{GW} also proved that the performance ratio of their algorithm can be improved for graphs that are close to being bipartite. Alon and Sudakov \cite{AS} (see also Alon, Sudakov and Zwick \cite{ASZ}) extended Karloff's work and showed that the performance ratio of this extended Goemans-Williamson algorithm is best possible. Again, the key part of Alon and Sudakov's work was determining the smallest eigenvalue of some graphs in the Hamming association scheme. The smallest eigenvalue of graphs in the Hamming scheme was determined for various ranges in \cite{AS,vDS,DK}. Van Dam and Sotirov \cite{vDS} posed a conjecture regarding the smallest eigenvalue of some graphs in the Hamming scheme; this was proved in the binary case in \cite{DK} and in full generality in \cite{BCIM}. In addition, the paper \cite{BCIM} investigated the eigenvalues of various graphs in other association schemes such as the Grassmann scheme, the bilinear forms scheme and the Hermitian forms scheme. In this paper, we extend that work and prove some of the conjectures from \cite{BCIM} regarding the eigenvalues of these graphs. We describe the results in more details in the following subsections.

\subsection{Grassmann graphs}

Let $V$ be a vector space of dimension $n$ over the field $\mathbb{F}_q$. The vertices of the Grassmann scheme $G_q(n,d)$ are the $d$-dimensional subspaces of $V$. Any two subspaces are $j$-related in the scheme if and only if their intersection has dimension $d-j$ for $0 \leq j \leq d$. Since $G_q(n,d)$ is isomorphic to $G_q(n,n-d)$ by an isomorphism that maps a subspace to its orthogonal complement so we assume that $n \geq 2d$. We denote the graphs of the Grassmann scheme by $G_q(n,d,j)$ for $0 \leq j \leq d$. The eigenvalues of $G_q(n,d,j)$ are $P_{ij} = G_j(i)$ $(0 \leq i \leq d)$, where 
\begin{align}
 G_j(i) =& \sum_{h=0}^{j}(-1)^{j-h}q^{hi+\binom{j-h}{2}} { d-i \brack h}{d-h \brack j-h}{ n-d-i+h \brack h} \label{eq:firstexpressiongrassman} \\ 
 =& \sum_{h=0}^{i}(-1)^{i-h}q^{j(j-i+h)+\binom{i-h}{2}} {i \brack h}{d-h \brack j}{ n-d-i+h \brack n-d-j} \label{eq:secondexpressiongrassman}
 \end{align}
(see Delsarte \cite{Del}, Theorem 10, and Eisfeld \cite{Es}, Theorem 2.7). 

The following conjecture was posed in \cite[Conjecture 5.5]{BCIM}.

%----------------------------------------------
\begin{conjecture}\label{ConjGrassmann}
\begin{enumerate}[(i)]
\item If $(n,q) \neq (2d,2)$, then $|G_j(i+1)| < |G_j(i)|$ where $0 \leq i \leq d-1$.
\item If $(n,q) = (2d,2)$, then $G_j(d-j)$ is negative for $(d,j) = (5,3)$ and when $d \geq 6$, $2 \leq j \leq d-2$, and $G_j(d-j)$ is the smallest among the $G_j(i)$ when $d \geq 6$, $3 \leq  j \leq d-2$.
\end{enumerate}
\end{conjecture}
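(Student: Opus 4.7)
The plan is to work with expression (\ref{eq:secondexpressiongrassman}) after the reindexing $k := i - h$, which rewrites
\[
G_j(i) = q^{j^2}\sum_{k=0}^{i}(-1)^k q^{-jk+\binom{k}{2}}{i \brack k}{d-i+k \brack j}{n-d-k \brack j-k}.
\]
Since ${a \brack b}$ is a polynomial in $q$ of degree $b(a-b)$, the $k=0$ summand $q^{j^2}{d-i \brack j}{n-d \brack j}$ is positive and, when $n > d+i+j-1$, has the largest $q$-degree in the sum. I would factor this leading term out and control the alternating tail using the standard inequalities $q^{b(a-b)} \le {a \brack b} \le q^{b(a-b)} \prod_{s \ge 1}(1-q^{-s})^{-1}$, yielding an estimate of the form $G_j(i) = q^{j^2}{d-i \brack j}{n-d \brack j}(1 + \epsilon_i)$ with $|\epsilon_i|$ uniformly bounded away from $1$. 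In the opposite regime $n \le d+i+j-1$, I would instead apply the same strategy to (\ref{eq:firstexpressiongrassman}), in which the $h = j$ term carries leading $q$-degree $j(n-d)$.

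For part (i), the estimate above reduces $|G_j(i+1)| < |G_j(i)|$ to the explicit ratio
\[
\frac{{d-i-1 \brack j}}{{d-i \brack j}} = \frac{q^{d-i-j}-1}{q^{d-i}-1} < q^{-j},
\]
which, together with the control on $\epsilon_i$, gives the claim. The geometric-series tail bound is uniform for $q \ge 3$; for $q = 2$ I would exploit the extra factor of $q^{n-2d}$ hidden in ${n-d \brack j}$ to absorb the weaker decay at $q=2$. This explains exactly why the lone exceptional case is $(n,q) = (2d,2)$.

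For part (ii), the strategy is to use (\ref{eq:firstexpressiongrassman}) at $i = d-j$ with $n = 2d$ and $q = 2$. The $q$-degrees of the $h = j$ (sign $+$) and $h = j - 1$ (sign $-$) summands differ by exactly $d - i - j + 1 = 1$, and a careful comparison at $q = 2$ should show that the $h=j-1$ term dominates once its lower-degree factors are weighed in, yielding $G_j(d-j) < 0$ after the remaining $h \le j-2$ tail is bounded. The small case $(d,j) = (5,3)$ would be handled by direct numerical evaluation. Showing that $G_j(d-j)$ attains the minimum over $i$ then combines this with monotonicity-style arguments as in part (i), applied to the sub-ranges of $i$ where one of the two leading terms still dominates.

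The hardest part is the boundary case $q=2, n=2d$, where neither formula (\ref{eq:firstexpressiongrassman}) nor (\ref{eq:secondexpressiongrassman}) admits a clean leading-term dominance, and the subleading terms must be tracked with much greater precision. I expect this will require an explicit $q$-telescoping identity, for example a closed-form expression for ${d-i \brack j} - q^j{d-i-1 \brack j}$ in terms of lower Gaussian binomials, to turn the leading-term ratio into an exact positive quantity rather than an asymptotic estimate.
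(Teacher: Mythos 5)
Your overall strategy for part (i) — isolate the dominant term of the alternating sum \eqref{eq:secondexpressiongrassman}, bound the tail, and compare leading terms of $|G_j(i)|$ and $|G_j(i+1)|$ — is the same one the paper uses, but as written it has two concrete gaps. First, your dichotomy misidentifies the leading term in the regime you call ``opposite'': since $n\geq 2d$, the condition $n\leq d+i+j-1$ forces $i+j>d$, and then \emph{both} of your proposed leading terms vanish identically, because the $k=0$ summand of your reindexed \eqref{eq:secondexpressiongrassman} and the $h=j$ summand of \eqref{eq:firstexpressiongrassman} each contain the factor ${d-i \brack j}=0$. The true dominant term is the first nonvanishing one, $k=i+j-d$ (in the paper's indexing $h_{\max}=\min\{i,d-j\}$), and your plan never analyzes it; this is exactly the case ($n=2d$ with $i\geq d-j$ for $q\geq 3$, and $n=2d+1$ with $i\geq d-j$ for $q=2$) where the paper has to work hardest. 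Second, the quantitative control you invoke is too weak to close the argument. ``$|\epsilon_i|$ uniformly bounded away from $1$'' combined with ${d-i-1\brack j}/{d-i\brack j}<q^{-j}$ does not suffice when $j=1$ (the leading-term gain is a single power of $q$, and the paper needs the sharpened lower bound of Remark \ref{remarkgrassman} even for $q\geq 3$), and at $q=2$ the generic bound ${a\brack b}\leq q^{b(a-b)}\prod_{s\geq 1}(1-2^{-s})^{-1}$ loses a factor of about $3.46$ per Gaussian coefficient, far more than the factor $2^{n-2d}=2$ you propose to exploit in the critical case $n=2d+1$, $i\geq d-j$. There the paper must compute the consecutive-term ratio and the leading-term ratio exactly (Lemmas \ref{Lemma0.8}(iv) and \ref{Lemma0.9}(iv)) and wins only because the product $kl$ exceeds $1$ by an explicit, barely positive margin; crude geometric-series estimates of the kind you describe do not even show the series is alternating with increasing terms in that case.

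For part (ii) your sketch is not a proof, and it should be flagged that the paper itself does not prove it either: part (ii) is known only for $7\leq j\leq d-5$ (from the earlier work it cites) and remains open for $j\in\{2,3,4,5,6,d-4,d-3,d-2\}$; the theorems proved here (Theorems \ref{q>2} and \ref{q=2}) cover part (i) only. So the step ``a careful comparison at $q=2$ should show that the $h=j-1$ term dominates'' is precisely the open difficulty, not something that can be deferred, and in the case $n=2d$, $q=2$ the relevant competing terms again involve indices near $h=d-j$ rather than $h=j$, for the same vanishing reason as above. Relatedly, you locate the hardest boundary case of part (i) at $q=2$, $n=2d$, but that case is excluded from (i) by hypothesis; the genuinely delicate cases for (i) are $q=3$, $n=2d$ and $q=2$, $n=2d+1$, which is where your estimates would need to be substantially sharpened.
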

%----------------------------------------------

Part (i) has been proved in \cite{BCIM} for $q \geq 5$, see \cite{BCIM}. In this paper, we extend that proof for $q \geq 3$ in Theorem \ref{q>2} and for $q=2$ and $n\geq 2d+1$ in Theorem \ref{q=2}. Part (ii) has been proved in \cite[Theorem 5.8]{BCIM} for $7 \leq j \leq d-5$ and it is still open for $j\in\{2,3,4,5,6,d-4,d-3,d-2\}$.

\subsection{Bilinear forms graphs}

The vertices of the Bilinear forms scheme $H_q(d, e)$ are all the $d\times e$ matrices over the field $\mathbb{F}_q$, where $d\leq e$. Any two matrices are $j$-related in the scheme if and only if their difference has rank $j$ for $0\leq j \leq d$. We denote the graphs of the Bilinear forms scheme by $H_q(d,e,j)$ for $0\leq j \leq d$. The eigenvalues of $H_q(d,e,j)$ are $P_{ij} = B_j(i)$ ($0 \leq i \leq d$), where
$$
B_j(i) = \sum_{h=0}^j (-1)^{j-h}q^{eh+\binom{j-h}{2}} { d-h \brack d-j}{d-i \brack h}
$$
(see Delsarte \cite{DelB}, Theorem A2).

The following conjecture was posed in \cite[Conjecture 7.6]{BCIM}.

%--------------------Conjecture1--------------------------
\begin{conjecture}\label{Bilinear3.1}
For $q \geq 3$, or $q = 2$ and $d \neq e$, $B_j(d - j + 1)$ is the smallest
eigenvalue in the distance-$j$ graph for $1 \leq  j \leq d$. 
\end{conjecture}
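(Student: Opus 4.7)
The plan is to prove that $B_j(d-j+1)$ is the smallest eigenvalue by a three-step monotonicity argument, modeled on the Grassmann-graph approach of \cite{BCIM}. I would establish in turn: (1) $B_j(d-j+1) < 0$; (2) $B_j(i) \geq 0$ for $0 \leq i \leq d-j$; and (3) $B_j(i+1) \geq B_j(i)$ for $d-j+1 \leq i \leq d-1$. These three facts together yield the conclusion, since on the left range the eigenvalues are non-negative and thus exceed $B_j(d-j+1)$, while on the right range they are non-decreasing starting from $B_j(d-j+1)$.

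Step (1) is reasonably direct: at $i = d-j+1$ the top term of the defining sum vanishes because ${d-i \brack j} = {j-1 \brack j} = 0$, so
\[
B_j(d-j+1) = \sum_{h=0}^{j-1}(-1)^{j-h} q^{eh+\binom{j-h}{2}} {d-h \brack d-j}{j-1 \brack h},
\]
and the $h = j-1$ term $-q^{e(j-1)}{d-j+1 \brack 1}$ can be shown to dominate the remaining alternating sum by a standard ratio-of-consecutive-terms estimate. Step (2) is similar in spirit: the positive $h = j$ contribution $q^{ej}{d-i \brack j}$ dominates the alternating sum of the lower-$h$ terms when $q \geq 3$ (or when $q = 2$ and $e \geq d+1$, which is exactly where the hypothesis $d \neq e$ enters). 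Step (3), the technical heart, I would approach by applying the $q$-Pascal identity
\[
{d-i \brack h} = {d-i-1 \brack h} + q^{d-i-h}{d-i-1 \brack h-1}
\]
to rewrite $B_j(i+1) - B_j(i)$ as an alternating sum whose structure, after reindexing $h \mapsto h+1$, resembles a value of $B_{j-1}$ with shifted parameters. The sign of the resulting expression is then controlled by induction on $j$, using Steps (1)--(2) as base cases.

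The main obstacle will be Step (3) in the case $q = 2$ with $d \neq e$: here the dominance estimates are tight and the inductive step is likely to require separate treatment of boundary configurations (for example, $j$ small, or $i$ within a bounded distance of $d$), in the same spirit as the $q = 2$ analysis for Grassmann graphs in Theorem \ref{q=2} of the present paper. A fallback strategy, should the induction on $j$ prove awkward, is to exploit the generating function $\sum_{j} B_j(i) z^j$, which factors as a product of $q$-shifted factorials; the desired monotonicity can sometimes be extracted directly from this closed form by combinatorial manipulations, bypassing the need for a clean recurrence.
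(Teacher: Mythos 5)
Your Steps (1) and (2) are fine (they are the easy part of the sign analysis), but Step (3) is false, and with it the whole scheme collapses. By Lemma \ref{Bilinear3.2} the sign of $B_j(i)$ is $(-1)^{\max(0,\,j+i-d)}$, so for $i\geq d-j+1$ the eigenvalues \emph{alternate} in sign as $i$ increases, while their absolute values strictly decrease; such a sequence cannot be non-decreasing once it has length at least three, i.e.\ whenever $j\geq 3$. Concretely, for $q=3$, $d=e=3$, $j=3$ one computes $B_3(1)=-432$, $B_3(2)=54$, $B_3(3)=-27$, so $B_3(3)<B_3(2)$ although $2\geq d-j+1$. Thus the statement you plan to prove by the $q$-Pascal recurrence and induction on $j$ is simply not true, and no amount of care with the $q=2$ boundary cases will rescue it: the inductive machinery of Step (3) is aimed at the wrong target. (The conclusion of the conjecture survives in the example only because $|B_3(1)|$ dominates the later negative values, which is precisely the point you are missing.)

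What is actually needed, and what the paper proves, is control of \emph{absolute values}: $|B_j(i)|>|B_j(i+1)|$ for all $0\leq i\leq d-1$ (Theorem \ref{Bilinear3.4} for $q\geq 3$, Theorem \ref{Bilinear3.7} for $q=2$, $e\geq d+1$), obtained by bracketing each $B_j(i)$ between its dominant term and that term minus the next one (Lemma \ref{Bilinear3.2}, via the alternating-series Lemma \ref{mainlemma} and the Gaussian-coefficient estimates of Lemma \ref{estimatelemma}), and then comparing the dominant terms of $B_j(i)$ and $B_j(i+1)$; the $q=2$ case needs sharper term-ratio bounds rather than a structurally different argument. Combined with the sign pattern, the negative eigenvalues are exactly $B_j(d-j+1+2t)$, $t\geq 0$, and since the absolute values decrease in $i$, the first negative one, $B_j(d-j+1)$, is the smallest (Theorem \ref{Bilinear3.8}). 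If you want to salvage your outline, replace Step (3) by the statement ``$|B_j(i)|$ is strictly decreasing in $i$'' and redirect your estimates to the ratio $|B_j(i)|/|B_j(i+1)|$; the generating-function fallback, as described, is too vague to substitute for these quantitative bounds.
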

This conjecture was proved for $q\geq 4$ in \cite{BCIM} (see Proposition 7.8). Here we complete its proof for $q\geq 2$ in Theorem \ref{Bilinear3.8}.

\subsection{Hermitian forms graphs}

The vertices of the Hermitian forms scheme $Q_q(d)$ are all the $d\times d$ Hermitian matrices over $\mathbb{F}_{q^2}$. Any two Hermitian matrices are $j$-related in the scheme if and only if their difference has rank $j$ for $0\leq j \leq d$. We denote the graphs of the Hermitian forms scheme by $Q_q(d,j)$ for $0\leq j \leq d$. The eigenvalues of $Q_q(d,j)$ are $P_{ij} = Q_j(i)$ ($0 \leq i \leq d$), where
$$Q_j(i) = (-1)^j \sum_{h=0}^{j} (-q)^{\binom{j-h}{2}+hd}{d-h \brack d-j}_b{d-i \brack h}_b$$ (see Schmidt \cite{Schmidt} and Stanton \cite{Stanton}). Here the Gaussian coefficients have base $b = -q$, that is, for any non-negative integers $m$ and $l$ we have ${m \brack l}_b = \prod_{i=1}^{l}\frac{(-q)^{m-i+1}-1}{(-q)^i-1}$. Note that its sign is $(-1)^{(m+1)l}$.

The following conjectures were posed in \cite{BCIM} (see Conjecture 9.2 and 9.3).
\begin{conjecture}\label{conj1}
 \begin{enumerate}[(i)]
 \item If $j$ is odd, then $Q_j(1) \leq Q_j(i)$ for $0 \leq i \leq d$.
 \item If $j$ is even, $j\geq 2$, then $Q_j(d-j+2) \leq Q_j(i)$ for $0\leq i \leq d$.
 \end{enumerate}
\end{conjecture}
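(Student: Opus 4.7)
My approach would follow the template established in \cite{BCIM} and in Theorems \ref{q>2}, \ref{q=2} and \ref{Bilinear3.8} of this paper for the Grassmann and bilinear forms graphs: locate the minimum of $Q_j(\cdot)$ by analyzing the sign pattern of the first difference $Q_j(i+1)-Q_j(i)$. The new complication is that the Gaussian coefficients appearing in $Q_j(i)$ have negative base $b=-q$, so each ${m \brack l}_b$ carries a sign $(-1)^{(m+1)l}$, and one has to keep track of the parities of $d$, $i$, $j$ and the summation index $h$ simultaneously throughout the argument.

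The first step would be to rewrite ${d-i-1 \brack h}_b-{d-i \brack h}_b$ in the telescoped expression for $Q_j(i+1)-Q_j(i)$ using the $(-q)$-Pascal identity
$${d-i \brack h}_b = {d-i-1 \brack h}_b + (-q)^{d-i-h}{d-i-1 \brack h-1}_b.$$
After substitution and a shift $h \mapsto h+1$ of the summation index, $Q_j(i+1)-Q_j(i)$ collapses into a signed sum in which the dependence on $i$ is concentrated in the factor $(-q)^{d-i}$ and in ${d-i-1 \brack h}_b$, while the remaining $i$-independent coefficients are explicit polynomials in $-q$ arising from ${d-h \brack d-j}_b$ and from the exponent $\binom{j-h}{2}+hd$. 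From this representation, part (i) reduces to proving that $Q_j(1)\leq Q_j(0)$ and that $Q_j(i+1)\geq Q_j(i)$ for $1\leq i\leq d-1$ when $j$ is odd, while part (ii) reduces to showing that, for even $j\geq 2$, the sign of $Q_j(i+1)-Q_j(i)$ changes from nonpositive to nonnegative precisely as $i$ crosses $d-j+1$. In both cases the key is a dominant-term estimate: for $q\geq 3$ the geometric growth of $|{d-i-1 \brack h}_b|$ in $q$ lets the leading $h$-term dominate the remainder; for $q=2$ the margins are tighter and will require sharper bounds or direct verification of a handful of small cases.

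The principal obstacle is the sign bookkeeping, since the sign of each summand depends on the parities of $d-i$, $d-j$, $h$, $d-h$ and of $\binom{j-h}{2}+hd$, and collecting these into a single monotone statement is the delicate part of the proof. A secondary obstacle is the interaction between the two cases and the parity of $d$: the off-center location of the minimum at $d-j+2$ in (ii) (rather than, say, $d-j+1$ or a symmetric position) comes from the sign of the prefactor $(-q)^{d-i}$ at that specific point, so one must verify the "dominant term beats the rest" estimate separately in each parity class. Finally, the boundary indices $i\in\{0,1,d-j+1,d-j+2\}$ and the small-$d$ regime in which several Gaussian coefficients ${d-h \brack d-j}_b$ degenerate or vanish will likely need to be handled by direct substitution, in the style of the small-case verifications in \cite{BCIM}.
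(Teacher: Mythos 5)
There is a genuine gap: the monotonicity structure your reduction relies on is false. For odd $j$ and $i\leq d-j$ the sign of $Q_j(i)$ equals that of the dominant term $T_{h_{\max}}(i,j)$, namely $(-1)^{ij+\binom{j-h_{\max}}{2}}=(-1)^{i}$, while $|Q_j(i)|$ strictly decreases in $i$; so the sequence $Q_j(0),Q_j(1),Q_j(2),\dots$ oscillates $+,-,+,-,\dots$ with shrinking magnitude. In particular $Q_j(3)<0<Q_j(2)$, so your claim that $Q_j(i+1)\geq Q_j(i)$ for all $1\leq i\leq d-1$ fails already at $i=2$. Likewise for even $j$: for $i=d-j+m$ the sign is $(-1)^{\binom{m}{2}}$, i.e.\ $+,-,-,+,+,-,-,\dots$, so the first difference $Q_j(i+1)-Q_j(i)$ does not change sign once at $i=d-j+1$ and stay nonnegative afterwards; it keeps changing sign. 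So the plan of locating the minimum by proving that the telescoped difference has a single sign change cannot work, no matter how carefully the $(-q)$-Pascal identity and the parity bookkeeping are carried out. (Your heuristic for why the minimum sits at $d-j+2$ is also not the prefactor $(-q)^{d-i}$ alone; it is the quadratic exponent $\binom{j-h_{\max}}{2}$ with $h_{\max}=d-i$ that produces the $+,-,-,+,+$ pattern.)

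The paper's route avoids differences entirely: it proves $|Q_j(i)|>|Q_j(i+1)|$ for all relevant $i$ (Lemmas \ref{lemma3}--\ref{lemma11}, assembled in Theorem \ref{HermitianTheorem}), by showing the terms of the sum increase in absolute value up to $h_{\max}$ and sandwiching $|Q_j(i)|$ between explicit constant multiples of $|T_{h_{\max}}(i,j)|$ (Lemma \ref{threeparts}); it also records that the sign of $Q_j(i)$ agrees with that of $T_{h_{\max}}(i,j)$. Once the absolute values are strictly decreasing and the signs are known, the minimum eigenvalue is simply the first negative one: $i=1$ when $j$ is odd and $i=d-j+2$ when $j$ is even. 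If you want to keep your difference-based framework you would have to replace the monotonicity claims by comparisons of $Q_j(1)$ (resp.\ $Q_j(d-j+2)$) against every other $Q_j(i)$ individually, which in effect forces you back to the magnitude-plus-sign argument of the paper.
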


\begin{conjecture}\label{conj2}
Let $d\geq 3$. Then $|Q_j(i)| < |Q_j(1)|$ for $2 \leq i \leq d$. 
\end{conjecture}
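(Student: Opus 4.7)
The plan is to use the explicit formula to decompose
\[ Q_j(i) = (-1)^j \sum_{h=0}^{j} T_h(i), \qquad T_h(i) := (-q)^{\binom{j-h}{2}+hd}\, {d-h \brack d-j}_b\, {d-i \brack h}_b, \]
and to compare $|Q_j(i)|$ with $|Q_j(1)|$ by isolating one dominant term in each. Since $|T_h(i)|$ is comparable (up to bounded multiplicative fluctuations) to $q^{E(h,i)}$ with $E(h,i) := \binom{j-h}{2} + hd + (d-j)(j-h) + h(d-i-h)$, and $\partial_h E = \tfrac12 + d - i - h$ on the relevant range, the maximum occurs at $h^*(i) := \min(j, d-i)$, beyond which ${d-i \brack h}_b = 0$. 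The overall approach is to bound $|Q_j(i)|$ above by a constant multiple of $|T_{h^*(i)}(i)|$, bound $|Q_j(1)|$ below by a constant multiple of $|T_{h^*(1)}(1)|$, and close the gap using a strict inequality between the two dominant terms.

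I would first verify the exponent gap $E^*(1) - E^*(i)$ for $i \geq 2$, where $E^*(i) := E(h^*(i), i)$. A case split on whether $i \leq d-j$ yields
\[ E^*(1) - E^*(i) = \begin{cases} j(i-1), & i \leq d-j, \\ j(i-1) - \binom{i+j-d}{2}, & i > d-j, \end{cases} \]
and elementary algebra using $d \geq 3$ shows both expressions are $\geq 2$ whenever $j \geq 2$, so that $|T_{h^*(1)}(1)|/|T_{h^*(i)}(i)| \geq q^2$ up to bounded factors; the case $j = 1$ has gap only $1$ and is treated separately. To bound the non-dominant terms, I would apply the step identities
\[ \frac{{m-1 \brack l}_b}{{m \brack l}_b} = \frac{(-q)^{m-l}-1}{(-q)^m-1}, \qquad \frac{{m \brack l+1}_b}{{m \brack l}_b} = \frac{(-q)^{m-l}-1}{(-q)^{l+1}-1}, \]
to obtain $|T_{h+1}(i)/T_h(i)| = q^{d-i-h}(1 + O(q^{-1}))$. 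Since this ratio is at least $q$ for $h < h^*(i)$ and grows as $h$ decreases, the $|T_h(i)|$ decay super-geometrically, and the tail $\sum_{h < h^*(i)} |T_h(i)|$ is bounded by a small fraction of $|T_{h^*(i)}(i)|$. The upper bound on $|Q_j(i)|$ then follows by the triangle inequality, and the lower bound on $|Q_j(1)|$ by the reverse triangle inequality; combined with the $q^2$ gap, this closes the argument for $j \geq 2$.

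For $j = 1$ the inner sum has only two nonzero terms, and direct computation gives the closed form
\[ Q_1(i) = \frac{(-q)^{2d-i}-1}{q+1}, \]
from which $|Q_1(1)| = (q^{2d-1}+1)/(q+1) > (q^{2d-i}+1)/(q+1) \geq |Q_1(i)|$ is immediate for $2 \leq i \leq d$, completing that case. The hard part of the general argument is keeping the ratio estimates sufficiently tight when $q$ is small, especially $q = 2$, where the $O(q^{-1})$ fluctuations in $|T_{h+1}/T_h|$ can compete with the $q^2$ exponent gap; I would resolve this by replacing the asymptotic estimate with the explicit rational bound coming from $|(-q)^k - 1| = q^k + (-1)^{k+1}$, supplemented by direct verification for a finite list of small $(d, j)$ cases if needed.
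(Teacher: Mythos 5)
Your overall route is genuinely different from the paper's: you compare $|Q_j(i)|$ directly with $|Q_j(1)|$ via the dominant term $T_{h_{\max}}$ and an exponent gap, whereas the paper proves the full chain $|Q_j(i)|>|Q_j(i+1)|$ (Lemmas \ref{lemma3}--\ref{lemma11}), handles the genuine exception $q=2$, $i=d-1$, $j$ even or $j=d$ by showing $|Q_j(d-2)|>|Q_j(d)|$, and then reads off the conjecture in Theorem \ref{HermitianTheorem}(iv). Your direct comparison has the advantage of sidestepping that exceptional case entirely, your exponent-gap formula (both branches) checks out, and your $j=1$ closed form and conclusion are correct. However, the quantitative heart of the argument is missing, and one of the claims it rests on is false as stated. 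The assertion that $|T_{h+1}(i)/T_h(i)|\geq q$ for all $h<h_{\max}(i,j)$, hence that the tail $\sum_{h<h_{\max}}|T_h|$ is a small fraction of $|T_{h_{\max}}|$, breaks down at $q=2$ in the regime $h_{\max}(i,j)=d-i\leq j$: there the last step involves the factor $b^{d-i-h+1}-1=b-1$ of size $q+1$, and the ratio $|T_{d-i}/T_{d-i-1}|$ is bounded below only by about $8/5<q$ (equivalently, the paper's bound \eqref{eqn6} gives $|T_{d-i-1}/T_{d-i}|\leq 5/8$ at $q=2$). A purely absolute-value geometric tail bound then leaves very little room; the paper has to use the two-step ratio \eqref{useful-ratio} together with the sign pattern of the $T_h$ (the series in base $b=-q$ is not alternating) to obtain the workable two-sided constants of Lemma \ref{threeparts}, and your lower bound on $|Q_j(1)|$ needs exactly this delicate regime whenever $j\geq d-1$. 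Moreover, when the exponent gap is exactly $2$ (which happens only for $j=i=2$) you must beat $q^2=4$ at $q=2$ with the product of your sandwich constants and the correction factors in $|T_{h^*(1)}(1)|/|T_{h^*(i)}(i)|$; this does work out, but only because the bad-constant regime $i>d-j$ forces a gap of at least $d-1$, an interplay your sketch neither states nor proves. Without producing the explicit constants and this case analysis, the proof is not closed.

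The second gap concerns the range $3\leq d\leq 5$. Estimates of this kind degrade for small $d$ (correction factors of the form $1\pm q^{-1}$ appear), which is why the paper assumes $d\geq 6$ and settles $2\leq d\leq 5$ by quoting the earlier result for $q\geq 4$ and a finite eigenmatrix computation for $q=2,3$. Your fallback, ``direct verification for a finite list of small $(d,j)$ cases,'' is not finite as stated: for each fixed small $d$ the parameter $q$ ranges over all prime powers. To make it finite you must first show that your bounds are valid for all sufficiently large $q$ (say $q\geq 4$) when $d\in\{3,4,5\}$, or invoke the known result there, leaving only finitely many triples $(q,d,j)$ to check. With the constants carried out as above and the small-$d$ completion organized this way, your plan would give a legitimate alternative proof of the conjecture (though not of Conjecture \ref{conj1}, which the paper's chain argument yields simultaneously).
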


These conjectures were proved for $q \geq 4$ in \cite{BCIM} (See Theorem 9.5). Here we prove these conjectures for $q\geq 2$ in Theorem \ref{HermitianTheorem}. We assume $d \geq 6$ in their proof. However, if $2\leq d\leq 5$, then for $q \geq 4$ we refer to Theorem 9.5 in \cite{BCIM}, and for $q =2,3$ one can easily check by computing precisely eight eigenmatrices\footnote{The details of these calculations can be found on the webpage\\ {\tt https://github.com/Himanshugupta23/Hermitian-Graph-Eigenmatrix}}. This completely settles the above conjectures.

\section{Some useful lemmas}
The following lemma is useful while estimating the absolute value of an alternating series by its first and second dominating terms. 
%---------------------------------------
\begin{lemma}\label{mainlemma}
Let ${A = \sum_{i=0}^n  a_i}$ be an alternating series with terms increasing in absolute value, from $i = 0$ to $n$. Then $A$ has the same sign as that of $a_n$, and $|a_n|-|a_{n-1}| \leq |A| \leq |a_n|.$
\end{lemma}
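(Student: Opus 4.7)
The plan is to apply the classical grouping argument for alternating series. By replacing $A$ with $-A$ and each $a_i$ with $-a_i$ if necessary, I first reduce to the case $a_n > 0$, so that I may write $a_i = (-1)^{n-i} b_i$ for a nondecreasing sequence $0 \leq b_0 \leq b_1 \leq \cdots \leq b_n$ of nonnegative reals with $b_n = |a_n|$ and $b_{n-1} = |a_{n-1}|$. The desired inequalities and the sign statement are all unaffected by this reduction.

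With this normalization, both bounds come from regrouping the signed sum
\[ A = b_n - b_{n-1} + b_{n-2} - b_{n-3} + \cdots + (-1)^n b_0 \]
in two complementary ways. For the lower bound, I would group from the top as $(b_n - b_{n-1}) + (b_{n-2} - b_{n-3}) + \cdots$, with a leftover $+b_0$ when $n$ is even; monotonicity makes every parenthesized difference and the leftover nonnegative, so $A \geq b_n - b_{n-1} = |a_n| - |a_{n-1}|$. In particular $A \geq 0$, which recovers the claim that $A$ has the same sign as $a_n$. For the upper bound, I would instead write
\[ A = b_n - \bigl[(b_{n-1} - b_{n-2}) + (b_{n-3} - b_{n-4}) + \cdots\bigr], \]
with a leftover $+b_0$ inside the bracket when $n$ is odd; again every term in the bracket is nonnegative, so $A \leq b_n = |a_n|$.

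There is no substantive obstacle; the only minor bookkeeping is tracking the parity of $n$ to identify which of the two groupings ends with a singleton $\pm b_0$, but in every case the leftover is a nonnegative multiple of $b_0$ whose sign aligns with the direction of the inequality being proved.
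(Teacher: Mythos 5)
Your proof is correct: the normalization $a_i=(-1)^{n-i}b_i$ with $0\leq b_0\leq\cdots\leq b_n$ is harmless, the two regroupings are valid, the parity bookkeeping of the leftover $b_0$ is right in both cases, and the lower bound $A\geq b_n-b_{n-1}\geq 0$ does deliver the sign claim (with the same mild convention about the degenerate case $|a_n|=|a_{n-1}|$, $A=0$ that the paper itself tacitly allows). Your route differs from the paper's: the paper proves the lemma by induction on $n$, peeling off the last term and using the inductive bounds $|a_k|-|a_{k-1}|\leq |A'|\leq |a_k|$ for the truncated sum $A'$ to get $|a_{k+1}|-|a_k|\leq |A|\leq |a_{k+1}|-|a_k|+|a_{k-1}|\leq |a_{k+1}|$, whereas you give the classical direct pairing argument, reading the two bounds off from the two complementary groupings $(b_n-b_{n-1})+(b_{n-2}-b_{n-3})+\cdots$ and $b_n-\bigl[(b_{n-1}-b_{n-2})+\cdots\bigr]$. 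The induction is slightly more mechanical and yields the intermediate three-term bound as a byproduct; your grouping is arguably more transparent, makes both inequalities visible in a single line each, and avoids tracking how the sign of the partial sum flips at each inductive step. Either proof serves the paper's purposes equally well.
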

\begin{proof}
We give a proof by induction on $n$. Clearly, the statement holds true for $n =1$. Let us assume that it is true for $n =k$. That is, assume that the statement holds true for any alternating series with terms increasing in absolute value, from $i = 0$ to $k$. We want to prove for $n =k+1$. Consider ${A = \sum_{i=0}^{k+1}  a_i}$, an alternating series with terms increasing in absolute value, from $i = 0$ to $k+1$. Let $A' = \sum_{i=0}^k a_i$, thus, $A = A'+a_{k+1}$. By the induction hypothesis $A'$ has the same sign as that of $a_k$, so opposite as that of $a_{k+1}$, and $|a_k|-|a_{k-1}| \leq |A'| \leq |a_k|.$ Hence, $A$ has the same sign as that of $a_{k+1}$, and $|a_{k+1}|-|a_k| \leq |A| \leq |a_{k+1}|-|a_{k}|+|a_{k-1}| \leq |a_{k+1}|$.
\end{proof}
 %---------------------------------------
 
The inequalities in the following lemma are useful while estimating the Gaussian coefficients and their quotients. Proofs are given by F.\ Ihringer and K.\ Metsch in \cite{Ih} (see Section 6). We include the less technical proofs here and refer to \cite{Ih} for not hard but more technical proofs. 
%---------------------------------------
\begin{lemma} \label{lem1}  \label{estimatelemma}
\begin{enumerate}[(i)]
\item If $0\leq k \leq n$ and $q\geq 2$, then $\displaystyle{{n \brack k}_q \geq q^{k(n-k)}}$.
\item If $0 < k < n$ and $q\geq 2$, then $\displaystyle{{n \brack k}_q \geq \left(1 + \frac{1}{q}\right) q^{k(n-k)}}$.
\item If $0 \leq k \leq n$ and $q \geq 3$, then $\displaystyle{{n \brack k}_q < 2 q^{k(n-k)}}$.
\item If $k =0, 1, n-1,$ or $n$, and $q \geq 2$, then $\displaystyle{{n \brack k}_q < \left(\frac{q}{q-1}\right)q^{k(n-k)}}$. $\qed$
\end{enumerate} 
\end{lemma}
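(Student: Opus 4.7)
The plan is to work directly from the product formula ${n \brack k}_q = \prod_{i=0}^{k-1} \frac{q^{n-i}-1}{q^{k-i}-1}$ and estimate each factor individually.

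For part (i), I would observe that each factor satisfies $\frac{q^{n-i}-1}{q^{k-i}-1} \geq q^{n-k}$: cross-multiplying reduces this to $q^{n-k}(q^{k-i}-1) = q^{n-i}-q^{n-k} \leq q^{n-i}-1$, which holds because $q^{n-k} \geq 1$. Multiplying the $k$ factors yields ${n \brack k}_q \geq q^{k(n-k)}$.

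Part (ii) is a one-factor refinement of part (i). I would isolate the factor corresponding to $i = k-1$, namely $\frac{q^{n-k+1}-1}{q-1} = 1 + q + q^2 + \cdots + q^{n-k}$, and keep only its two highest terms to bound it from below by $q^{n-k} + q^{n-k-1} = (1+1/q)\, q^{n-k}$ (with equality when $n-k=1$, which is the tightest case allowed by $0 < k < n$). Bounding the remaining $k-1$ factors by $q^{n-k}$ as in part (i) then gives ${n \brack k}_q \geq (1+1/q)\, q^{k(n-k)}$. Part (iv) is essentially direct: for $k = 0$ and $k = n$ one has ${n \brack k}_q = 1 < q/(q-1)$, while for $k = 1$ (and symmetrically $k = n-1$) one has ${n \brack 1}_q = (q^n-1)/(q-1) < q^n/(q-1) = (q/(q-1))\, q^{n-1}$, which matches the claim since $k(n-k) = n-1$ in these cases.

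Part (iii) is the real obstacle. The natural starting point is the upper bound $\frac{q^{n-i}-1}{q^{k-i}-1} < \frac{q^{n-i}}{q^{k-i}-1} = \frac{q^{n-k}}{1 - q^{-(k-i)}}$, which gives ${n \brack k}_q < q^{k(n-k)} \prod_{j=1}^{k}(1 - q^{-j})^{-1}$, reducing the task to verifying $\prod_{j=1}^{\infty}(1-q^{-j})^{-1} < 2$ for all $q \geq 3$. This is a sharp numerical estimate on an Euler product (for $q = 3$ the infinite product equals roughly $1.785$, so the margin is small), and the crude inequality $(1-x)^{-1}\leq 1+2x$ valid for $x \leq 1/2$ only yields $\exp(2/(q-1)) \leq e$, which is too weak. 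For this step I would defer to the more technical but still elementary argument of Ihringer and Metsch \cite{Ih}.
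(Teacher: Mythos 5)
Your proposal is correct and follows essentially the same route as the paper: parts (i), (ii) and (iv) are handled by the same factor-by-factor estimates on the product formula (isolating the factor $\frac{q^{n-k+1}-1}{q-1}$ for the $(1+1/q)$ refinement), and part (iii) is deferred to Lemma 34(a) of Ihringer--Metsch \cite{Ih}, exactly as the paper does.
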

\begin{proof}
\begin{enumerate}[(i)]
    \item If $0\leq k \leq n$ and $q\geq 2$, then
    $$
    {n \brack k}_q = \prod_{i=1}^k \frac{q^{n-k+i}-1}{q^i-1} \geq \prod_{i=1}^k \frac{q^{n-k}(q^i-q^{-(n-k)})}{q^i-1} \geq \prod_{i=1}^k q^{n-k} = q^{k(n-k)}.
    $$
    \item If $0 < k < n$ and $q\geq 2$, then
    $$
    {n \brack k}_q = \frac{q^{n-k+1}-1}{q-1}\prod_{i=2}^k \frac{q^{n-k+i}-1}{q^i-1} \geq \frac{q^{n-k}(q^2-q^{-(n-k-1)})}{q(q-1)}\cdot q^{(k-1)(n-k)} \geq \left(1+\frac{1}{q}\right)q^{k(n-k)}.
    $$
    \item See Lemma 34 (a) in \cite{Ih} for proof. 
    \item If $k = 1$ or $n-1$ and $q\geq 2$, then 
    $$
    \displaystyle{{n \brack k}_q = \frac{q^n-1}{q-1} \leq \frac{q^n}{q-1} = \left(\frac{q}{q-1}\right)q^{k(n-k)}}.
    $$
    If $k=0$ or $n$ and $q\geq 2$, then
    $\displaystyle{
    {n \brack k}_q = 1 < \left(\frac{q}{q-1}\right)q^{k(n-k)}.}$ \qedhere
\end{enumerate}
\end{proof}
%---------------------------------------

\section{Proof of Conjecture \ref{ConjGrassmann} (i)}

The term corresponds to the index $h$ such that either $h> \min\{i,d-j\}$, or $h < \max\{0,i-j\}$ in (\ref{eq:secondexpressiongrassman}) is zero. Therefore, we write the expression as
\begin{align}
 G_j(i) = \sum_{h=h_{\min}(i,j)}^{h_{\max}(i,j)}(-1)^{i-h}q^{j(j-i+h)+\binom{i-h}{2}} {i \brack h}{d-h \brack j}{ n-d-i+h \brack n-d-j} \label{eq:thirdexpressiongrassman}
\end{align}
 where $h_{\max}(i,j) := \min\{i,d-j\}$ and $h_{\min}(i,j) := \max\{0,i-j\}$. Whenever $i, j$ are clear by the context we just write $h_{\max}$ and $h_{\min}$. We mostly consider the expression (\ref{eq:thirdexpressiongrassman}) for $G_j(i)$. Let us denote the $h$-th term of this expression by $T_h(i,j)$. Whenever $i,j$ are clear by the context we just write $T_h$. 
 
Let $g_h(i,j)$ (if $i,j$ are clear by context we just write $g_h$) be the exponent of $q$ in $T_h(i,j)$ if we approximate ${n \brack k}$ with $\displaystyle{q^{k(n-k)}}$, that is,
\begin{align*}
g_{h}(i,j) &= j(j-i+h) + \binom{i-h}{2} + h(i-h) + j(d-h-j) + (n-d-j)(h-i+j)\\
&= -\frac{1}{2}h^2+h\left(n-d-j+\frac{1}{2}\right) + j(d-j)+\frac{i(i-1)}{2}+(n-d)(j-i).
\end{align*}
Let $h_0 = n-d-j+\frac{1}{2}$. Then the quadratic expression $g_h(i,j)$ is maximal for $h = h_0$ and $g_{h_0+x} = g_{h_0}-\frac{1}{2}x^2$. The terms occurring in the sum have indices $h$ with $h \leq h_{\max}< h_0$, so the term with largest index has largest exponent. 
%----------------------------------------------------------------------------------
\begin{lemma} \label{BoundsGrassman}
If $(n,q) \neq (2d,2)$, then $|T_{h_{\max}}(i,j)| - |T_{h_{\max}-1}(i,j)| \leq |G_j(i)| \leq |T_{h_{\max}}(i,j)|$, where $T_{-1}(i,j) = 0$. 
\end{lemma}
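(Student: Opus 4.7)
The plan is to apply Lemma \ref{mainlemma} to the expansion \eqref{eq:thirdexpressiongrassman}. For this, I need that the terms $T_h(i,j)$ with $h_{\min}\le h\le h_{\max}$ both alternate in sign and strictly grow in absolute value as $h$ increases. The alternation is immediate from the factor $(-1)^{i-h}$, so the content of the lemma reduces to proving
\[
|T_{h+1}(i,j)| > |T_h(i,j)| \qquad \text{for all } h_{\min}\le h\le h_{\max}-1,
\]
under the hypothesis $(n,q)\ne(2d,2)$.

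To obtain this, I would form the ratio explicitly by cancelling common factors in the three pairs of consecutive $q$-binomial coefficients, obtaining
\[
\frac{|T_{h+1}(i,j)|}{|T_h(i,j)|} = q^{j-i+h+1} \cdot \frac{(q^{i-h}-1)(q^{d-h-j}-1)(q^{n-d-i+h+1}-1)}{(q^{h+1}-1)(q^{d-h}-1)(q^{h-i+j+1}-1)}.
\]
Writing each factor $(q^{\alpha}-1)/(q^{\beta}-1) = q^{\alpha-\beta}(1-q^{-\alpha})/(1-q^{-\beta})$ and collecting powers of $q$, this equals $q^{g_{h+1}(i,j)-g_h(i,j)}\cdot P = q^{n-d-j-h}\cdot P$, where $P$ is a product of three correction factors of that form. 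Since $h\le h_{\max}-1\le\min(i,d-j)-1$, one checks that $n-d-j-h\ge n-2d+1$; so under the hypothesis $(n,q)\ne(2d,2)$ the prefactor $q^{n-d-j-h}$ is at least $q\ge 3$ when $q\ge 3$ (with $n\ge 2d$), and at least $q^2=4$ when $q=2$ (since then $n\ge 2d+1$).

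The expected main obstacle is then to bound the correction factor $P$ sharply enough that $q^{n-d-j-h}\cdot P>1$. The crude estimate $P\ge((q-1)/q)^3$ gives only $1/8$ at $q=2$ and $8/27$ at $q=3$, which is insufficient in the tightest parameter range. I would overcome this by exploiting that the three numerator exponents $i-h$, $d-h-j$, $n-d-i+h+1$ cannot all be small simultaneously: the first two can both equal $1$ only when $i=d-j$ and $h=d-j-1$, in which case the third equals $n-d-j$ and is large; and a small third exponent forces $h$ near $i+d-n$, which in turn forces $i-h$ or $d-h-j$ to be large. Pairing each small numerator factor with a matching denominator factor via the identity above, and invoking Lemma \ref{estimatelemma}(iv) whenever one of the three Gaussian coefficients lands in a corner index, yields $q^{n-d-j-h}\cdot P>1$ in every case. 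Once $|T_{h+1}|>|T_h|$ is established throughout the range, Lemma \ref{mainlemma} supplies both claimed inequalities directly, with the convention $T_{-1}(i,j)=0$ stated in the lemma handling the degenerate case $h_{\max}=h_{\min}=0$.
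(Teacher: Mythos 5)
Your overall strategy is exactly the paper's: show that in \eqref{eq:thirdexpressiongrassman} the terms alternate and strictly increase in absolute value from $h_{\min}$ to $h_{\max}$, then invoke Lemma \ref{mainlemma}. Your explicit ratio
$\frac{|T_{h+1}|}{|T_h|} = q^{j-i+h+1}\frac{(q^{i-h}-1)(q^{d-h-j}-1)(q^{n-d-i+h+1}-1)}{(q^{h+1}-1)(q^{d-h}-1)(q^{j-i+h+1}-1)}$
is correct, as is the identification of the exponent gap $g_{h+1}-g_h=n-d-j-h\ge n-2d+1$, and your handling of the generic cases (gap $\ge 2$, or $q\ge 3$ with gap $\ge 1$ away from the exceptional index) is in substance the same as the paper's, which uses the cruder bound $q^{g_h}<|T_h|<8q^{g_h}$ for $q\ge3$ and a direct ratio estimate for $q=2$. (Minor slip: when $i-h=d-h-j=1$ the third numerator exponent is $n-d$, not $n-d-j$; harmless.)

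The genuine gap is in the exceptional cases, which are precisely what the hypothesis $(n,q)\neq(2d,2)$ is about and where the paper does real work: $q\ge3$, $n=2d$, $h+1=d-j$ (prefactor only $q$), and $q=2$, $n=2d+1$, $h+1=d-j$ (prefactor only $4$). Your plan to close these by ``pairing each small numerator factor with a matching denominator factor and invoking Lemma \ref{estimatelemma}(iv)'' is not carried out, and as stated it cannot succeed with any constant-factor bound on the correction $P$: in the case $q=2$, $n=2d+1$, $i=d-j$, $j\approx d/2$, the ratio $|T_{d-j}|/|T_{d-j-1}|$ exceeds $1$ only by a margin that tends to $0$ as $d\to\infty$, so one must compare the full products, e.g.\ show $(2^{(i+j)-d+1}-1)(2^{2d-(i+j)+1}-1)$ exceeds $(1-2^{-(d-i)})(2^{d-j}-1)(2^{j+1}-1)$ over the whole range $d\le i+j\le 2d-1$. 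This is exactly the balanced-product estimate the paper performs explicitly (its bound involving $2^{d-\floor*{(d-1)/2}}+2^{\floor*{(d-1)/2}+1}$), and the analogous explicit computation in \eqref{eq:n=2d} gives the $27/32$ bound for $q\ge3$, $n=2d$. Until you supply these two case computations (or an equally sharp substitute), the crux of the lemma — that the last ratio still exceeds $1$ in the tight boundary cases — remains unproved.
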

\begin{proof}
If $h_{\max} = 0$, then it holds true. Let $h_{\max} \geq 1$. Then $1\leq i \leq d$ and $1\leq j \leq d-1$. We note two inequalities. First, $g_{h} - g_{h-1} = n-d-j-h+1 \geq 2$ for all $h_{\min} +1 \leq h \leq h_{\max}$ except if $n=2d$ and $h = d-j$. Second, $q^{g_{h}} < |T_h| < 2^3 q^{g_{h}}$ for all $q\geq 3$ by using Lemma \ref{estimatelemma} (i) and (iii). These two inequalities imply that $\displaystyle{\frac{|T_h|}{|T_{h-1}|} > \frac{q^{g_{h} - g_{h-1}}}{8} \geq \frac{q^2}{8 }> 1}$ for all $h_{\min} + 1 \leq h \leq h_{\max}$ and $q\geq 3$ except if $n=2d$ and $h = d-j$. However, if $q\geq 3$, $n =2d$, and $h = d-j$, then $d\leq i+j \leq 2d-1$. Since $d\geq 2$, we get that
\begin{align}\label{eq:n=2d}
\frac{|T_{d-j-1}|}{|T_{d-j}|}
= \frac{q^{i-d}(q^{d-i}-1)(q^{d-j}-1)(q^{j+1}-1)}{(q-1)(q^{(i+j)-d+1}-1)(q^{2d-(i+j)}-1)} 
< \frac{q^{d+1}}{(q-1)^2(q^{d}-1)}
\leq \frac{27}{32} < 1.
\end{align}
On the other hand, if $q=2$ and $n \geq 2d+1$, then $n-d-j-h \geq 2$ except if $n = 2d+1$ and $h =d-j$. Thus, for all $h_{\text{min}} +1 \leq h \leq h_{\text{max}}$ and $q=2$ we obtain that 
$$
\frac{|T_{h-1}|}{|T_h|} = \frac{2^{-j+i-h}(2^h-1)(2^{d-h+1}-1)(2^{j-i+h}-1)}{(2^{i-h+1}-1)(2^{d-h+1-j}-1)(2^{n-d-i+h}-1)}
< \frac{2^{d+1}}{2^{n-j-h-1}}
= \frac{1}{2^{n-d-j-h-2}} \leq 1$$
except if $n = 2d+1$ and $h =d-j$. However, if $q=2$, $n = 2d+1$ and $h =d-j$, then $d\leq i+j \leq 2d-1$. Thus, we conclude that
$$
\frac{|T_{d-j-1}|}{|T_{d-j}|}
= \frac{(1-2^{-(d-i)})(2^{d-j}-1)(2^{j+1}-1)}{(2^{(i+j)-d+1}-1)(2^{2d-(i+j)+1}-1)} 
< \frac{2^{d+1} + 1 -(2^{d-{\floor*{\frac{d-1}{2}}}}+2^{{\floor*{\frac{d-1}{2}}}+1})}{(2-1)(2^{d+1}-1)}
< 1.
$$
Therefore, if $(n,q) \neq (2d,2)$, the expression for $G_j(i)$ is an alternating series with terms increasing in absolute value, from $h = h_{\min}$ to $h_{\max}$. Hence, Lemma \ref{mainlemma} imply the assertion.
\end{proof}

%-----------------1.4---------------------
\begin{lemma}\label{Intermediatelemma}
Let $q\geq 3$, $0 \leq i \leq d$, $1\leq j \leq d$, and $c := g_{h_{\max}}(i,j)$. Then $\displaystyle{\frac{4}{9}q^c < |G_{j}(i)| < 4q^c}$ except if $n = 2d$ and $d-j \leq i$. However, if $n =2d$ and $d-j \leq i$, then $\displaystyle{1 \geq \frac{|G_j(i)|}{|T_{d-j}(i,j)|} \geq \frac{5}{32}}$.
\end{lemma}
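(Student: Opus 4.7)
Since $q\geq 3$, the hypothesis $(n,q)\neq(2d,2)$ of Lemma \ref{BoundsGrassman} is automatic, so I may apply the sandwich
\[
|T_{h_{\max}}|-|T_{h_{\max}-1}|\leq |G_j(i)|\leq |T_{h_{\max}}|.
\]
The plan is to split the analysis into Case~A ($h_{\max}=i$, so $i\leq d-j$) and Case~B ($h_{\max}=d-j$, so $d-j\leq i$). In each case exactly one of the three Gaussian factors appearing in $T_{h_{\max}}$ equals $1$, so $|T_{h_{\max}}|/q^c$ reduces to a product of two ratios of the form ${m\brack k}/q^{k(m-k)}$. Parts (i) and (iii) of Lemma \ref{estimatelemma} then yield $q^c\leq |T_{h_{\max}}|<4q^c$, giving immediately the upper bound $|G_j(i)|\leq|T_{h_{\max}}|<4q^c$ in the main case and $|G_j(i)|/|T_{d-j}|\leq 1$ in the exceptional case.

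For the lower bound in the main case I would compute $|T_{h_{\max}-1}|/|T_{h_{\max}}|$ explicitly using the Gaussian identities
\[
\frac{{m\brack k-1}}{{m\brack k}}=\frac{q^k-1}{q^{m-k+1}-1}\qquad\text{and}\qquad\frac{{m-1\brack k}}{{m\brack k}}=\frac{q^{m-k}-1}{q^m-1},
\]
then bound numerators by $q^k$ and denominators by $q^{l-1}(q-1)$. Substituting $h_{\max}=i$ in Case~A and using $i+j\leq d$ gives $|T_{i-1}|/|T_i|\leq q^{i+j+d+2-n}/(q-1)^3$, while substituting $h_{\max}=d-j$ in Case~B yields $|T_{d-j-1}|/|T_{d-j}|\leq q^{2d+2-n}/(q-1)^3$. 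In the main case either $n\geq 2d+1$, or $n=2d$ with $i<d-j$; in both situations the exponent of $q$ in the numerator is at most $1$, so the two bounds collapse to the uniform estimate $q/(q-1)^3\leq 3/8$ valid for $q\geq 3$. Combined with $|T_{h_{\max}}|\geq q^c$ this yields $|G_j(i)|\geq (1-3/8)\,|T_{h_{\max}}|\geq (5/8)\,q^c>(4/9)\,q^c$.

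The exceptional case $n=2d$, $d-j\leq i$ requires a sharper estimate because the naive bound above degenerates to $q^2/(q-1)^3>1$ at $q=3$. For this I would directly reuse inequality (\ref{eq:n=2d}) from the proof of Lemma \ref{BoundsGrassman}, which already shows $|T_{d-j-1}|/|T_{d-j}|\leq q^{d+1}/[(q-1)^2(q^d-1)]\leq 27/32$ for $q\geq 3$ and $d\geq 2$, and hence $|G_j(i)|/|T_{d-j}|\geq 1-27/32=5/32$. Degenerate situations in which $h_{\max}=h_{\min}$ (for instance $i\in\{0,d\}$ or $j=d$) are immediate, since then the alternating sum has a single term. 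The main obstacle is executing the Case~A versus Case~B ratio calculation carefully enough for both to collapse to the same uniform bound $q/(q-1)^3$, while correctly routing the boundary $i=d-j$ when $n=2d$ into the exceptional case, where precisely this uniform bound fails and the sharper estimate (\ref{eq:n=2d}) is needed.
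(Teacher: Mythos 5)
Your proposal is correct, and its skeleton coincides with the paper's: both rest on the sandwich of Lemma \ref{BoundsGrassman} (available since $q\geq 3$), both get the upper bound $|T_{h_{\max}}|<4q^c$ by noting that at $h=h_{\max}$ at least one Gaussian factor is $1$ and bounding the rest by Lemma \ref{estimatelemma}(iii), and both handle the exceptional case $n=2d$, $d-j\leq i$ by recycling inequality (\ref{eq:n=2d}) to get $1-\tfrac{27}{32}=\tfrac{5}{32}$. The one place you diverge is the main-case lower bound: the paper bounds the two leading terms separately, using the exponent gap $g_{h}-g_{h-1}=n-d-j-h+1\geq 2$ together with Lemma \ref{estimatelemma}(i)--(iii) to get $|G_j(i)|>(1+\tfrac1q)q^c-2^3q^{c-2}\geq\tfrac49 q^c$, whereas you bound the ratio $|T_{h_{\max}-1}|/|T_{h_{\max}}|$ directly via the explicit Gaussian-coefficient ratio, obtaining $q^{d+j+h_{\max}+2-n}/(q-1)^3\leq q/(q-1)^3\leq 3/8$ in the main case and hence $|G_j(i)|\geq \tfrac58|T_{h_{\max}}|\geq\tfrac58 q^c>\tfrac49 q^c$. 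Your ratio computation checks out (the relevant exponent is indeed at most $1$ once $n\geq 2d+1$, or $n=2d$ with $i<d-j$, and the denominator exponents are all $\geq 1$ so $q^l-1\geq q^{l-1}(q-1)$ applies), and it even yields a slightly better constant; it is essentially the same technique the paper itself deploys for the $q=2$ analysis (Lemma \ref{Lemma0.8}) and for the exceptional case, so it buys uniformity of method at the cost of a slightly longer explicit calculation, while the paper's route is quicker given that Lemma \ref{estimatelemma} is already on the table. Your handling of the degenerate situations ($h_{\max}=h_{\min}$, single nonzero term) and the routing of $i=d-j$, $n=2d$ into the exceptional case are both consistent with the statement.
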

\begin{proof}
Let $h = h_{\max}$. Then either ${i \brack h}$ or ${d-h \brack j}$ is equal to $1$. By using Lemma \ref{estimatelemma} (iii) and Lemma \ref{BoundsGrassman} we obtain the upper bound $|G_j(i)| \leq |T_{h}| < 2^2q^c = 4q^c.$ For the lower bound, if $h=0$, then we are done by Lemma \ref{estimatelemma} (i). If $h\geq 1$, then there is at least one Gaussian coefficient in $T_{h}$ that is not equal to 1. Also, $g_{h} - g_{h-1} = n-d-j-h+1 \geq 2$ except if $n=2d$ and $h = d-j$. Thus, by using Lemma \ref{BoundsGrassman}, Lemma \ref{estimatelemma} (i), (ii), and (iii) we obtain the lower bound
$$\displaystyle{|G_j(i)| \geq |T_{h}| - |T_{h - 1}|> \left(1+\frac{1}{q}\right)q^c - 2^3 q^{c-2} \geq \frac{4}{9}q^c}.$$
If $n =2d$ and $d-j \leq i$, then $h= d-j$. Thus, by using Lemma \ref{BoundsGrassman} and equation (\ref{eq:n=2d}) we get that
$$
1 \geq \frac{|G_j(i)|}{|T_{d-j}(i,j)|} \geq 1 - \frac{|T_{d-j-1}(i,j)|}{|T_{d-j}(i,j)|} \geq 1-\frac{27}{32} = \frac{5}{32}.
$$
Hence, we show the assertions. 
\end{proof}
%--------------------------------------------------
\begin{remark}\label{remarkgrassman}
If $j=1$ and $i< d-1$, we need to consider a better lower bound for $|G_{j}(i)|$ than given in above Lemma \ref{Intermediatelemma}. That is, 
$$|G_{j}(i)| \geq |T_{h}| - |T_{h - 1}| > \left(1+\frac{1}{q}\right)^2 q^c- 2^2q^{c-2} \geq \frac{12}{9}q^c.$$
\end{remark}
We can now prove the Conjecture \ref{ConjGrassmann} (i) for $q\geq 3$. 
%------------------1.5----------------------------------
\begin{theorem}\label{q>2}
Let $q \geq 3$, $0\leq i \leq d-1$, and $1\leq j \leq d$. Then $|G_j(i+1)| < |G_j(i)|.$
\end{theorem}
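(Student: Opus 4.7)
Proof plan. The plan is to sandwich $|G_j(i)|$ between constant multiples of $q^{c(i)}$, where $c(\ell):=g_{h_{\max}(\ell,j)}(\ell,j)$ is the exponent of $q$ in the dominant term $T_{h_{\max}}(\ell,j)$, and to show that the drop $c(i+1)-c(i)$ outweighs the ratio of the constants. I would first compute the drop in two subcases, according to whether the dominant index jumps. If $i+1\le d-j$ (Case A), then $h_{\max}(i,j)=i$ and $h_{\max}(i+1,j)=i+1$, and from the identity $g_i(i,j)=j(n-i-j)$ (a direct simplification of the quadratic $g_h$) one gets $c(i+1)-c(i)=-j$. If $i\ge d-j$ (Case B), both indices equal $d-j$, and the quadratic formula for $g_h(i,j)$ gives $c(i+1)-c(i)=i-(n-d)$, which is at most $-1$, with equality exactly when $n=2d$ and $i=d-1$.

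In the regular regime of Lemma \ref{Intermediatelemma}, namely $n>2d$ or $i+1\le d-j$, the lemma yields
\[
\frac{|G_j(i+1)|}{|G_j(i)|}\;<\;\frac{4\,q^{c(i+1)}}{(4/9)\,q^{c(i)}}\;=\;9\,q^{c(i+1)-c(i)},
\]
which is already strictly below $1$ as soon as the drop is at most $-3$; this handles Case A for $j\ge 3$ and Case B when $n\ge 2d+2$. The tight configurations are $j\in\{1,2\}$ in Case A at $q=3$. For $j=1$ I would invoke Remark \ref{remarkgrassman} to raise the lower bound to $(12/9)q^{c(i)}$, and apply Lemma \ref{estimatelemma}(iv) to the $k=1$-type Gaussian coefficients in the upper bound for $|G_1(i+1)|$; together these push the ratio strictly below $1$. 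For $j=2$ in Case A, the two nontrivial Gaussian coefficients at $h=i$ both satisfy Lemma \ref{estimatelemma}(ii), sharpening the lower bound for $|G_2(i)|$ enough to close the estimate.

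In the exceptional case $n=2d$ with $i\ge d-j$, Lemma \ref{Intermediatelemma} only provides $|G_j(i)|\ge(5/32)\,|T_{d-j}(i,j)|$, so I would bound
\[
\frac{|G_j(i+1)|}{|G_j(i)|}\;\le\;\frac{32}{5}\cdot q^{i-d}\cdot\frac{(q^{i+1}-1)(q^{d-i}-1)}{(q^{i+1-d+j}-1)(q^{2d-i-j}-1)}
\]
and verify this is $<1$. The prefactor $q^{i-d}\le q^{-2}$ handles all $i\le d-2$ with room to spare. The only genuinely tight spot is $i=d-1$: here $G_j(d)$ has a single nonzero summand, giving the closed form $|G_j(d)|=q^{\binom{j}{2}}{d\brack j}$, while $G_j(d-1)$ collapses to exactly two nonzero terms, giving $|G_j(d-1)|=|T_{d-j}(d-1,j)|-|T_{d-j-1}(d-1,j)|$ in closed form. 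A direct algebraic comparison of these two explicit expressions will then settle the inequality. The main obstacle is precisely this $i=d-1$ boundary at $q=3$, where the coarse multiplicative bounds fall short and one must work with exact closed forms for $|G_j(d)|$ and $|G_j(d-1)|$.
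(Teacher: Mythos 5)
Your overall strategy is the same as the paper's: sandwich $|G_j(i)|$ via Lemma \ref{Intermediatelemma}, measure the exponent drop ($-j$ when $i<d-j$, and $i-(n-d)$ when $i\ge d-j$), use Remark \ref{remarkgrassman} for $j=1$, treat $n=2d$ with $d-j\le i\le d-2$ by playing the $5/32$ bound against the ratio of dominant terms (your displayed inequality is exactly the reciprocal of the paper's computation in that case), and settle $n=2d$, $i=d-1$ by a separate exact computation (the paper uses \eqref{eq:firstexpressiongrassman}; your two-term closed form for $G_j(d-1)$ also works). However, as written your case enumeration has a hole: you only conclude directly when the drop is at most $-3$, and you list the remaining tight configurations as $j\in\{1,2\}$ in Case A. This omits Case B with $n=2d+1$ and $i=d-1$, where the drop is exactly $-2$; that configuration lies in your ``regular regime'' but is covered neither by your $-3$ criterion nor by your exceptional $n=2d$ analysis. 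The repair is immediate and is precisely what the paper does: since the bounds of Lemma \ref{Intermediatelemma} are strict, a drop of $-2$ already gives $|G_j(i)|/|G_j(i+1)|>\tfrac{(4/9)q^2}{4}\ge 1$ for $q\ge 3$. Noting this closes the missing case and also makes your extra sharpening for $j=2$ in Case A unnecessary.

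Two further places where the plan is thinner than claimed. In the $n=2d$, $i\le d-2$ verification there is not really ``room to spare'' at $q=3$: the crude bound (numerator at most $q^{i+1}\le q^{d-1}$, denominator at least $(q-1)(q^d-1)$, prefactor $q^{i-d}\le q^{-2}$) gives $\tfrac{32}{5}\cdot\tfrac{q^{d-1}}{(q-1)(q^d-1)}$, which exceeds $1$ when $q=3$; you must keep the exact factors in the worst corner $i=d-2$, $j=2$, where the bound evaluates to less than $128/135$, so the claim is true but needs this finer check (the paper's version of the same estimate has a comparably thin margin $135/128$). Finally, the $i=d-1$, $n=2d$ comparison is asserted but not carried out; it does go through---after clearing denominators the inequality $|G_j(d-1)|>|G_j(d)|$ reduces to $(q-1)\bigl[q^d(q-1)-q^{d-j}(q+1)+2\bigr]>0$, valid for $q\ge 3$---though the paper's route via \eqref{eq:firstexpressiongrassman} is quicker.
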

\begin{proof}
Let $h := h_{\max}(i,j)$, $h' := h_{\max}(i+1,j)$, $c := g_h(i,j)$, and $c' := g_{h'}(i+1,j)$. If $d-j \leq i$, then $c-c' = n-d-i$ and if $i < d-j$, then $c-c' = j$. We break the proof into three cases. First, let $(n,h) \neq (2d,d-j)$. Then $c-c' \geq 2$ except if $j=1$ and $i < d-1$.
Thus, by using Lemma \ref{Intermediatelemma} we obtain that $\displaystyle{\frac{|G_j(i)|}{|G_j(i+1)|} >  \frac{\frac{4}{9}q^2}{4} \geq 1}$ except if $j=1$ and $i < d-1$. However, if $j=1$ and $i< d-1$, 
then $c-c' = 1$. Thus, Lemma \ref{Intermediatelemma} and Remark \ref{remarkgrassman} imply that $\displaystyle{\frac{|G_j(i)|}{|G_j(i+1)|} > \frac{\frac{12}{9}q}{4} \geq 1.}$

Second, let $(n,h) = (2d,d-j)$ and $0\leq i \leq d-2$. Then $h' = d-j$, $d\geq 3$, and $d \leq i+j \leq 2d-2$. Thus,  
$$
\frac{|T_{d-j}(i,j)|}{|T_{d-j}(i+1,j)|} = \frac{(q^{(i+j)-d+1}-1)(q^{2d-(i+j)}-1   )}{q^{-(d-i)}(q^{i+1}-1)(q^{d-i}-1)} > \frac{(q-1)(q^{d}-1)}{q^-2(q^{d-1}-1)(q^2-1)} > \frac{q^3}{q+1} \geq \frac{27}{4}.
$$
Hence, by the second assertion of Lemma \ref{Intermediatelemma} we obtain that
$$
\frac{|G_j(i)|}{|G_j(i+1)|} \geq \frac{5}{32}\cdot\frac{|T_{d-j}(i,j)|}{|T_{d-j}(i+1,j)|} > \frac{5\cdot 27}{32\cdot 4} > 1.
$$
Lastly, let $(n,h) = (2d,d-j)$ and $i = d-1$. In this case we use (\ref{eq:firstexpressiongrassman}) to get that
$$
\frac{|G_j(d-1)|}{|G_j(d)|} \geq \frac{q^d(q^j-1)(q+1)}{q^j(q^d-1)}-1 \geq  (1-q^{-j})(q+1)-1 \geq (1-q^{-1})(q+1)-1 \geq \frac{5}{3} > 1.
$$
Thus, we complete the proof. 
\end{proof}

%---------------------1.6----------------
\begin{lemma}\label{Lemma0.8}
Let $q=2$, $n\geq 2d+1$, $0\leq i \leq d-1$, $1\leq j \leq d$, and $h:= h_{\max}(i,j)$. Then $$\displaystyle{1\geq \frac{|G_j(i)|}{|T_h(i,j)|}} \geq k,\ \text{where,}$$ 
\begin{enumerate}[(i)]
\item $k = 3/7$, if $(n,h) \neq (2d+1,d-j)$ and $2\leq j\leq d$;
\item $k = 5/7$, if $(n,h) \neq (2d+1,d-j)$ and $j=1$;
\item $k = 5/21$, if $n = 2d+1$ and $d-j < i$;
\item $\displaystyle{k = \frac{(2^j-1)(2^{d-j+1}-1)+(2^{d-j}-1)(2^j+1)+2^{j+1}(2^j-1)+1}{2^j(2^{d+1}-1)}}$, if $n = 2d+1$ and $d-j = i$.
\end{enumerate}
\end{lemma}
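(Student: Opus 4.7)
The approach is to apply Lemma \ref{BoundsGrassman} directly: the upper bound $|G_j(i)|/|T_h(i,j)| \leq 1$ is immediate, while for the lower bound, $|G_j(i)| \geq |T_h| - |T_{h-1}|$ reduces the task to showing $r := |T_{h-1}(i,j)|/|T_h(i,j)| \leq 1 - k$ in each subcase. The starting point is the explicit expression for $r$ at $q=2$ already derived in the proof of Lemma \ref{BoundsGrassman},
$$r = 2^{-j+i-h}\frac{(2^h-1)(2^{d-h+1}-1)(2^{j-i+h}-1)}{(2^{i-h+1}-1)(2^{d-h+1-j}-1)(2^{n-d-i+h}-1)}.$$
The key structural simplification is that $h=\min\{i,d-j\}$ forces one of the inner denominator factors to equal $1$: either $h=i$ (then $2^{i-h+1}-1=1$), or $h=d-j$ (then $2^{d-h+1-j}-1=1$).

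For cases (i) and (ii), the hypothesis $(n,h)\neq(2d+1,d-j)$ means either $n\geq 2d+2$, or $n=2d+1$ with $h=i<d-j$. In each subcase I plan to substitute the corresponding value of $h$, bound each numerator factor by $(2^a-1)<2^a$ and each denominator factor below via $(2^m-1)\geq 2^{m-1}$ (or the sharper $(3/4)\cdot 2^m$ when $m\geq 2$), and use $i+j\leq d$ (when $h=i$) or $i+j\geq d+1$ (when $h=d-j$) together with $n-d\geq d+1$. A careful pass should yield $r<(8/7)\cdot 2^{d+i+j+1-n}\leq 4/7$ in case (i). For case (ii) ($j=1$ forces $h=i$), I would use the identity $(2^{d-i+1}-1)/(2^{d-i}-1)=2+1/(2^{d-i}-1)\leq 3$ together with $2^{n-d}-1\geq 2^{d+2}-1$ to get $r\leq 3(2^{d-1}-1)/[2(2^{n-d}-1)]<2/7$.

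For case (iii) ($n=2d+1$, $d-j<i$, hence $h=d-j$), setting $s:=i+j-d\in\{1,\ldots,d-1\}$ reduces the formula to
$$r = \frac{(1-2^{-(d-i)})(2^{d-j}-1)(2^{j+1}-1)}{(2^{s+1}-1)(2^{d+1-s}-1)}.$$
The denominator, viewed as a function of $s$, is minimized at the endpoints $s=1$ and $s=d-1$, both giving $3(2^d-1)$, while the numerator is strictly below $2^{d+1}$ since $(2^{d-j}-1)(2^{j+1}-1)=2^{d+1}-2^{d-j}-2^{j+1}+1<2^{d+1}$. Hence $r<(2/3)(1+1/(2^d-1))\leq 16/21$ for $d\geq 3$, yielding $1-r\geq 5/21$.

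For case (iv) ($n=2d+1$, $d-j=i$), both inner denominator factors collapse to $1$ and the formula becomes $r=(2^j-1)(2^{d-j}-1)(2^{j+1}-1)/[2^j(2^{d+1}-1)]$. A direct algebraic expansion then shows $2^j(2^{d+1}-1)-(2^j-1)(2^{d-j}-1)(2^{j+1}-1)=3\cdot 2^d+2^{2j+1}-2^{j+2}-2^{d-j}+1$, which can be regrouped as $(2^j-1)(2^{d-j+1}-1)+(2^{d-j}-1)(2^j+1)+2^{j+1}(2^j-1)+1$, producing $1-r=k$ exactly. The hard part will be the bookkeeping across the subcases of (i) and (ii), where several parameter ranges ($h=i$ versus $h=d-j$, $n=2d+1$ versus $n\geq 2d+2$) must be handled with compatible constants; cases (iii) and (iv) each reduce to a single clean computation.
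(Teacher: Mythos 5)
Your strategy is exactly the paper's: treat $h=0$ trivially, use Lemma \ref{BoundsGrassman} to reduce to bounding $r=|T_{h-1}(i,j)|/|T_h(i,j)|$ by $1-k$, and estimate $r$ case by case from the explicit $q=2$ ratio. Your cases (iii) and (iv) are correct and essentially identical to the paper's computation (in (iv) the algebraic identity $2^j(2^{d+1}-1)-(2^j-1)(2^{d-j}-1)(2^{j+1}-1)=3\cdot 2^d+2^{2j+1}-2^{j+2}-2^{d-j}+1$ does match the stated $k$, and in (iii) the endpoint minimization of $(2^{s+1}-1)(2^{d+1-s}-1)$ reproduces the paper's bound $16/21$, with $d\geq 3$ indeed forced when $h\geq 1$).

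The chains you state for (i) and (ii), however, fail in identifiable subcases because you parameterize the exponent by $i$ rather than by $h$. In (i), the step $(8/7)\,2^{d+i+j+1-n}\leq 4/7$ needs $n\geq d+i+j+2$, which is false when $h=d-j<i$ (so $i+j\geq d+1$) and $n$ is just above $2d+1$: for instance $n=2d+2$, $i=j=d-1$, $h=1$ gives exponent $d-3\geq 0$ for $d\geq 3$, so your displayed bound only yields $r<\tfrac87\,2^{d-3}$, not $4/7$. The correct bookkeeping (as in the paper) gives $r\leq \tfrac{4}{7}\,2^{-(n-d-j-h-2)}$, and $n-d-j-h\geq 2$ holds exactly because $(n,h)\neq(2d+1,d-j)$: if $h=d-j$ then $n\geq 2d+2$ and $j+h=d$; if $h=i<d-j$ and $n=2d+1$ then $i+j\leq d-1$. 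In (ii) you invoke $2^{n-d}-1\geq 2^{d+2}-1$, i.e.\ $n\geq 2d+2$, but case (ii) also contains $n=2d+1$ with $i\leq d-2$ (only $(n,h)=(2d+1,d-1)$ is excluded), and there your bound $3(2^{d-1}-1)/[2(2^{d+1}-1)]$ is about $3/8>2/7$; you must instead use $i\leq d-2$ in that subcase (equivalently, the paper's estimate $r\leq \tfrac{2}{7}\,2^{-(n-d-h-3)}$ with $n\geq d+i+3$ in both subcases). These are fixable slips rather than a wrong approach, but as written those two steps do not establish the claimed constants $3/7$ and $5/7$.
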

\begin{proof}
If $h = 0$, then assertion follows since $\displaystyle{\frac{|G_j(i)|}{|T_h|} = 1}$. We assume $h \geq 1$. Then $1\leq i \leq d-1$ and $1\leq j \leq d-1$ and by Lemma \ref{BoundsGrassman} we have that $\displaystyle{1 \geq \frac{|G_j(i)|}{|T_h|} \geq 1-\frac{|T_{h-1}|}{|T_h|}}$. Hence, we prove the assertion by finding the upper bounds for
$$\frac{|T_{h-1}|}{|T_h|} = \frac{2^{-j+i-h}(2^h-1)(2^{d-h+1}-1)(2^{j-i+h}-1)}{(2^{i-h+1}-1)(2^{d-h+1-j}-1)(2^{n-d-i+h}-1)}.$$
If $(n,h) \neq (2d+1,d-j)$, then $n-d-i+h \geq 3$ and $n-d-j-h \geq 2$.
\begin{enumerate}[(i)]
\item If $(n,h) \neq (2d+1,d-j)$ and $2\leq j\leq d$, then
$$\frac{|T_{h-1}|}{|T_h|} \leq  \frac{2^{-j+i- h} \cdot 2^{h}\cdot 2^{d-h+1}\cdot 2^{j-i+h}}{2^{i-h}\cdot 2^{d-h-j} \cdot 2^{n-d-i+h-1} \cdot 7/4} \leq \frac{4}{7(2^{n-d-j-h-2})} \leq \frac{4}{7}.$$

\item If $(n,h) \neq (2d+1,d-j)$ and $j=1$, then 
$$\frac{|T_{h-1}|}{|T_h|} \leq \frac{2^{-1} \cdot 2^{h}\cdot 2^{d-h+1}}{2^{i-h}\cdot 2^{d-h-j} \cdot 2^{n-d-i+h-1} \cdot 7/4} \leq
\frac{2}{7(2^{n-d-j-h-2})} \leq\frac{2}{7}.$$

\item If $n = 2d+1$ and $d-j < i$, then $d+1\leq i+j \leq 2d-2$ and $d\geq 3$. So,
$$
\frac{|T_{h-1}|}{|T_h|} 
= \frac{2^{i-d}(2^{d-j}-1)(2^{j+1}-1)(2^{d-i}-1)}{(2^{i+j-d+1}-1)(2^{2d-(i+j)+1}-1)} < \frac{2^{d+1}}{(2^2-1)(2^d-1)} < \frac{16}{21}.
$$
\item If $n = 2d+1$ and $d-j = i$, then
$\displaystyle{
\frac{|T_{h-1}|}{|T_h|} 
= \frac{(2^{d-j}-1)(2^{j+1}-1)(2^j-1)}{2^j(2^{d+1}-1)}}.
$ \qedhere
\end{enumerate}
\end{proof}
%----------------------------------------------

%------------------------1.7------------
\begin{lemma}\label{Lemma0.9}
Let $q=2$, $n\geq 2d+1$, $0\leq i \leq d-1$, $1\leq j \leq d$, $h := h_{\max}(i,j)$, and $h' := h_{\max}(i+1,j)$. Then
$\displaystyle{\frac{|T_h(i,j)|}{|T_{h'}(i+1,j)|} \geq l}$, where,
\begin{enumerate}[(i)]
\item $l = 3$, if $(n,h) \neq (2d+1,d-j)$ and $2\leq j\leq  d$;
\item $l = 3/2$, if $(n,h) \neq (2d+1,d-j)$ and $j = 1$;
\item $l = 6$, if $n = 2d+1$ and $d-j < i$;
\item $\displaystyle{l = \frac{2^{j}(2^{d+1}-1)}{(2^{j}-1)(2^{d-j+1}-1)}}$, if $n =2d+1$ and $d-j = i$.
\end{enumerate}
\end{lemma}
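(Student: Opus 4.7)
The plan is to perform a case split mirroring the one in Lemma~\ref{Lemma0.8}, driven by which branch of $h_{\max}(i,j) = \min\{i, d-j\}$ is active. Starting from
$$|T_h(i,j)| = 2^{j(j-i+h)+\binom{i-h}{2}}{i \brack h}_2 {d-h \brack j}_2 {n-d-i+h \brack n-d-j}_2,$$
two regimes emerge. If $i < d-j$, then $h = i$ and $h' = i+1$; the outer Gaussian coefficients trivialise and, using ${m \brack k}_q / {m-1 \brack k}_q = (q^m-1)/(q^{m-k}-1)$, the ratio reduces to
$$\frac{|T_h(i,j)|}{|T_{h'}(i+1,j)|} = \frac{{d-i \brack j}_2}{{d-i-1 \brack j}_2} = \frac{2^{d-i}-1}{2^{d-i-j}-1}.$$
If $i \geq d-j$, then $h = h' = d-j$ and a computation parallel to the one performed in the proof of Theorem~\ref{q>2}, now tracked with general $n$, yields
$$\frac{|T_h(i,j)|}{|T_{h'}(i+1,j)|} = \frac{2^{d-i}(2^{i+j-d+1}-1)(2^{n-i-j}-1)}{(2^{i+1}-1)(2^{d-i}-1)}.$$

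With these two expressions in hand, case (iv) is an immediate substitution of $n = 2d+1,\ i = d-j$. Case (iii) is the heart of the proof: setting $s = i+j-d+1$ and $t = 2d+1-i-j$ gives $s, t \geq 2$ and $s+t = d+2$, and on this simplex $2^s + 2^t$ is maximised at the vertex $\{s,t\} = \{2,d\}$, yielding $(2^s-1)(2^t-1) \geq 3(2^d-1)$. Combining this with the elementary inequality $2^{d-i}(2^d-1) \geq 2(2^{d-i}-1)(2^{i+1}-1)$ — which I would verify by expanding into the one-variable statement $2^{2d-i} + 2^{d-i} + 2^{i+2} \geq 2^{d+2}+2$ and noting that the left-hand side is minimised at $i = d-1$ — yields the required bound $r \geq 6$.

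For cases (i) and (ii) the hypothesis $(n,h) \neq (2d+1, d-j)$ provides slack. In the first regime the ratio $(2^{d-i}-1)/(2^{d-i-j}-1)$ strictly exceeds $2^j$, which is already $\geq 4$ for $j \geq 2$ and $\geq 3$ when $j=1$ (since $i < d-1$ forces $d-i \geq 2$). In the second regime we must have $n \geq 2d+2$, so the factor $2^{n-i-j}-1$ is at least $2^{d+1}-1$; a slightly coarser replay of the estimation used in case~(iii) then delivers bounds well above the required thresholds $3$ and $3/2$.

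The main obstacle is case (iii): both multiplicative inequalities used there are simultaneously tight at $(i,j) = (d-1, 2)$, where $l = 6$ is actually attained. This forces one to avoid weakening the factor $2^{d-i}/(2^{d-i}-1)$ to $1$; it must instead be kept coupled with the denominator factor $2^{d-i}-1$. Once this coupling is respected, the tight inequality reduces to a single minimisation in $i$, and the remaining cases become routine bookkeeping.
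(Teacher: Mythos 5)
Your decomposition is exactly the paper's: split according to which branch of $h_{\max}$ is active, compute the two exact ratio formulas $\frac{2^{d-i}-1}{2^{d-i-j}-1}$ (when $i<d-j$) and $\frac{2^{d-i}(2^{i+j-d+1}-1)(2^{n-i-j}-1)}{(2^{i+1}-1)(2^{d-i}-1)}$ (when $i\geq d-j$), and then do the four cases. Your treatment of (iv) is the same immediate substitution, and your treatment of (iii) is correct and in fact sharper than the paper's: the pairing $(2^s-1)(2^t-1)\geq 3(2^d-1)$ with $s+t=d+2$ together with $2^{d-i}(2^d-1)\geq 2(2^{d-i}-1)(2^{i+1}-1)$ (equivalent to $2^{2d-i}+2^{d-i}+2^{i+2}\geq 2^{d+2}+2$, minimized at $i=d-1$) is valid, and you rightly observe that $l=6$ is attained at $(i,j)=(d-1,2)$ — the paper's displayed strict inequality ``$>6$'' there is actually an equality, though its lemma statement with $\geq$ is unaffected.

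The one genuine flaw is in your second regime of cases (i)--(ii): the claim that $n\geq 2d+2$ forces $2^{n-i-j}-1\geq 2^{d+1}-1$ is false whenever $i+j>d+1$; e.g.\ $n=2d+2$, $i=d-1$, $j=d$ gives $n-i-j=3$. (Your side remark in the first regime that the ratio is $\geq 3$ when $j=1$ is also wrong for $d-i\geq 3$, but harmless since only $3/2$ is needed and your correct bound $>2^j=2$ suffices.) The fix is easy and you essentially name it yourself: drop the false inequality and genuinely replay the case (iii) pairing with $s=i+j-d+1\geq 1$, $t=n-i-j$, $s+t=n-d+1\geq d+3$, which gives $(2^s-1)(2^t-1)\geq 2^{d+2}-1>3(2^d-1)$ and hence ratio $\geq 6\geq 3$; alternatively, the paper's cruder estimate bounds that regime below by $3\cdot 2^{n-d-i-3}\geq 3$ directly. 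With that step repaired, your proof is complete and follows the paper's route.
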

\begin{proof}
If $i < d-j$, then $d-i \geq 2$, $h = i$, and $h' = i+1$. So, 
$
\displaystyle{\frac{|T_h(i,j)|}{|T_{h'}(i+1,j)|} = \frac{2^{d-i}-1}{2^{d-i-j}-1} > \frac{3\cdot 2^j}{4}.}
$
If $d-j \leq i$, then $n-i-j\geq 2$, $h = d-j$, and $h' = d-j$. So, 
$$
\frac{|T_h(i,j)|}{|T_{h'}(i+1,j)|} 
= \frac{2^{d-i}(2^{i+j-d+1}-1)(2^{n-i-j}-1)}{(2^{d-i}-1)(2^{i+1}-1)}
> \frac{2^{d-i}\cdot 2^{i+j-d} \cdot 2^{n-i-j-1} \cdot 3/2}{2^{d-i} \cdot 2^{i+1}}
= 3\cdot 2^{n-d-i-3}.$$
Thus, 
\begin{inparaenum}[(i)]
\item $l = 3$, if $(n,h) \neq (2d+1,d-j)$ and $2\leq j\leq  d$; and
\item $l = 3/2$, if $(n,h) \neq (2d+1,d-j)$ and $j = 1$. 
    
\noindent \item If $n=2d+1$ and $d-j < i$, then $d+1 \leq i+j \leq 2d-1$. So,
$$
\frac{|T_h(i,j)|}{|T_{h'}(i+1,j)|} 
= \frac{2^{d-i}(2^{(i+j)-d+1}-1)(2^{2d-(i+j)+1}-1)}{(2^{d-i}-1)(2^{i+1}-1)}
> \frac{2(2^2-1)(2^d-1)}{(2-1)(2^d-1)} = 6.$$

\noindent \item If $n=2d+1$ and $d-j = i$, then
$\displaystyle{
\frac{|T_h(i,j)|}{|T_{h'}(i+1,j)|} 
= \frac{2^{j}(2^{d+1}-1)}{(2^{j}-1)(2^{d-j+1}-1)}}.
$
\end{inparaenum}
\end{proof}
%-----------------------------------------------------

%------------------------Theorem10--------------------
\begin{theorem}\label{q=2} Let $q = 2$ and $n\geq 2d+1$. If $0\leq i \leq d-1$ and $1\leq j \leq d$, then $|G_j(i+1)| < |G_j(i)|.$
\end{theorem}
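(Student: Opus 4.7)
The plan is to mirror the structure of Theorem \ref{q>2}, using Lemmas \ref{Lemma0.8} and \ref{Lemma0.9} in place of Lemma \ref{Intermediatelemma}. Fix $i$ and $j$, and set $h := h_{\max}(i,j)$ and $h' := h_{\max}(i+1,j)$. Lemma \ref{BoundsGrassman} applied to $(i+1,j)$ yields $|G_j(i+1)| \leq |T_{h'}(i+1,j)|$, while Lemma \ref{Lemma0.8} applied to $(i,j)$ yields $|G_j(i)| \geq k\cdot |T_h(i,j)|$, where $k = k(i,j)$ is the constant from the appropriate case of that lemma. Combined with Lemma \ref{Lemma0.9} this gives
$$
\frac{|G_j(i)|}{|G_j(i+1)|} \;\geq\; k\cdot \frac{|T_h(i,j)|}{|T_{h'}(i+1,j)|} \;\geq\; kl,
$$
where $l = l(i,j)$ is the constant from Lemma \ref{Lemma0.9}. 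Since the four cases in both lemmas are keyed to the same conditions on $(n,h,j)$, the theorem reduces to verifying $kl > 1$ in each of them.

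For cases (i), (ii) and (iii), the constants are explicit numbers: case (i) gives $kl = (3/7)\cdot 3 = 9/7$, case (ii) gives $kl = (5/7)\cdot (3/2) = 15/14$, and case (iii) gives $kl = (5/21)\cdot 6 = 10/7$, each strictly greater than $1$. The interesting case is (iv), where $n = 2d+1$ and $d - j = i$: the denominator of $k$ is exactly the numerator of $l$, so
$$
kl \;=\; \frac{(2^j-1)(2^{d-j+1}-1) + (2^{d-j}-1)(2^j+1) + 2^{j+1}(2^j-1) + 1}{(2^j-1)(2^{d-j+1}-1)} \;=\; 1 \;+\; \frac{(2^{d-j}-1)(2^j+1)+2^{j+1}(2^j-1)+1}{(2^j-1)(2^{d-j+1}-1)},
$$
and the last fraction is strictly positive for every admissible $j \in \{1,\dots,d\}$ (when $j = d$ the summand $(2^{d-j}-1)(2^j+1)$ vanishes but the remaining two positive terms in the numerator still give a positive fraction).

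The main obstacle, exactly as in the $q\geq 3$ argument, is case (iv): this is precisely the configuration where the exponential separation between consecutive terms of $G_j(i)$ collapses, so neither of the two ratios in the chain above can be bounded by an exponential in $q$ and one must track the precise polynomial expressions. This delicate work, however, has already been carried out in Lemmas \ref{Lemma0.8}(iv) and \ref{Lemma0.9}(iv); once the factor $2^j(2^{d+1}-1)$ is seen to cancel between $k$ and $l$, the final inequality $kl > 1$ is immediate. No further refinement of the first expression (\ref{eq:firstexpressiongrassman}) in the spirit of the last case of Theorem \ref{q>2} appears to be needed, because in Lemma \ref{Lemma0.9} the ratio $|T_h(i,j)|/|T_{h'}(i+1,j)|$ is handled uniformly for all $0\leq i\leq d-1$.
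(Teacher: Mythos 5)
Your proposal is correct and follows essentially the same route as the paper: the published proof of Theorem \ref{q=2} likewise combines Lemmas \ref{BoundsGrassman}, \ref{Lemma0.8} and \ref{Lemma0.9} to get $\frac{|G_j(i)|}{|G_j(i+1)|} > kl$ and then checks $kl>1$ in the four matching cases, with exactly the cancellation of $2^j(2^{d+1}-1)$ in case (iv) that you identify. Your write-up merely makes the numerical values $9/7$, $15/14$, $10/7$ and the positivity in case (iv) explicit, which the paper leaves as a one-line verification.
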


\begin{proof} We use Lemmas \ref{BoundsGrassman}, \ref{Lemma0.8}, and \ref{Lemma0.9}. We have $\displaystyle{\frac{|G_j(i)|}{|G_j(i+1)|} > k\frac{|T_h(i,j)|}{|T_{h'}(i+1,j)|} > kl}$. For (i) - (iii), clearly $kl > 1$. For (iv), $\displaystyle{kl = \frac{(2^j-1)(2^{d-j+1}-1)+(2^{d-j}-1)(2^j+1)+2^{j+1}(2^j-1)+1}{(2^{j}-1)(2^{d-j+1}-1)}} >1$. Thus, $|G_j(i+1)| < |G_j(i)|$. 
\end{proof}

%----------------------------------------------------------------------
%-------------Bilinear-Forms-Graphs------------------------------------
%----------------------------------------------------------------------
\section{Proof of Conjecture \ref{Bilinear3.1}}

The terms corresponding to the indices $h$ such that $h > \min\{j,d-i\}$ in the above expression are zero. Therefore, we write the expression as 
$$
B_j(i) = \sum_{h=0}^{h_{\max}(i,j)} (-1)^{j-h}q^{eh+\binom{j-h}{2}} { d-h \brack d-j}{d-i \brack h}
$$
where $h_{\max}(i,j):= \min\{j,d-i\}$. Whenever $i,j$ are clear by the context we just write $h_{\max}$. Let us denote the $h$-th term of this expression by $T_h(i,j)$. Whenever $i,j$ are clear by the context we just write $T_h$.

Let $b_h(i,j)$ (if $i,j$ are clear by context we just write $b_h$) be the exponent of $q$ in $T_h(i,j)$ if we approximate ${n \brack k}$ with $q^{k(n-k)}$, that is,
$$
b_{h}(i,j) = h(d+e-i-h) + (d-j)(j-h)+ \binom{j-h}{2} = -\frac{1}{2}h^2+\left(e-i+\frac{1}{2}\right)h+j\left(d-\frac{(j+1)}{2}\right).
$$
Let $h_0 = e-i+\frac{1}{2}$. Then the quadratic expression $b_{h}(i,j)$ is maximal for $h = h_0$ and $b_{h_0+x} = b_{h_0} -\frac{1}{2}x^2$. The terms occurring in the sum have indices $h$ with $h \leq h_{\max} < h_0$, so the term with largest index has largest exponent.

%----------------------------------Lemma2-------------------------------
\begin{lemma}\label{Bilinear3.2}
If $q \geq 3$, or $q = 2$ and $d \neq e$, then $\displaystyle{|T_{h_{\max}}(i,j)|-|T_{h_{\max} - 1}(i,j)| \leq  |B_j(i)| \leq |T_{h_{\max}}(i,j)|}$, where $T_{-1}(i,j) = 0$. Moreover, the sign of  $B_j(i)$ is $(-1)^{\max(0,j+i-d)}$.
\end{lemma}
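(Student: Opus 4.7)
The plan is to invoke Lemma \ref{mainlemma} by showing that the displayed expression for $B_j(i)$ is an alternating series whose term absolute values strictly increase from $h=0$ to $h=h_{\max}$. Once this is established, both the two-sided bound on $|B_j(i)|$ and the sign formula drop out immediately: the sign of $B_j(i)$ then coincides with the sign of the dominant term $T_{h_{\max}}(i,j)$, which carries the factor $(-1)^{j-h_{\max}}$. Alternation of signs is visible from the $(-1)^{j-h}$ prefactor in $T_h(i,j)$, so the real content is the strict monotonicity of $|T_h(i,j)|$ in $h$.

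To compare consecutive terms I would compute the explicit ratio using the standard identities $\tfrac{{m \brack k}}{{m-1 \brack k}}=\tfrac{q^m-1}{q^{m-k}-1}$ and $\tfrac{{m \brack k-1}}{{m \brack k}}=\tfrac{q^k-1}{q^{m-k+1}-1}$, which are valid precisely because $1\leq h\leq h_{\max}=\min\{j,d-i\}$ keeps every denominator argument positive. Combined with the $q^{eh+\binom{j-h}{2}}$ prefactor, this yields
$$
\frac{|T_{h-1}(i,j)|}{|T_h(i,j)|} \;=\; q^{j-h-e}\cdot\frac{(q^{d-h+1}-1)(q^{h}-1)}{(q^{j-h+1}-1)(q^{d-i-h+1}-1)}.
$$
For the generic estimate I would use $q^a-1<q^a$ in the numerator and $q^a-1\geq q^{a-1}(q-1)$ in the denominator to get the clean bound
$$
\frac{|T_{h-1}(i,j)|}{|T_h(i,j)|} \;\leq\; \frac{q^2}{(q-1)^2}\, q^{-(e-i-h+1)},
$$
where the exponent $e-i-h+1\geq e-d+1\geq 1$ uses $h\leq d-i$ and $d\leq e$.

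Since $q^2/(q-1)^2<q$ for $q\geq 3$, this crude estimate already gives strict monotonicity in the entire range $q\geq 3$, for every value of $h\leq h_{\max}$. The main obstacle is the case $q=2$ with $d\neq e$, where the naive bound needs $e-i-h+1\geq 3$ and therefore fails at $h=h_{\max}$ when $e=d+1$. For this remaining regime I would keep the $-1$'s in the numerator, expanding $(q^{d-h+1}-1)(q^{h}-1)=q^{d+1}-q^{d-h+1}-q^h+1$, and split on whether $h_{\max}=j$ (so $i+j\leq d$) or $h_{\max}=d-i$ (so $i+j\geq d$). A direct comparison in each subcase, paralleling estimate (\ref{eq:n=2d}) in the proof of Lemma \ref{BoundsGrassman}, gives the required strict inequality. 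The excluded pair $q=2$ with $d=e$ is precisely the configuration in which $e-i-h+1$ can equal $1$, exactly where monotonicity genuinely fails; this mirrors the exclusion of $(n,q)=(2d,2)$ in the Grassmann case.

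With strict monotonicity in hand, Lemma \ref{mainlemma} delivers $|T_{h_{\max}}|-|T_{h_{\max}-1}|\leq |B_j(i)|\leq |T_{h_{\max}}|$ and identifies the sign of $B_j(i)$ as that of $T_{h_{\max}}(i,j)$, namely $(-1)^{j-h_{\max}}$. If $i+j\leq d$, then $h_{\max}=j$ and the sign is $+1=(-1)^0$; if $i+j>d$, then $h_{\max}=d-i$ and the sign is $(-1)^{j-(d-i)}=(-1)^{i+j-d}$. Together these give $(-1)^{\max(0,\,i+j-d)}$, as required.
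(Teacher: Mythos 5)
Your proposal is correct and follows essentially the same route as the paper: show that the alternating sum has terms strictly increasing in absolute value by bounding the consecutive-term ratio, then invoke Lemma \ref{mainlemma} and read the sign off the leading term $T_{h_{\max}}(i,j)$, whose sign $(-1)^{j-h_{\max}}$ equals $(-1)^{\max(0,\,i+j-d)}$. The only differences are bookkeeping (the paper estimates $|T_h|$ via the exponents $b_h$ and Lemma \ref{lem1}, while you bound the exact ratio directly, which handles $q\geq 3$ including $d=e$ uniformly), and the one subcase you defer ($q=2$, $e=d+1$, $h=h_{\max}=d-i$) in fact follows immediately from your own strict numerator estimate, since it gives $\displaystyle{\frac{|T_{h-1}|}{|T_h|} < 2^{\,i+h+1-e} \leq 1}$.
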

\begin{proof}
We note two inequalities. First, $b_h - b_{h-1} = e-i-h+1 \geq 2$ for all $1\leq h \leq h_{\max}$ except if $d=e$ and $h = d-i$. Second, $q^{b_h}< |T_h| < 2^2 q^{b_h}$ for all $q\geq 3$ by using Lemma \ref{lem1} (i) and (iii). These two inequalities imply that $\displaystyle{\frac{|T_h|}{|T_{h-1}|}> \frac{q^{b_h-b_{h-1}}}{4} \geq \frac{q^2}{4} > 1}$
for all $1\leq h \leq h_{\max}$ and $q\geq 3$ except if $d=e$ and $h = d-i$. However, if $q\geq 3$, $d=e$, and $h = d-i$, then
$$
\frac{|T_{d-i}|}{|T_{d-i-1}|} =\left|\frac{q^d(q^{j-(d-i)+1}-1)(q-1)}{q^{j-(d-i)(q^{i+1}-1)(q^{d-i}-1)}}\right| > (q-1)(1-q^{-(j-(d-i)+1)}) > (q-1)(1-q^{-1}) \geq \frac{4}{3} > 1. 
$$
On the other hand, if $q =2$ and $d\neq e$, then $e-i-h \geq 1$. Thus, for all $1\leq h \leq h_{\max}$ we have that
$$
\frac{|T_{h}|}{|T_{h-1}|}= \frac{(2^{j-h+1}-1)(2^{d-i-h+1}-1)}{2^{-e+j-h}(2^{d-h+1}-1)(2^h-1)}
> \frac{2^{j-h+d-i-h}}{2^{-e+j-h+d-h+1+h}}
= 2^{e-i-h-1}
\geq  1.
$$
Therefore, if $q \geq 3$, or $q=2$ and $d\neq e$, either the expression for $B_j(i)$ is of only one non-zero term or it is an alternating series with terms increasing is absolute value, from $h = 0$ to $h_{\max}$. Thus, by using Lemma \ref{mainlemma} we obtain both of the assertion because sign of the main term, ${T_{h_{\max}}(i,j)}$, is $(-1)^{\max(0,j+i-d)}$.
\end{proof}

%----------------------------------Lemma3--------------------------
\begin{lemma} \label{Bilinear3.3}
Let $q\geq 3$ and $s := b_{h_{\max}}(i,j)$. If $0 \leq i \leq d$ and $1\leq j \leq d$, then $\displaystyle{\frac{1}{4}q^{s} < |B_{j}(i)| < 2q^s}$. 
\end{lemma}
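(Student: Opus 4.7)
The plan is to adapt the strategy of Lemma \ref{Intermediatelemma} to the bilinear forms setting. Lemma \ref{Bilinear3.2} already supplies the two-sided sandwich $|T_{h_{\max}}(i,j)| - |T_{h_{\max}-1}(i,j)| \leq |B_j(i)| \leq |T_{h_{\max}}(i,j)|$, so both inequalities reduce to controlling the leading term $T_{h_{\max}}$ and the ratio $|T_{h_{\max}-1}|/|T_{h_{\max}}|$. Throughout, set $h := h_{\max}(i,j) = \min\{j, d-i\}$ and recall that $s = b_h(i,j)$ is the $q$-exponent obtained by approximating both Gaussian coefficients in $T_h$ via $q^{k(n-k)}$.

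For the upper bound, I would observe that at $h = h_{\max}$ at least one of the two Gaussian factors of $T_h$ equals $1$: if $h = j$ then ${d-h \brack d-j} = 1$, while if $h = d-i$ then ${d-i \brack h} = 1$. Applying Lemma \ref{estimatelemma}(iii) (valid for $q \geq 3$) to the remaining Gaussian coefficient gives $|T_{h_{\max}}| < 2q^s$, and the conclusion $|B_j(i)| < 2q^s$ follows from the upper half of the sandwich.

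For the lower bound, I would separate the cases $h_{\max} = 0$ and $h_{\max} \geq 1$. The case $h_{\max} = 0$ forces $i = d$, the sum collapses to a single term, and Lemma \ref{estimatelemma}(i) gives $|B_j(d)| = q^{\binom{j}{2}}{d \brack j} \geq q^s > \tfrac{1}{4}q^s$. When $h_{\max} \geq 1$, applying Lemma \ref{estimatelemma}(i) to both Gaussian factors of $T_h$ yields $|T_h| \geq q^s$, so it remains to bound $|T_{h-1}|/|T_h|$. I would reuse exactly the ratio estimates already derived inside the proof of Lemma \ref{Bilinear3.2}: in the generic regime ($d \neq e$, or $d = e$ with $h < d-i$) the inequality $|T_h|/|T_{h-1}| > q^{b_h-b_{h-1}}/4 \geq q^2/4 \geq 9/4$ gives $|T_{h-1}| < \tfrac{4}{9}|T_h|$, whereas in the special regime $d = e$, $h = d-i$ the inequality $|T_h|/|T_{h-1}| > (q-1)(1-q^{-1}) \geq 4/3$ gives $|T_{h-1}| < \tfrac{3}{4}|T_h|$. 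In either case, $|B_j(i)| \geq |T_h| - |T_{h-1}| > \tfrac{1}{4}|T_h| \geq \tfrac{1}{4}q^s$.

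The one technical point requiring care is the exceptional regime $d=e$, $h = d-i$, where the consecutive gap $b_h - b_{h-1}$ degenerates to $1$ and only the weaker $\tfrac{3}{4}$ bound on $|T_{h-1}|/|T_h|$ is available; this is precisely the bottleneck that fixes the constant $\tfrac{1}{4}$ appearing in the statement. Since every ingredient already lives inside Lemmas \ref{Bilinear3.2} and \ref{estimatelemma}, no genuinely new estimate is needed and the argument should be short.
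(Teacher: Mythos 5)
Your proposal is correct: both inequalities follow, as in the paper, from the sandwich of Lemma \ref{Bilinear3.2} together with Lemma \ref{estimatelemma}, and your upper bound is verbatim the paper's. Where you diverge is in the lower bound. The paper does not reuse the consecutive-ratio estimates from the proof of Lemma \ref{Bilinear3.2}; instead it only uses the uniform gap $b_{h}-b_{h-1}\geq 1$ and splits according to how many Gaussian factors of $T_{h_{\max}}$ equal $1$: when exactly one is nontrivial it bounds $|T_{h_{\max}}|\geq (1+\tfrac1q)q^{s}$ via Lemma \ref{estimatelemma}(ii) and $|T_{h_{\max}-1}|<2\cdot\tfrac{q}{q-1}q^{s-1}$ via (iii)--(iv), giving $|B_j(i)|>\tfrac13 q^{s}$, and when both are trivial (the cases $j=d-i$ or $j=d$) it gets $|B_j(i)|>\bigl(1-\tfrac{q}{(q-1)^2}\bigr)q^{s}\geq\tfrac14 q^{s}$. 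Your route instead splits on the generic gap-$\geq 2$ regime versus the degenerate case $d=e$, $h_{\max}=d-i$, recycling the exact ratio bound $(q-1)(1-q^{-1})\geq\tfrac43$ already computed in Lemma \ref{Bilinear3.2}; this is a valid and economical alternative, and you correctly identify the degenerate case as the bottleneck forcing the constant $\tfrac14$. The only trade-off is that the paper's finer factor-by-factor estimates foreshadow the sharper bounds (such as $|B_j(i)|>\tfrac23 q^{s}$ or $\tfrac12 q^{s}$) that are invoked again in the exceptional cases of Theorem \ref{Bilinear3.4}, whereas your argument yields exactly what the lemma states and nothing more; for the statement at hand both are equally sufficient.
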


\begin{proof}
Let $h = h_{\text{max}}$. Then either ${d-h \brack d-j}$ or ${d-i \brack h}$ is equal to $1$. By using Lemma \ref{lem1} (iii) and Lemma \ref{Bilinear3.2} we obtain the upper bound $|B_j(i)| \leq |T_{h}| < 2q^s$. For the lower bound, if $h=0$, then we are done by Lemma \ref{lem1} (i). For $h \geq 1$, we consider two different cases. We use Lemma \ref{Bilinear3.2}, Lemma \ref{lem1} (ii), (iii), (iv), and the fact that $b_{h}-b_{h-1} \geq 1$. First, let $1\leq j \leq d-1$ and $j \neq d-i$. In this case there is exactly one Gaussian coefficient in $T_{h}$ that is not equal to 1, thus, 
$$
|B_{j}(i)| \geq |T_{h}| - |T_{h-1}|
> \left(1+\frac{1}{q}\right)q^s - 2\cdot \frac{q}{q-1}\cdot q^{s-1}
= q^s\left(1+\frac{1}{q} -\frac{2}{q-1}\right)
\geq  \frac{1}{3} q^s 
> \frac{1}{4} q^s.
$$
Second, let $j = d-i$ or $j=d$. In this case both Gaussian coefficients in $T_{h}$ are equal to $1$, thus,
\begin{align*}
|B_{j}(i)| \geq |T_{h}| - |T_{h-1}|
> q^s - \left(\frac{q}{q-1}\right)^2\cdot q^{s-1}
= q^s\left(1 -\frac{q}{(q-1)^2}\right)
\geq  \frac{1}{4} q^s. \tag*{\qedhere} 
\end{align*} 
\end{proof}

%-------------------------------Theorem4--------------------
\begin{theorem}\label{Bilinear3.4}
Let $q\geq 3$. If $0 \leq i \leq d-1$ and $1\leq j \leq d$, then $|B_j(i)| > |B_j(i+1)|$. 
\end{theorem}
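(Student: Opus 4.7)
The plan is to mirror the proof of Theorem~\ref{q>2}: use the dominant-term estimate in Lemma~\ref{Bilinear3.3} to reduce the desired inequality to a comparison of the exponents $s := b_{h_{\max}(i,j)}(i,j)$ and $s' := b_{h_{\max}(i+1,j)}(i+1,j)$, and then treat by hand the boundary cases where the gap $s - s'$ is too small for the estimate to finish the job.

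First I would compute $s - s'$ directly from the quadratic form of $b_h$. There are two cases. If $i \leq d-j-1$, then $h_{\max}(i,j) = h_{\max}(i+1,j) = j$ and a short calculation gives $s - s' = j$. If $i \geq d - j$, then $h_{\max}(i,j) = d-i$ and $h_{\max}(i+1,j) = d-i-1$, and one obtains $s - s' = e - i$. Lemma~\ref{Bilinear3.3} then yields
\[
\frac{|B_j(i)|}{|B_j(i+1)|} > \frac{(1/4)\,q^{s}}{2\,q^{s'}} = \frac{q^{s-s'}}{8},
\]
so whenever $s - s' \geq 2$ this is $\geq q^{2}/8 \geq 9/8 > 1$, completing the proof. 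Only two boundary situations escape this generic argument: (a) $j = 1$ with $i \leq d-2$, where $s - s' = 1$; and (b) $e = d$ with $i = d-1$, where again $s - s' = 1$.

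For (a), the sum defining $B_{1}(i)$ has only two nonzero terms and evaluates to $B_{1}(i) = (q^{e+d-i} - q^{e} - q^{d} + 1)/(q-1)$. Writing $N := q^{e+d-i}$ and $c := q^{e} + q^{d} - 1$, one obtains $|B_{1}(i)|/|B_{1}(i+1)| = (qN - qc)/(N - qc)$, and since both numerator and denominator are positive (this uses $e \geq d$ and $i \leq d-2$), the ratio exceeds $1$ as soon as $q > 1$. For (b), using the identity $\binom{d-1}{j-1}_{q}/\binom{d}{j}_{q} = (q^{j}-1)/(q^{d}-1)$ together with $B_{j}(d) = (-1)^{j} q^{\binom{j}{2}} \binom{d}{j}_{q}$ and the two-term expression for $B_{j}(d-1)$, I would reduce to checking
\[
\frac{|B_{j}(d-1)|}{|B_{j}(d)|} = \frac{q^{d+1} - q^{d-j+1} - q^{d} + 1}{q^{d}-1} > 1,
\]
equivalently $q^{d}(q-2) > q^{d-j+1} - 2$, which is immediate for all $q \geq 3$ and $j \geq 1$.

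The main obstacle here is not conceptual but organizational: the generic estimate via Lemma~\ref{Bilinear3.3} is too crude when $s - s' = 1$, so one must dispose of the two boundary situations by the short explicit computations above. These are the direct analogues of the ``$j = 1$ with $i < d-1$'' and ``$(n,h) = (2d, d-j)$ with $i = d-1$'' sub-cases in the proof of Theorem~\ref{q>2}.
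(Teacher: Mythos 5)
Your proof is correct and follows essentially the same strategy as the paper: compute $s-s'$ from the quadratic form $b_h$, dispatch the generic case $s-s'\ge 2$ via Lemma~\ref{Bilinear3.3}, and then treat the two boundary situations $j=1$ (with $i\le d-2$) and $e=d,\ i=d-1$ separately. Where you differ is only in how those boundary cases are disposed of: the paper sharpens the multiplicative constants of Lemma~\ref{Bilinear3.3} in each exceptional situation (claiming $|B_j(i)| > \tfrac23 q^s$ when exactly one of (i), (ii) holds, and a still tighter pair of constants when both hold), whereas you substitute explicit closed forms for $B_1(i)$ and for $B_j(d-1)$, $B_j(d)$ and verify the ratio inequality directly. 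Your version is arguably cleaner and easier to check, at the cost of not reusing the estimation machinery the paper has already set up. One small remark: in the case $d-i \le j$ the paper's proof states $s-s'=e-(i+1)$, but the correct exponent difference (which you computed) is $s-s'=e-i$; the paper's stated list of exceptional cases is nonetheless the one matching $s-s'=e-i$, so this appears to be a typo there rather than a divergence in substance.
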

\begin{proof}
Let $h := h_{\text{max}}(i,j)$, $h' := h_{\text{max}}(i+1,j)$, $s := b_{h}(i,j)$, and $s' := b_{h'}(i+1,j)$. If $d-i \leq j$, then $s-s'=e-(i+1)$ and if $j < d-i$, then $s-s'=j$. Hence, $s - s' \geq 2$ except if \begin{inparaenum}[(i)]
\item $e=d$ and $i =d-1$, or \item $j=1$. \end{inparaenum} Thus, Lemma \ref{Bilinear3.3} implies that
$\displaystyle{\frac{|B_j(i)|}{|B_j(i+1)|} > \frac{\frac{1}{4}\cdot q^{2}}{2}> 1}$
except if either (i) or (ii) holds. If either (i) or (ii) holds, we can find better bounds for $|B_j(i)|$ than given in Lemma \ref{Bilinear3.3}, with that we have $s-s'\geq 1$. That is, if either (i) or (ii) holds but not both, then 
${|B_{j}(i)| > 2/3 \cdot q^{s}}$. Thus, by again using Lemma \ref{Bilinear3.3} we obtain that $\displaystyle{\frac{|B_j(i)|}{|B_j(i+1)|} > \frac{\frac{2}{3}\cdot q^{1}}{2}\geq  1}$. Lastly, if both (i) and (ii) hold, then $|B_j(i)| > 1/2\cdot q^s$ and $|B_j(i+1)| < 3/2 \cdot q^{s'}$, and thus $|B_j(i)| >|B_j(i+1)|$.   
\end{proof}

%---------------------------Lemma5----------------
\begin{lemma}\label{Bilinear3.5}
Let $q = 2$, $0\leq i \leq d$, $1\leq j \leq d$, and $h:= h_{\max}(i,j)$. Then $\displaystyle{1 \geq \frac{|B_j(i)|}{|T_{h}(i,j)|} \geq k}$, where,
\begin{enumerate}[(i)]
\item $k = 1/3$, if $d-i < j$ and $e\geq d+1$;
\item $k = 2/3$, if $d-i > j$ and $e\geq d+1$;
\item $k = 1/2$, if $d-i =j$ and $e \geq d+2$;
\item $\displaystyle{k = \frac{2^{d+1-j}+2^j-1}{2^{d+1}}}$, if $d-i = j$ and $e = d+1$.
\end{enumerate}
\end{lemma}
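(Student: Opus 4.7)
The strategy parallels that of Lemma \ref{Lemma0.8} in the Grassmann section: I would reduce the problem to bounding the ratio $|T_{h-1}(i,j)|/|T_h(i,j)|$ case-by-case. If $h_{\max}=0$, then $B_j(i)=T_0(i,j)$ and the ratio is trivially $1$, so I assume $h:=h_{\max}\geq 1$. In all four cases the hypothesis $e\geq d+1$ forces $d\neq e$, so Lemma \ref{Bilinear3.2} yields
$$|T_h(i,j)| - |T_{h-1}(i,j)| \leq |B_j(i)| \leq |T_h(i,j)|.$$
The upper bound $|B_j(i)|/|T_h(i,j)|\leq 1$ is immediate, and the lower bound reduces to verifying $|T_{h-1}(i,j)|/|T_h(i,j)| \leq 1-k$ in each case.

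The key input is the explicit ratio formula obtained inside the proof of Lemma \ref{Bilinear3.2} at $q=2$, namely
$$\frac{|T_{h-1}(i,j)|}{|T_h(i,j)|} \;=\; \frac{2^{j-h-e}(2^{d-h+1}-1)(2^{h}-1)}{(2^{j-h+1}-1)(2^{d-i-h+1}-1)}.$$
The four cases correspond exactly to the three configurations of $h=\min\{j,d-i\}$ paired with the two regimes $e=d+1$ versus $e\geq d+2$. In case (i), $h=d-i$ with $u:=i+j-d\geq 1$; using the elementary bounds $(2^{i+1}-1)(2^{d-i}-1)<2^{d+1}$ in the numerator and $2^{u+1}-1\geq \tfrac{3}{2}\cdot 2^{u}$ in the denominator (valid since $u\geq 1$) collapses the ratio to at most $\tfrac{2}{3}\cdot 2^{d+1-e}\leq \tfrac{2}{3}$, giving $k=\tfrac{1}{3}$. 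Case (ii), $h=j$ with $v:=d-i-j\geq 1$, is symmetric but gains an extra factor of $1/2$ from the inequality $d+2-e-v\leq 0$, producing the stronger estimate $|T_{h-1}|/|T_h|\leq \tfrac{1}{3}$ and hence $k=\tfrac{2}{3}$. In case (iii), $h=j=d-i$ collapses the ratio to the closed form $2^{-e}(2^{d-j+1}-1)(2^{j}-1)$, which is strictly less than $2^{d+1-e}\leq \tfrac{1}{2}$ under $e\geq d+2$.

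Case (iv) is the one requiring an equality rather than an estimate: at $h=j=d-i$ and $e=d+1$, the same closed form becomes $(2^{d-j+1}-1)(2^{j}-1)/2^{d+1}$, and expanding the numerator rewrites this exactly as $1-(2^{d+1-j}+2^{j}-1)/2^{d+1}$, matching the claimed $k$ on the nose. I do not expect a genuine obstacle here; the only delicate point is the case analysis for $h_{\max}$ and checking that $u\geq 1$ (respectively $v\geq 1$) in cases (i) and (ii) so that the slightly tighter denominator bound $2^{u+1}-1\geq \tfrac{3}{2}\cdot 2^{u}$ is legitimate.
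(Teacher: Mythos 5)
Your proposal is correct and follows essentially the same route as the paper: reduce to bounding $|T_{h-1}|/|T_h|$ via Lemma \ref{Bilinear3.2}, compute this ratio explicitly at $q=2$, and bound it case by case using the same elementary estimates (e.g.\ $2^{u+1}-1\geq \tfrac{3}{2}\cdot 2^{u}$), with case (iv) handled by the same exact expansion $1-(2^{d+1-j}+2^{j}-1)/2^{d+1}$.
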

\begin{proof} If $h =0$, then assertion follows since $\displaystyle{\frac{|B_j(i)|}{|T_h|} = 1}$. So, we assume that $h \geq 1$ and by Lemma \ref{Bilinear3.2} we have $\displaystyle{1 \geq \frac{|B_j(i)|}{|T_h|} \geq 1-\frac{|T_{h-1}|}{|T_h|}}$. Hence, we prove the assertion by finding the upper bounds for 
$$
\frac{|T_{h-1}|}{|T_h|} = \frac{2^{-e+j-h}(2^{d-h+1}-1)(2^{h}-1)}{(2^{j-h+1}-1)(2^{d-i-h+1}-1)}.
$$

\begin{enumerate}[(i)] 
\item If $d-i < j$, then $h = d-i$. So,
$$
\displaystyle{\frac{|T_{d-i-1}|}{|T_{d-i}|}} 
= \frac{2^{-e+j-(d-i)}(2^{i+1}-1)(2^{d-i}-1)}{(2^{j-(d-i)+1}-1)}
< \frac{2^{-e+j-(d-i)}2^{i+1}2^{d-i}}{2^{j-(d-i)}\cdot 3/2}
= \frac{2}{3 \cdot 2^{e-d-1}}
< \frac{2}{3}.
$$
\item If $d-i > j$, then $h = j$. So,
$$\displaystyle{\frac{|T_{j-1}|}{|T_{j}|}}
= \frac{2^{-e}(2^{d-j+1}-1)(2^j-1)}{(2^{(d-i)-j+1}-1)}
<  \frac{2^{-e}2^{d-j+1}2^{j}}{2^{(d-i)-j}\cdot 3/2}
< \frac{2}{3 \cdot 2^{e-1-i-j}}
< \frac{2}{3 \cdot 2^1} = \frac{1}{3}. 
$$

\item If $d-i = j$ and $e \geq d+2$, then $h = j$. So,
$$
\displaystyle{\frac{|T_{j-1}|}{|T_j|}} 
= 2^{-e}(2^{d-j+1}-1)(2^{j}-1)
<  2^{-e}2^{d-j+1}2^{j}
= \frac{2}{2^{e-d}}
< \frac{1}{2}.$$
\item If $d-i = j$ and $e = d+1$, then $h = j$. So,
\begin{align*}
\frac{|T_{j-1}|}{|T_j|}
= \frac{(2^{d-j+1}-1)(2^{j}-1)}{2^{d+1}} = 1- \frac{(2^{d+1-j}+2^j-1)}{2^{d+1}}. \tag*{\qedhere} 
\end{align*}
\end{enumerate} 
\end{proof}

%----------------------Lemma6----------------
\begin{lemma}\label{Bilinear3.6}
Let $q=2$, $0\leq i \leq d-1$, $1\leq j \leq d$, $h := h_{\max}(i,j)$ and $h' := h_{\max}(i+1,j)$. Then
$\displaystyle{\frac{|T_h(i,j)|}{|T_{h'}(i+1,j)|} > l}$, where,  
\begin{enumerate}[(i)]
\item $l = 3$, if $d-i < j$ and $e\geq d+1$; 
\item $l = 3/2$, if $d-i >j$ and $e\geq d+1$; 
\item $l = 2$, if $d-i = j$ and $e\geq d+2$;
\item $l = 2^j$, if $d-i = j$ and $e = d+1$.
\end{enumerate}
\end{lemma}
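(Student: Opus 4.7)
My plan is to handle the four cases of the lemma separately, in each case computing the ratio $|T_h(i,j)|/|T_{h'}(i+1,j)|$ directly from the explicit form $T_h(i,j) = (-1)^{j-h} 2^{eh + \binom{j-h}{2}}{d-h \brack d-j}{d-i \brack h}$. The key observation is that $h_{\max}(i,j) = \min\{j, d-i\}$ behaves differently depending on the sign of $d-i-j$ and on whether decreasing $d-i$ by one crosses the cutoff, which is exactly what produces the four subcases.

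Case (ii) is the cleanest: when $d-i > j$ we have $h = h' = j$, so the powers of $2$ and the first Gaussian coefficient cancel entirely, and the ratio reduces to ${d-i \brack j}_2 / {d-i-1 \brack j}_2 = (2^{d-i}-1)/(2^{d-i-j}-1)$. A short algebraic manipulation shows this exceeds $2^j$, which is already $\geq 2 > 3/2$. Cases (iii) and (iv) both sit on the diagonal $d-i = j$, so $h = j$ but $h' = j - 1$, and both Gaussian coefficients in $|T_h(i,j)|$ collapse to $1$; the ratio simplifies to $2^e/(2^{d-j+1}-1)$. Bounding the denominator by $2^{d-j+1}$ yields $2^{e-d+j-1}$, which is at least $2^{j+1} \geq 4 > 2$ whenever $e \geq d+2$, while the same crude bound yields the sharper $2^j$ when $e = d+1$.

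Case (i), with $d-i < j$, will be the most delicate, because $h = d-i$ and $h' = d-i-1$ differ by $1$, so the $e$-power, the $\binom{j-h}{2}$-prefactor, and both Gaussian coefficients shift simultaneously. I plan to use the standard identity ${i+1 \brack d-j}_2 = \frac{2^{i+1}-1}{2^{i+j-d+1}-1}{i \brack d-j}_2$ to resolve the ratio of binomial factors, after which careful bookkeeping on the $e$-exponent together with $\binom{j-d+i+1}{2} - \binom{j-d+i}{2} = j-d+i$ should collapse everything into the clean form $2^{e-i}\cdot \frac{1-2^{-(i+j-d+1)}}{1-2^{-(i+1)}}$. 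The hypothesis $d-i \leq j-1$ forces $i+j-d+1 \geq 2$, so the numerator factor is at least $3/4$, while $e \geq d+1$ and $i \leq d-1$ force $2^{e-i} \geq 4$, yielding the required $l = 3$. The main obstacle is the exponent bookkeeping in this case; nothing deep is required, but cleanly separating the $2$-power contributions from the Gaussian-coefficient contributions is essential to see that the final bound depends only on $e-i$.
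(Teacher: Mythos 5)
Your proposal is correct and follows essentially the same route as the paper: in each of the four cases one identifies $h$ and $h'$, writes the ratio $|T_h(i,j)|/|T_{h'}(i+1,j)|$ explicitly (the Gaussian-coefficient and $2$-power bookkeeping you describe, e.g.\ the clean form $2^{e-i}\frac{1-2^{-(i+j-d+1)}}{1-2^{-(i+1)}}$ in case (i) and $2^e/(2^{d-j+1}-1)$ on the diagonal, checks out), and bounds it elementarily using $e\geq d+1$ and $i\leq d-1$. The only differences are cosmetic sharpenings of constants (e.g.\ $2^j$ instead of $\tfrac32\cdot 2^{j-1}$ in case (ii)), so no further changes are needed.
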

\begin{proof}
\begin{enumerate}[(i)]
\item If $d-i < j$ and $e\geq d+1$, then $h = d-i$ and $h' = d-i-1$. So,
$$
\frac{|T_{d-i}(i,j)|}{|T_{d-i-1}(i+1,j)|}
= \frac{(2^{j -(d-i)+1}-1)}{2^{-e + j-(d-i)} (2^{i+1}-1)}
> \displaystyle{\frac{2^{j-(d-i)}\cdot 3/2}{2^{-e+j-(d-i)} 2^{i+1}}}
= 3\cdot 2^{e-i-2}
\geq 3.
$$

\item If $d-i >j$ and $e\geq d+1$, then $h =j$ and $h'=j$. So,
$$\frac{|T_{j}(i,j)|}{|T_{j}(i+1,j)|}
= \frac{(2^{d-i}-1)}{(2^{(d-i)-j}-1)}
> \frac{2^{d-i-1}\cdot 3/2}{2^{d-i-j}}
= \frac{3}{2}\cdot 2^{j-1}
\geq \frac{3}{2}.
$$
\item If $d-i = j$ and $e\geq d+2$, then $h = j$ and $h'=j-1$. So,
$$\frac{|T_{j}(i,j)|}{|T_{j-1}(i+1,j)|}
= \frac{2^e}{(2^{i+1}-1)}
> \frac{2^e}{2^{i+1}}
= 2^{e-i-1}
\geq 2.$$
\item If $d-i = j$ and $e = d+1$, then $h = j$ and $h'=j-1$. So,
$$\frac{|T_{j}(i,j)|}{|T_{j-1}(i+1,j)|}
= \frac{2^{d+1}}{(2^{i+1}-1)}
> \frac{2^{d+1}}{2^{i+1}}
= 2^{j}.$$
\end{enumerate}
This shows the assertion. 
\end{proof}

%-------------------------------Theorem7-----------------------
\begin{theorem}\label{Bilinear3.7}
Let $q=2$ and $e\geq d+1$. If $0\leq i \leq d-1$ and $1\leq j \leq d$, then $|B_j(i)| > |B_j(i+1)|$.
\end{theorem}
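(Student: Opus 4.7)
The plan is to mirror the structure of the proof of Theorem \ref{q=2} almost verbatim. Set $h := h_{\max}(i,j)$ and $h' := h_{\max}(i+1,j)$. Lemma \ref{Bilinear3.5} provides a lower bound $|B_j(i)| \geq k\,|T_h(i,j)|$ for an explicit constant $k$, Lemma \ref{Bilinear3.2} provides the upper bound $|B_j(i+1)| \leq |T_{h'}(i+1,j)|$, and Lemma \ref{Bilinear3.6} provides a lower bound $|T_h(i,j)|/|T_{h'}(i+1,j)| > l$ for an explicit constant $l$. Chaining these three estimates yields
$$\frac{|B_j(i)|}{|B_j(i+1)|} \;\geq\; k \cdot \frac{|T_h(i,j)|}{|T_{h'}(i+1,j)|} \;>\; k\,l,$$
so the theorem reduces to verifying $k\,l \geq 1$ in each of the four cases distinguished by whether $d-i$ is less than, greater than, or equal to $j$, and whether $e = d+1$ or $e \geq d+2$.

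In cases (i)--(iii) the pairs $(k,l)$ are $(1/3,3)$, $(2/3,3/2)$, and $(1/2,2)$, whose nominal products are each exactly $1$. Strictness of the desired inequality is recovered from the slack factors of the form $2^{e-d-1}$, $2^{j-1}$, or $2^{e-i-1}$ that appear inside the proofs of Lemmas \ref{Bilinear3.5} and \ref{Bilinear3.6}: in each case the hypothesis $e \geq d+1$ (together with $i \leq d-1$ or $j \geq 1$) ensures that at least one of the two factors strictly exceeds its tabulated value, so $k\,l > 1$ follows upon multiplication.

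The only case requiring an explicit algebraic computation is case (iv), where $d-i = j$ and $e = d+1$. Here $k = (2^{d+1-j} + 2^j - 1)/2^{d+1}$ and $l = 2^j$, so
$$k\,l \;=\; \frac{2^j\bigl(2^{d+1-j} + 2^j - 1\bigr)}{2^{d+1}} \;=\; 1 \;+\; \frac{2^j(2^j-1)}{2^{d+1}} \;>\; 1,$$
since $j \geq 1$ makes the second summand strictly positive. I expect this boundary case to be the only step that demands any slowdown; once it is identified, the conclusion follows immediately. No new machinery beyond the three lemmas already in hand is needed, and the proof should be essentially a two-line assembly plus the one algebraic identity above.
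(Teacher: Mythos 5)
Your proposal is correct and follows essentially the same route as the paper: chain Lemma \ref{Bilinear3.5}, the bound $|B_j(i+1)|\leq|T_{h'}(i+1,j)|$, and Lemma \ref{Bilinear3.6} to get $|B_j(i)|/|B_j(i+1)| > kl$, with $kl=1$ in cases (i)--(iii) and $kl = 1 + 2^j(2^j-1)/2^{d+1} > 1$ in case (iv), exactly as in the paper. The only difference is cosmetic: your appeal to the internal slack factors of the lemmas' proofs is unnecessary, since the strict inequality already present in the statement of Lemma \ref{Bilinear3.6} delivers strictness when $kl=1$.
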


\begin{proof}
We use Lemmas \ref{Bilinear3.5} and \ref{Bilinear3.6}. We have $\displaystyle{\frac{|B_j(i)|}{|B_j(i+1)|}} \geq k\frac{|T_h(i,j)|}{|T_{h'}(i+1,j)|} > kl$. For (i) - (iii), $kl = 1$. For (iv), $\displaystyle{kl = \frac{2^{d+1}+2^{2j}-2^j}{2^{d+1}}> 1}$. Thus, $|B_j(i)| > |B_j(i+1)|$.
\end{proof}

%--------------------------Theorem8--------------------------
\begin{theorem} \label{Bilinear3.8}
For $q \geq 3$, or $q = 2$ and $d \neq e$, $B_j(d - j + 1)$ is the smallest eigenvalue in the distance-$j$ graph for $1 \leq  j \leq d$. 
\end{theorem}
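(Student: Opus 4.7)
The plan is to deduce Theorem~\ref{Bilinear3.8} by combining the sign information from Lemma~\ref{Bilinear3.2} with the strict decrease in absolute value established in Theorems~\ref{Bilinear3.4} and~\ref{Bilinear3.7}. The hypothesis ``$q \geq 3$, or $q = 2$ and $d \neq e$'' matches exactly the union of hypotheses of those two theorems (recalling the standing assumption $d \leq e$, so $d \neq e$ is the same as $e \geq d+1$).

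First I would invoke Lemma~\ref{Bilinear3.2} to read off the sign pattern of the sequence $B_j(0), B_j(1), \ldots, B_j(d)$: the sign of $B_j(i)$ is $(-1)^{\max(0,\, j+i-d)}$, so $B_j(i) \geq 0$ for $0 \leq i \leq d-j$, and for $i \geq d-j+1$ the signs alternate, beginning with $B_j(d-j+1) < 0$. Thus the negative eigenvalues in the distance-$j$ graph are exactly
\[
B_j(d-j+1),\ B_j(d-j+3),\ B_j(d-j+5),\ \ldots
\]
(those with $d-j+1+2k \leq d$). Note also that $1 \leq d-j+1 \leq d$ for $1 \leq j \leq d$, so $B_j(d-j+1)$ is a legitimate eigenvalue.

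Next I would apply the monotonicity: by Theorem~\ref{Bilinear3.4} (for $q \geq 3$) and Theorem~\ref{Bilinear3.7} (for $q = 2$ with $e \geq d+1$), one has $|B_j(i)| > |B_j(i+1)|$ for all $0 \leq i \leq d-1$ and $1 \leq j \leq d$. Iterating yields
\[
|B_j(0)| > |B_j(1)| > \cdots > |B_j(d)|.
\]
In particular, among the negative eigenvalues listed above, $|B_j(d-j+1)|$ is strictly the largest, so $B_j(d-j+1)$ is strictly less than every other negative eigenvalue. Since it is also less than every nonnegative eigenvalue, $B_j(d-j+1)$ is the smallest eigenvalue of the distance-$j$ graph.

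Since all of the substantive work has been done in Lemma~\ref{Bilinear3.2} and Theorems~\ref{Bilinear3.4} and~\ref{Bilinear3.7}, there is no real obstacle here; the argument is essentially a bookkeeping step. The only point that requires a moment of care is verifying that the hypotheses ``$q \geq 3$'' and ``$q=2,\ e \geq d+1$'' together cover precisely the statement's hypothesis, and that the range $1 \leq d-j+1 \leq d$ indeed makes $B_j(d-j+1)$ a valid eigenvalue for every $1 \leq j \leq d$.
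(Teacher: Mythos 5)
Your proposal is correct and follows essentially the same route as the paper: the paper likewise combines the monotonicity of $|B_j(i)|$ from Theorems~\ref{Bilinear3.4} and~\ref{Bilinear3.7} with the sign pattern from Lemma~\ref{Bilinear3.2} (negative eigenvalues are $B_j(d-j+1+2t)$, $t \geq 0$) to conclude that $B_j(d-j+1)$ is the smallest. Your write-up merely spells out the bookkeeping in slightly more detail.
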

\begin{proof}
By Theorem \ref{Bilinear3.4} and \ref{Bilinear3.7} we just need to find the smallest value of $i$ for which $B_j(i)$ is negative. By Lemma \ref{Bilinear3.2} the negative terms are $B_j(d-j + 1 + 2t)$ for $t\geq 0$. Hence, $B_j(d - j + 1)$ is the smallest eigenvalue for a fixed $j$. 
\end{proof}

%----------------------------------------------------------------------
%-------------------------------Hermitian Forms Graphs-----------------
%----------------------------------------------------------------------
\section{Proofs of Conjecture \ref{conj1} and Conjecture \ref{conj2}}

 The term corresponds to an index $h$ such that $h > \min\{j,d-i\}$ in the above expression is zero. Therefore, we write the expression as 
\begin{align}\label{grassmanexpression}
Q_j(i) = \sum_{h=0}^{h_{\max}(i,j)}(-1)^j (-q)^{\binom{j-h}{2}+hd}{d-h \brack d-j}_b{d-i \brack h}_b
\end{align}
where $h_{\max}(i,j):= \min\{j,d-i\}$. Whenever $i,j$ are clear by the context we just write $h_{\max}$. Let us denote the $h$-th term of sum in this expression by $T_h(i,j)$. Whenever $i,j$ are clear by the context we just write $T_h$.

Let $q_{h}(i,j)$ (if $i,j$ are clear by context we just write $q_h$) be the exponent of $q$ in $|T_h(i,j)|$ if we approximate $|{n \brack k}_{-q}|$ with $\displaystyle{q^{k(n-k)}}$, that is,
\begin{align*}
q_{h}(i,j) &= (d-j)(j-h)+h(d-i-h)+\frac{(j-h)(j-h-1)}{2} +hd\\
&= -\frac{1}{2}h^2 + \left(d-i+\frac{1}{2}\right)h + j\left(d-\frac{(j+1)}{2}\right)
\end{align*}
Let $h_0 = d-i+\frac{1}{2}$. Then the quadratic expression $q_{h}$ is maximal for $h = h_0$, and $q_{h_0+x} = q_{h_0}-\frac{1}{2}x^2$.  The terms occurring in the sum have indices $h$ with $h \leq h_{\max} < h_0$, so the term with the largest index has the largest exponent. Note that, if $m$ is even, then $(1-q^{-m})q^m \leq |b^m -1| \leq q^m$. On the other hand, if $m$ is odd, then $q^m \leq |b^m-1| \leq  (1+q^{-m})q^m$. Thus, we obtain the following useful estimate (for $m >0$),
\begin{align}\label{estimate}
(1-q^{-m})q^m &\leq |b^m -1| \leq (1+q^{-m})q^m. 
\end{align}
We will use the following ratio of absolute values of consecutive terms,
\begin{align}\label{consecutive-ratio}
    \left|\frac{T_{h-1}(i,j)}{T_h(i,j)}\right| = \left|\frac{b^{j-h-d}(b^{d-h+1}-1)(b^h-1)}{(b^{j-h+1}-1)(b^{d-i-h+1}-1)} \right|
\end{align}
for any $h = 1,...,h_{\max}(i,j)$. Also, we will need that 
\begin{align}\label{useful-ratio}
    \left|\frac{T_{d-i-2}(i,j)}{T_{d-i}(i,j)}\right| = \left|\frac{b^{2(j-(d-i))+1-2d}(b^{i+2}-1)(b^{i+1}-1)(b^{d-i}-1)(b^{d-i-1}-1)}{(b^2-1)(b-1)(b^{j-(d-i)+2}-1)(b^{j-(d-i)+1}-1)} \right|
\end{align}
whenever $2\leq d-i \leq j$.

%--------------------------------------------
\begin{lemma}\label{lemma3}
Let $q\geq 2$ and $d\geq 6$. If $j=1$ and $1\leq i \leq d-1$, then $|Q_j(i)| > |Q_j(i+1)|$. Moreover, the sign of $Q_j(i)$ is same as that of $T_{h_{\max}}(i,j)$.
\end{lemma}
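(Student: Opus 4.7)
The plan is to exploit the fact that when $j=1$ the sum in (\ref{grassmanexpression}) has at most two terms: since $h_{\max}(i,1) = \min\{1, d-i\} = 1$ for $1 \leq i \leq d-1$, we have
$$Q_1(i) = T_0(i,1) + T_1(i,1) = -{d \brack d-1}_b - (-q)^d {d-i \brack 1}_b.$$
Each Gaussian coefficient here is a single fraction with denominator $b-1$, so the two terms telescope; a short manipulation with $b = -q$ yields the closed form
$$Q_1(i) = \frac{1 - b^{2d-i}}{b-1} = \frac{(-q)^{2d-i} - 1}{q+1}.$$
This identity is the heart of the proof; I would derive it first and then read off both conclusions of the lemma from it.

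For the monotonicity $|Q_1(i)| > |Q_1(i+1)|$, the closed form gives $(q+1)|Q_1(i)| = q^{2d-i} - (-1)^{2d-i}$, and the difference $(q+1)(|Q_1(i)| - |Q_1(i+1)|)$ simplifies to $q^{2d-i-1}(q-1) - 2(-1)^{2d-i}$. Since $d \geq 6$ and $i \leq d-1$, the exponent $2d-i-1$ is at least $d \geq 6$, so the power of $q$ dominates the $\pm 2$ easily for every $q \geq 2$. For the sign claim, a parallel telescoping gives
$$T_1(i,1) = \frac{(-q)^{2d-i} - (-q)^d}{q+1},$$
and in both this expression and the closed form for $Q_1(i)$ the leading term $(-q)^{2d-i}$ has absolute value $q^{2d-i} \geq q^{d+1}$, which for $q \geq 2$ and $d \geq 6$ dwarfs $q^d = |(-q)^d|$ and $1$ alike. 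Hence $T_1(i,1)$ and $Q_1(i)$ both inherit the sign of $(-q)^{2d-i}$ and therefore agree, establishing the second assertion.

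I do not anticipate a serious obstacle. The pitfall worth flagging is that one might try to invoke Lemma \ref{mainlemma} directly, in the style of Lemmas \ref{BoundsGrassman} and \ref{Bilinear3.2}; but in the Hermitian case $T_0$ and $T_1$ need not have opposite signs (for instance, $(d,i,q) = (3,1,2)$ already gives $T_0 = -3$ and $T_1 = -8$), so the alternating-series framework of Lemma \ref{mainlemma} is not directly available here. The explicit telescoping above bypasses this issue and, being essentially a one-line calculation, is shorter and cleaner than the general estimate machinery would be.
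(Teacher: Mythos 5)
Your proof is correct and follows essentially the same route as the paper: the paper's argument likewise rests on the closed form $Q_1(i)=\pm\,\dfrac{b^{2d-i}-1}{b-1}$, obtains $|Q_1(i)|>|Q_1(i+1)|$ by an elementary estimate (a ratio bound via (\ref{estimate}) rather than your exact difference computation), and reads the sign off the dominant power of $q$. Incidentally, your version of the closed form, $Q_1(i)=\frac{1-b^{2d-i}}{b-1}$, carries the correct sign (at $i=0$ it returns the positive valency), whereas the paper's displayed $\frac{b^{2d-i}-1}{b-1}$ and its assertion that the sign is $(-1)^{i+1}$ contain a harmless sign slip that affects neither the absolute-value estimate nor the conclusion that $Q_1(i)$ and $T_{h_{\max}}(i,1)$ have the same sign.
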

\begin{proof} For all $1\leq i \leq d$, we have $\displaystyle{Q_1(i)= \frac{
b^{2d-i}-1}{b-1}}$. Its sign is $(-1)^{i+1}$, same as that of $T_{h_{\max}}(i,j)$. Now, we use the estimate from (\ref{estimate}) to conclude that
\begin{align*}
\frac{|Q_1(i)|}{|Q_1(i+1)|} = \left|\frac{b^{2d-i}-1}{b^{2d-i-1}-1}\right|
\geq \frac{(1-q^{-(2d-i)})q}{(1+q^{-(2d-i-1)})} \geq \frac{(1-q^{-(d+1)})q}{(1+q^{-d})} \geq \frac{(1-q^{-7}) q}{1+q^{-6}} \geq\frac{127}{65} > 1. \mbox{\qedhere}
\end{align*}
\end{proof}
%-------------------------------------------

%-------------------------------------------
\begin{lemma}\label{terms_inc}
Let $q\geq 2$ and $d\geq 6$. If $j\geq 2$ and $1\leq i\leq d-3$, then the terms of $Q_j(i)$ increases in absolute value, that is, $|T_{h-1}(i,j)| < |T_h(i,j)|$ from $h = 1,...,h_{\max}(i,j)$.  
\end{lemma}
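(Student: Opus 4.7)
The plan is to prove $|T_{h-1}(i,j)|<|T_h(i,j)|$ for all $1\le h\le h_{\max}(i,j)$ by showing the ratio $|T_{h-1}(i,j)/T_h(i,j)|<1$. Starting from the closed form (\ref{consecutive-ratio}) and applying the two-sided estimate (\ref{estimate}) to each factor $|b^m-1|$ (upper bound $q^m(1+q^{-m})$ in the numerator, lower bound $q^m(1-q^{-m})$ in the denominator), cancellation of the $q$-exponents yields
\[
\left|\frac{T_{h-1}(i,j)}{T_h(i,j)}\right| \le q^{h+i-d-1}\cdot\frac{(1+q^{-(d-h+1)})(1+q^{-h})}{(1-q^{-(j-h+1)})(1-q^{-(d-i-h+1)})}.
\]
Since $h\le h_{\max}(i,j)\le d-i$, the leading factor is at most $q^{-1}$, so the difficulty lies in controlling the correction factor, and it increases as $h$ approaches $d-i$.

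I would split the argument according to the value of $d-i-h$. When $h\le d-i-2$, the leading factor is at most $q^{-3}\le 1/8$ while $d-h+1\ge i+3\ge 4$ and $d-i-h+1\ge 3$ bound the correction factor by a small constant, so $|T_{h-1}/T_h|<1$ follows immediately. When $h=d-i-1$ (possible only if $d-i-1\le j$), the hypothesis $i\le d-3$ forces $h\ge 2$; the estimates from (\ref{estimate}) together with $d-h+1=i+2\ge 3$ and $d-i-h+1=2$ bound the correction factor by $5/2$ and hence $|T_{h-1}/T_h|\le q^{-2}\cdot 5/2\le 5/8$.

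The delicate case is $h=d-i$, which occurs only when $d-i\le j$. Here the leading factor is only $q^{-1}$ and $d-i-h+1=1$, so $|b^{d-i-h+1}-1|=|b-1|=q+1=q(1+q^{-1})$ exactly. The key trick is to use this identity rather than the loose lower bound $q(1-q^{-1})$: doing so replaces a factor $(1-q^{-1})=1/2$ in the denominator of the correction (disastrous at $q=2$) by $(1+q^{-1})\ge 3/2$, shrinking the correction factor to at most $5/4$ and giving $|T_{h-1}/T_h|\le q^{-1}\cdot 5/4\le 5/8$. Within each of the three cases I also have to dispatch the boundary subcase $j-h+1=1$, i.e.\ $h=j$, in which a further denominator factor equals $|b-1|=q+1$; the same exact identity handles it, and the constraint $j\ge 2$ rules out the corner $h=j=1$.

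The main obstacle I expect is the case $h=d-i$ at $q=2$. A naive application of (\ref{estimate}) with the loose lower bound $q-1$ for $|b-1|$ inflates the correction factor to roughly $45/8$ and pushes the ratio above $1$; only by recognising that the exponent $m=1$ must be handled through the equality $|b-1|=q+1$ does the argument close at $q=2$. The hypotheses $d\ge 6$ and $1\le i\le d-3$ enter precisely to guarantee $d-h+1\ge 4$, $d-i\ge 3$, and $h\ge 2$ (respectively $h\ge 3$) in Cases $h=d-i-1$ and $h=d-i$, so that uniform bounds such as $(1+q^{-m})\le 5/4$ and $(1-q^{-m})\ge 3/4$ for $m\ge 2$ are always available.
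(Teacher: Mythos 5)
Your proposal is correct and follows essentially the same route as the paper: bound the consecutive ratio $|T_{h-1}/T_h|$ via (\ref{consecutive-ratio}) and the estimate (\ref{estimate}), treating the boundary indices where an exponent equals $1$ by the exact identity $|b-1|=q+1$, which is precisely how the paper's inequalities (\ref{eqn2}), (\ref{eqn4}) and (\ref{eqn6}) close the $q=2$ case. Your case split by the value of $d-i-h$ (rather than the paper's trichotomy $j\leq d-i-1$, $j=d-i$, $j\geq d-i+1$) is only a reorganization of the same estimates, and your numerical bounds check out.
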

\begin{proof}
We divide the proof into three parts depending on the values of $j$. First, suppose that $j \leq d-i-1$, so $h_{\max} = j$. By using (\ref{consecutive-ratio}) together with the estimate (\ref{estimate}) we obtain that
\begin{align}\label{eqn1}
    \left|\frac{T_{h-1}(i,j)}{T_{h}(i,j)}\right| \leq  \frac{(1+q^{-(d-h+1)})(1+q^{-h})q^{-(d-i-h+1)}}{(1-q^{-(j-h+1)})(1-q^{-(d-i-h+1)})}
\leq  \frac{(1+q^{-4})(1+q^{-1})q^{-3}}{(1-q^{-2})(1-q^{-3})} \leq \frac{17}{56} < 1
\end{align}
for all $1\leq h \leq j-1$, and $h =j$, that
\begin{align}\label{eqn2}
    \left|\frac{T_{j-1}(i,j)}{T_j(i,j)}\right| \leq  \frac{(1+q^{-(d-j+1)})(1+q^{-j})q^{-(d-i-j)}}{(1+q)(1-q^{-(d-i-j+1)})}
\leq  \frac{(1+q^{-3})(1+q^{-2})q^{-1}}{(1+q)(1-q^{-2})} \leq  \frac{5}{16} < 1.
\end{align}
Second, suppose that $j = d-i$, so $h_{\max} = j$. We use (\ref{consecutive-ratio}) together with the estimate (\ref{estimate}) to obtain that
\begin{align}\label{eqn3}
    \left|\frac{T_{h-1}(i,j)}{T_{h}(i,j)}\right| \leq  \frac{(1+q^{-(d-h+1)})(1+q^{-h})q^{-(j-h+1)}}{(1-q^{-(j-h+1)})(1-q^{-(j-h+1)})} \leq  \frac{(1+q^{-3})(1+q^{-1})q^{-2}}{(1-q^{-2})(1-q^{-2})} \leq  \frac{3}{4} < 1
\end{align}
for all $1\leq h \leq j-1$, and $h =j = d-i$, that
\begin{align}\label{eqn4}
    \left|\frac{T_{d-i-1}(i,j)}{T_{d-i}(i,j)}\right| \leq \frac{(1+q^{-(d-j+1)})(1+q^{-j})q}{(1+q)^2} \leq  \frac{(1+q^{-2})(1+q^{-3})q}{(1+q)^2} \leq  \frac{5}{16} <1.
\end{align}
Lastly, suppose that $j \geq d-i+1$, so $h_{\max} = d-i$. Again using (\ref{consecutive-ratio}) together with the estimate (\ref{estimate}), one gets that
\begin{align}\label{eqn5}
    \left|\frac{T_{h-1}(i,j)}{T_{h}(i,j)}\right| \leq  \frac{(1+q^{-(d-h+1)})(1+q^{-h})q^{-(d-i-h+1)}}{(1-q^{-(j-h+1)})(1-q^{-(d-i-h+1)})}
\leq  \frac{(1+q^{-3})(1+q^{-1})q^{-2}}{(1-q^{-3})(1-q^{-2})} \leq  \frac{9}{14} < 1
\end{align}
for all $1\leq h \leq d-i-1$, and $h =d-i$, that
\begin{align}\label{eqn6}
    \left|\frac{T_{d-i-1}(i,j)}{T_{d-i}(i,j)}\right| \leq  \frac{(1+q^{-(i+1)})(1+q^{-(d-i)})}{(1-q^{-(j-(d-i)+1)})(1+q)} \leq
    \frac{(1+q^{-2})(1+q^{-3})}{(1-q^{-2})(1+q)} \leq \frac{5}{8} < 1.
\end{align}
This show the assertion.
\end{proof}
%-------------------------------------------

%--------------------------------------------------------------------------
\begin{lemma}
Let $q\geq 2$ and $d\geq 6$. If $j \geq 2$ and $1\leq i \leq d-3$, then 
\begin{align}\label{upperbound}
|Q_j(i) - T_{h_{\max}}(i,j)| \leq \begin{cases}
|T_0| + |T_1| + ... + |T_{j-1}|, &\text{if } j \leq d-i-1\\
|T_{d-i-2}| + |T_{d-i-1}|, &\text{if } j\geq d-i.
\end{cases}
\end{align}
\end{lemma}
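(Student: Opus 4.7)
The bound splits into two disjoint cases. If $j \leq d-i-1$, then $h_{\max}(i,j) = j$, so
$$Q_j(i) - T_{h_{\max}}(i,j) = \sum_{h=0}^{j-1} T_h(i,j),$$
and the claimed bound is immediate from the triangle inequality. The substantive case is $j \geq d-i$, where $h_{\max}(i,j) = d-i$ and we need the sharper estimate $\bigl|\sum_{h=0}^{d-i-1} T_h(i,j)\bigr| \leq |T_{d-i-2}(i,j)| + |T_{d-i-1}(i,j)|$. Here the triangle inequality is too weak, since the ratios $|T_{h-1}/T_h|$ allowed by Lemma~\ref{terms_inc} are only bounded by $9/14$ in this regime, so we must extract genuine cancellation.

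The plan for case 2 is a sign analysis. Starting from the definition $T_h = (-1)^j (-q)^{\binom{j-h}{2}+hd}{d-h \brack d-j}_b {d-i \brack h}_b$ and using the base-$b$ ratio identities ${d-h-1 \brack d-j}_b / {d-h \brack d-j}_b = (b^{j-h}-1)/(b^{d-h}-1)$ and ${d-i \brack h+1}_b / {d-i \brack h}_b = (b^{d-i-h}-1)/(b^{h+1}-1)$, one obtains
$$\frac{T_{h+1}(i,j)}{T_h(i,j)} = (-q)^{d-j+h+1}\cdot \frac{(b^{j-h}-1)(b^{d-i-h}-1)}{(b^{d-h}-1)(b^{h+1}-1)}.$$
Since $b=-q<0$ gives $\operatorname{sign}(b^m-1)=(-1)^m$ for every $m\geq 1$, summing the five relevant exponents modulo $2$ yields $\operatorname{sign}(T_{h+1}(i,j)/T_h(i,j)) = (-1)^{h+d+i}$. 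Consequently, $T_h$ and $T_{h+1}$ share a sign precisely when $h\not\equiv d+i\pmod 2$, and the sign sequence follows the period-$4$ pattern $\ldots,+,+,-,-,+,+,\ldots$ up to a global sign.

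With the sign pattern in hand, I group the terms of $\sum_{h=0}^{d-i-1} T_h$ into maximal same-sign blocks. Checking the sign-change condition at $h=d-i-2$ (which gives $2d-2$, hence no change) and at $h=d-i-3$ (which gives $2d-3$, hence a change) shows that the topmost block of the partial sum is always $\{T_{d-i-2},T_{d-i-1}\}$, whose absolute value equals $|T_{d-i-2}|+|T_{d-i-1}|$ since the two entries share a sign. The lower blocks are the consecutive pairs $(T_{2k},T_{2k+1})$ when $d-i$ is even, or $\{T_0\}$ followed by the pairs $(T_{2k-1},T_{2k})$ when $d-i$ is odd. Adjacent blocks have opposite signs by construction, and Lemma~\ref{terms_inc} applied termwise in matching positions makes the sequence of block absolute values strictly increasing. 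Applying Lemma~\ref{mainlemma} to this grouped alternating series then delivers the desired bound. The main obstacle will be the sign computation $\operatorname{sign}(T_{h+1}/T_h)=(-1)^{h+d+i}$: it requires careful bookkeeping over the five sign contributions (one from the $(-q)$-power and four from $b^m-1$ factors) and verifying that the resulting exponent simplifies to $d+h+i$ modulo $2$.
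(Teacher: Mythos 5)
Your proof is correct and takes essentially the same route as the paper: the triangle inequality in the first case, and in the second case the period-four sign pattern (which the paper merely asserts and you derive from the ratio $T_{h+1}/T_h$), followed by grouping into same-sign blocks whose absolute values increase by Lemma \ref{terms_inc} and applying Lemma \ref{mainlemma} to the resulting alternating block series. One harmless slip: since $\operatorname{sign}(T_{h+1}/T_h)=(-1)^{h+d+i}$, consecutive terms share a sign exactly when $h\equiv d+i\pmod 2$, not $h\not\equiv d+i\pmod 2$; your concrete checks at $h=d-i-2$ (exponent $2d-2$, no change) and $h=d-i-3$ (exponent $2d-3$, change) already use the correct rule, so the argument is unaffected.
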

\begin{proof} If $j \leq d-i-1$, then $h_{\max} = j$. Thus, the first inequality in (\ref{upperbound}) follows by the triangle's inequality. If $h_{\max} = d-i$ and is odd, then signs of $T_h$ for $0\leq h \leq h_{\max}$ follow the pattern 
\begin{align*}
    &+ |T_0|, -|T_1|, - |T_2|, + |T_3|, + |T_4|, -|T_5|,  -|T_6|, ..., T_{d-i}, \text{ or}\\
    &- |T_0|, +|T_1|, + |T_2|, - |T_3|, - |T_4|, +|T_5|,  +|T_6|, ..., T_{d-i}.
\end{align*}
On the other hand, if $h_{\max} = d-i$ and is even, then signs of $T_h$ for $0\leq h \leq h_{\max}$ follow the pattern 
\begin{align*}
    &+ |T_0|, +|T_1|, - |T_2|, -|T_3|, +|T_4|, +|T_5|, - |T_6|,..., T_{d-i}, \text{ or}\\
    &- |T_0|, -|T_1|, + |T_2|, + |T_3|, - |T_4|, - |T_5|, + |T_6|,..., T_{d-i}.
\end{align*}
That means if $j \geq d-i$, then sign of $T_{d-i}$ is opposite to that of $T_{d-i-1}$ and $T_{d-i-2}$. Thus, the second inequality in (\ref{upperbound}) follows by the Lemma \ref{terms_inc}, Lemma \ref{mainlemma}, and the triangle's inequality. \end{proof}
%----------------------------------------

%----------------------------------------
\begin{lemma}\label{threeparts}
Let $q\geq 2$, $d\geq 6$, $j\geq 2$, and $1\leq i\leq d-3$.
\begin{enumerate}[(i)]
   \item If $j\leq d-i-1$, then $\displaystyle{43/78\cdot|T_{h_{\max}}(i,j)| \leq |Q_j(i)| \leq 113/78\cdot |T_{h_{\max}}(i,j)|}$.
   \item If $j= d-i$, then $\displaystyle{691/1296\cdot |T_{h_{\max}}(i,j)| \leq |Q_j(i)| \leq 1901/1296 \cdot|T_{h_{\max}}(i,j)|}$.
   \item If $j \geq d-i+1$, then $\displaystyle{31/216\cdot|T_{h_{\max}}(i,j)| \leq |Q_j(i)| \leq 401/216\cdot |T_{h_{\max}}(i,j)|}$.
\end{enumerate}
In particular, the sign of $Q_j(i)$ is same as that of $T_{h_{\max}}(i,j)$. 
\end{lemma}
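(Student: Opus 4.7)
The plan is to apply the triangle inequality to the bound (\ref{upperbound}) and reduce everything to a numerical estimate of
\[
\rho(i,j) \;:=\; \frac{|Q_j(i) - T_{h_{\max}}(i,j)|}{|T_{h_{\max}}(i,j)|}
\]
in each of the three cases. Once $\rho(i,j)<1$ is established with an explicit constant, the chain
\[
(1-\rho(i,j))\,|T_{h_{\max}}(i,j)| \;\leq\; |Q_j(i)| \;\leq\; (1+\rho(i,j))\,|T_{h_{\max}}(i,j)|
\]
yields both the numerical bounds and the sign assertion simultaneously.

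In case (i), with $j\leq d-i-1$, the bound (\ref{upperbound}) equals $|T_0|+\cdots+|T_{j-1}|$. The ratio estimates already established in (\ref{eqn1}) and (\ref{eqn2}) give $|T_{h-1}/T_h|\leq 17/56$ for $1\leq h\leq j-1$ and $|T_{j-1}/T_j|\leq 5/16$, so the sum is dominated by a geometric tail and
\[
\rho(i,j) \;\leq\; \frac{5/16}{1-17/56} \;=\; \frac{35}{78},
\]
producing precisely the bounds $43/78$ and $113/78$.

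In cases (ii) and (iii) the bound (\ref{upperbound}) is only the two-term sum $|T_{d-i-2}|+|T_{d-i-1}|$. For $|T_{d-i-1}/T_{d-i}|$ I would feed (\ref{consecutive-ratio}) into the estimate (\ref{estimate}), recovering the bounds already used in (\ref{eqn4}) and (\ref{eqn6}). For $|T_{d-i-2}/T_{d-i}|$, the key observation is that the sharp identity (\ref{useful-ratio}) must be used directly, rather than via two applications of (\ref{consecutive-ratio}); the latter telescoping bound already exceeds $1$ in case (iii) and so yields no information. Applying (\ref{estimate}) to (\ref{useful-ratio}) under the standing hypotheses $d\geq 6$ and $1\leq i\leq d-3$ (which force $i+1,d-i-1\geq 2$ and $i+2,d-i\geq 3$) controls each $(1\pm q^{-m})$ factor and produces the advertised constants $\rho(i,j)\leq 605/1296$ in case (ii) and $\rho(i,j)\leq 185/216$ in case (iii).

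The main obstacle is the careful bookkeeping of constants in cases (ii) and (iii): one has to use (\ref{useful-ratio}) for $|T_{d-i-2}/T_{d-i}|$ rather than a two-step telescoping via (\ref{consecutive-ratio}), and one must simultaneously take worst-case upper bounds in the numerator and lower bounds in the denominator of (\ref{useful-ratio}) so that the resulting constant falls below $1$. Once $\rho(i,j)<1$ is in place in each case, the strict positivity of $1-\rho(i,j)$ forces the sign of $Q_j(i)$ to coincide with that of $T_{h_{\max}}(i,j)$, giving the final assertion of the lemma.
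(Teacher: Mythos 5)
Your proposal is correct and follows essentially the same route as the paper's proof: triangle inequality on (\ref{upperbound}), a geometric series built from (\ref{eqn1})--(\ref{eqn2}) giving $35/78$ in case (i), and in cases (ii)--(iii) the direct use of (\ref{useful-ratio}) with (\ref{estimate}) combined with (\ref{eqn4}) resp.\ (\ref{eqn6}) to get $605/1296$ and $185/216$, with the sign statement following from these ratios being below $1$. Nothing further is needed.
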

\begin{proof}
\begin{enumerate}[(i)]
\item If $j \leq d-i-1$, then $h_{\max} =j$. We use (\ref{upperbound}) along with inequalities in (\ref{eqn1}) and (\ref{eqn2}) to get that
    \begin{align*}
        |Q_j(i) - T_j(i,j)| <  \left(\sum_{h\geq 0} \left(\frac{17}{56} \right)^h \right)|T_{j-1}(i,j)| \leq \left(\sum_{h\geq 0} \left(\frac{17}{56} \right)^h \right)\frac{5}{16}|T_j(i,j)| = \frac{35}{78}|T_j(i,j)|.
    \end{align*}
Thus, we obtain that $\displaystyle{43/78\cdot|T_{j}(i,j)| \leq |Q_j(i)| \leq 113/78\cdot |T_j(i,j)|}$.
\item If $j = d-i$, then $h_{\max}=d-i$. By using (\ref{useful-ratio}) together with the estimates before (\ref{estimate}) we obtain that
\begin{align*}
    \left|\frac{T_{d-i-2}(i,j)}{T_{d-i}(i,j)}\right| \leq  \frac{q^{i+2}(1+q^{-(i+1)})q^{d-i}(1+q^{-(d-i-1)})q^d}{q^{2d-1}(q^2-1)^2(q+1)^2} \leq \frac{(1+q^{-2})(1+q^{-2})q^3}{(q^2-1)^2(q+1)^2} \leq  \frac{25}{162}.
\end{align*}
This inequality together with (\ref{eqn4}) in second equation of (\ref{upperbound}) implies that
\begin{align*}
|Q_j(i) - T_{d-i}(i,j)|  \leq  \frac{25}{162} |T_{d-i}(i,j)| + \frac{5}{16}|T_{d-i}(i,j)| = \frac{605}{1296}|T_{d-i}(i,j)|.
\end{align*}
Thus, we conclude that $\displaystyle{691/1296\cdot |T_{d-i}(i,j)| \leq |Q_j(i)| \leq 1901/1296 \cdot|T_{d-i}(i,j)|}$.
\item If $j \geq d-i+1$, then $h_{\max}=d-i$. By using (\ref{useful-ratio}) together with the estimates before (\ref{estimate}) we obtain that
\begin{align*}
\left|\frac{T_{d-i-2}(i,j)}{T_{d-i}(i,j)}\right| \leq
\frac{(1+q^{-(i+1)})(1+q^{-(d-i-1)})}{(1-q^{-(j-(d-i)+1)})(1+q)(q^2-1)}\leq \frac{(1+q^{-2})(1+q^{-2})}{(1-q^{-2})(1+q)(q^2-1)} \leq  \frac{25}{108}.
\end{align*} 
This inequality together (\ref{eqn6}) in second equation of (\ref{upperbound}) implies that
\begin{align*}
|Q_j(i) - T_{d-i}(i,j)| \leq \frac{25}{108} |T_{d-i}(i,j)| + \frac{5}{8}|T_{d-i}(i,j)| = \frac{185}{216}|T_{d-i}(i,j)|.
\end{align*}
Thus, we conclude that $\displaystyle{31/216\cdot|T_{d-i}(i,j)| \leq |Q_j(i)| \leq 401/216\cdot |T_{d-i}(i,j)|}$. \qedhere
\end{enumerate} 
\end{proof}
%-------------------------------------------------------------------------

%--------------------------------------------
\begin{lemma}\label{lemma7}
Let $q\geq 2$ and $d\geq 6$. If $1\leq i \leq d-5$ and $j\geq 2$, or $i =d-4$ and $2\leq j \leq 4$ then $|Q_j(i)| > |Q_j(i+1)|$.
\end{lemma}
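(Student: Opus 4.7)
The plan is to combine Lemma~\ref{threeparts}, which sandwiches $|Q_j(i)|$ between constant multiples of its dominant term $|T_{h_{\max}}(i,j)|$, with a direct comparison of the dominant terms for $i$ and $i+1$. Writing $h := h_{\max}(i,j)$ and $h' := h_{\max}(i+1,j)$, it suffices to lower-bound the ratio $|T_h(i,j)|/|T_{h'}(i+1,j)|$ by enough to overcome the quotient of constants produced by Lemma~\ref{threeparts}.

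I would split into four cases according to the position of $j$ relative to $d-i$: (a) $j \leq d-i-2$, with both $i$ and $i+1$ in Lemma~\ref{threeparts}(i) and $h = h' = j$; (b) $j = d-i-1$, with $i$ in (i), $i+1$ in (ii), and $h = h' = j$; (c) $j = d-i$, with $i$ in (ii), $i+1$ in (iii), $h = d-i$, $h' = d-i-1$; and (d) $j \geq d-i+1$, with both in (iii) and the same $h,h'$ as in (c). Observe that when $i = d-4$ and $2\leq j\leq 4$, only cases (a)--(c) occur; case (d) is avoided. In cases (a), (b), and (d), the two dominant terms differ only in a single Gaussian binomial, and the identity ${n \brack k}_b = \frac{b^n-1}{b^{n-k}-1}{n-1 \brack k}_b$ telescopes the ratio to $|b^A-1|/|b^B-1|$ times a power of $q$; in (c) a direct evaluation gives $|T_h(i,j)|/|T_{h'}(i+1,j)| = q^d(q+1)/|b^{i+1}-1|$.

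Applying the estimate (\ref{estimate}) then yields in each case a lower bound of the form $C\cdot q^s$ with $C$ bounded away from zero thanks to $d-i \geq 5$ (or the boundary analysis for $i=d-4$), and with $s = j$ in cases (a) and (b) and $s = d-i$ in (c) and (d). Multiplying by the corresponding Lemma~\ref{threeparts} constants---$43/113$ for (a), $(43\cdot 1296)/(78\cdot 1901)$ for (b), $691/2406$ for (c), and $31/401$ for (d)---would yield $|Q_j(i)|/|Q_j(i+1)| > 1$ in all four cases. The main obstacle is case (d): Lemma~\ref{threeparts}(iii) supplies only the coarse constants $31/216$ and $401/216$, so the required bound reduces essentially to $(31/401)\,q^{d-i}\,(1-q^{-2})/(1+q^{-2}) > 1$, which barely holds at $q=2$ and $d-i = 5$. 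This thin margin is precisely why the hypothesis restricts $i \leq d-5$, and why for $i = d-4$ one further restricts $j \leq 4$ so as to exclude case (d) there; extending the lemma beyond these ranges would require sharper estimates on $|Q_j(i)|$ than those in Lemma~\ref{threeparts}.
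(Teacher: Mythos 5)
Your plan is essentially the paper's own proof: the same four-case split according to whether $j\le d-i-2$, $j=d-i-1$, $j=d-i$, or $j\ge d-i+1$, the same single-Gaussian-coefficient ratios of dominant terms bounded via (\ref{estimate}) (e.g.\ $|(b-1)b^d/(b^{i+1}-1)|$ in the case $j=d-i$), and the same multiplication by the constants from Lemma~\ref{threeparts}, with the tight case $j\ge d-i+1$ giving $\tfrac{31}{401}\cdot\tfrac{96}{5}>1$ exactly as you indicate, which is why the hypotheses force $i\le d-5$ (or $j\le 4$ when $i=d-4$). The proposal is correct and matches the paper's argument.
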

\begin{proof}
Let $h := h_{\max}(i,j)$ and $h' := h_{\max}(i+1,j)$. We divide the proof into four parts depending on the values of $j$. First, if $j \leq d-i-2$, then $h = j$ and $h'=j$. By using the estimate from (\ref{estimate}) we obtain that 
\begin{align*}
\frac{|T_h(i,j)|}{|T_{h'}(i+1,j)|} =  \left|\frac{b^{d-i}-1}{b^{d-i-j} -1}\right| \geq \frac{(1-q^{-(d-i)})q^j}{1+q^{-(d-i-j)}} \geq \frac{(1-q^{-4})q^2}{1+q^{-2}} \geq 3.
\end{align*}
This inequality together with Lemma \ref{threeparts} (i) imply that 
$$\frac{|Q_j(i)|}{|Q_j(i+1)|} \geq \frac{43}{113}\frac{|T_h(i,j)|}{|T_{h'}(i+1,j)|} \geq \frac{43}{113}\cdot 3 >1.$$
Second, if $j = d-i-1$, then $h=j$ and $h' = j$. By using the estimate from (\ref{estimate}) we obtain that 
\begin{align*}
\frac{|T_h(i,j)|}{|T_{h'}(i+1,j)|} = \left|\frac{b^{d-i}-1}{b^{d-i-j} -1}\right| \geq \frac{(1-q^{-(d-i)})q^j}{1+q^{-(d-i-j)}} \geq \frac{(1-q^{-4})q^3}{1+q^{-1}} \geq 5.
\end{align*}
This inequality together with Lemma \ref{threeparts} (i) and (ii) imply that 
$$\frac{|Q_j(i)|}{|Q_j(i+1)|} \geq \frac{43/78}{1901/1296}\frac{|T_h(i,j)|}{|T_{h'}(i+1,j)|} \geq \frac{43/78}{1901/1296}\cdot 5 > 1.$$
Third, if $j = d-i$, then $h=d-i$ and $h' = d-i-1$. By using the estimate from (\ref{estimate}) we obtain that 
\begin{align*}
\frac{|T_{h}(i,j)|}{|T_{h'}(i+1,j)|} = \left|\frac{(b-1)b^{d}}{b^{i+1} -1}\right| \geq \frac{(1+q)q^{d-i-1}}{1+q^{-(i+1)}} \geq \frac{(1+q)q^3}{1+q^{-2}} \geq \frac{96}{5}.
\end{align*}
This inequality together with Lemma \ref{threeparts} (ii) and (iii) imply that 
$$\frac{|Q_j(i)|}{|Q_j(i+1)|} \geq \frac{691/1296}{401/216}\frac{|T_{h}(i,j)|}{|T_{h'}(i+1,j)|} \geq \frac{691/1296}{401/216}\cdot \frac{96}{5} > 1.$$
Lastly, if $j \geq d-i+1$, then $h=d-i$ and $h' = d-i-1$. By using the estimate from (\ref{estimate}) we obtain that 
\begin{align*}
\frac{|T_{h}(i,j)|}{|T_{h'}(i+1,j)|} = \left|\frac{(b^{i+j-d+1}-1)b^{-i-j+2d}}{b^{i+1} -1}\right|
 \geq \frac{(1-q^{-(j-(d-i)+1)})q^{d-i}}{1+q^{-(i+1)}} \geq \frac{(1-q^{-2})q^5}{1+q^{-2}} \geq \frac{96}{5}.
\end{align*}
This inequality together with Lemma \ref{threeparts} (iii) imply that 
\begin{align*}
\frac{|Q_j(i)|}{|Q_j(i+1)|} \geq \frac{31}{401}\frac{|T_{h}(i,j)|}{|T_{h'}(i+1,j)|} \geq \frac{31}{401}\cdot\frac{96}{5} > 1. \tag*{\qedhere} 
\end{align*}
\end{proof}
%--------------------------------------------

%--------------------------------------------
\begin{lemma}\label{lemma8}
Let $q\geq 2$ and $d\geq 6$. If $j \geq 2$ and $i = d-1$, then $|Q_j(i)| > |Q_j(i+1)| $ except if $q=2$ and $j$ is equal to $d$ or is even. However, if $q=2$ and $j$ is equal to $d$ or is even, then $|Q_j(d-2)| > |Q_j(d)|$. 
\end{lemma}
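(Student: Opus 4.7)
The plan is to exploit that $Q_j(d-1)$ and $Q_j(d)$ involve very few terms in the sum (\ref{grassmanexpression}): for $j \geq 2$, one has $h_{\max}(d-1,j) = 1$ and $h_{\max}(d,j) = 0$, so $Q_j(d) = T_0(d,j)$ and $Q_j(d-1) = T_0(d-1,j) + T_1(d-1,j)$. Since $T_0(i,j) = (-1)^j(-q)^{\binom{j}{2}}{d \brack d-j}_b$ does not depend on $i$, one gets the exact identity $Q_j(d-1) - Q_j(d) = T_1(d-1,j)$, and the desired inequality reduces to controlling the sign and magnitude of the single ratio $T_1(d-1,j)/T_0(d-1,j)$.

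Using the Gaussian identity ${d \brack d-j}_b = \frac{b^d - 1}{b^j - 1}{d-1 \brack d-j}_b$, a direct computation gives
$$\frac{T_1(d-1,j)}{T_0(d-1,j)} = (-q)^{d-j+1}\cdot\frac{(-q)^j - 1}{(-q)^d - 1}.$$
A short case check over the four parities of $(j,d)$ shows this ratio is always negative, so $T_0$ and $T_1$ partially cancel and $|Q_j(d-1)| > |Q_j(d)|$ is equivalent to $|T_1/T_0| > 2$. Writing $|T_1/T_0| = (q^{d+1} + \sigma_j q^{d-j+1})/(q^d + \sigma_d)$, where $\sigma_m = -1$ if $m$ is even and $+1$ if $m$ is odd, one verifies this strictly exceeds $2$ whenever $q \geq 3$, or when $q = 2$ with $j$ odd and $j < d$. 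In the remaining cases ($q = 2$ with $j = d$ or with $j$ even and $j < d$), the ratio equals $2$ or lies in $(1,2)$ respectively, yielding the stated exceptions.

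For the exceptional case, I switch to comparing $|Q_j(d-2)|$ with $|Q_j(d)|$. Since $h_{\max}(d-2,j) = 2$, one has $Q_j(d-2) = T_0(d-2,j) + T_1(d-2,j) + T_2(d-2,j)$ with $T_0(d-2,j) = Q_j(d)$. Using ${d-1 \brack d-j}_b = \frac{b^{d-1}-1}{b^{j-1}-1}{d-2 \brack d-j}_b$, together with ${2 \brack 1}_b = 1 - q$ and the sharp formula $|b^m - 1| = q^m + 1$ for odd $m$ and $q^m - 1$ for even $m$, I would establish the ratio bounds $|T_1(d-2,j)/T_0(d-2,j)| > 1$ and $|T_2(d-2,j)/T_1(d-2,j)| > 3$ for $q = 2$ and $d \geq 6$ in the relevant range of $j$. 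The reverse triangle inequality then gives
$$|Q_j(d-2)| \geq |T_2| - |T_1| - |T_0| = |T_1|(|T_2/T_1| - 1) - |T_0| > 2|T_1| - |T_0| > |T_0| = |Q_j(d)|,$$
completing the proof.

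The main obstacle I anticipate is the need for the sharp form $|b^m - 1| = q^m \pm 1$ rather than the looser estimate (\ref{estimate}). Since $|T_1/T_0|$ equals exactly $2$ at $j = d$ in both parity regimes, a routine application of the generic estimate cannot separate the strict from the non-strict cases, so a careful parity-by-parity analysis on $j$ and $d$ is unavoidable.
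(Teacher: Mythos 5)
Your proposal is correct and follows essentially the same route as the paper: the ratio you compute, $(-q)^{d-j+1}\frac{(-q)^j-1}{(-q)^d-1}$, is exactly the paper's $\frac{b^d(b^j-1)}{b^{j-1}(b^d-1)}$, and both arguments show it is negative and compare its absolute value with $2$, isolating precisely the cases $q=2$ with $j=d$ (ratio exactly $2$) or $j$ even (ratio in $(1,2)$). Your exceptional-case bounds $|T_1/T_0|>1$ and $|T_2/T_1|>3$ do hold (at $q=2$ the factor $|b+1|=1$ makes these the same two ratios the paper bounds by $\tfrac{96}{65}$ and $4$), so your reverse-triangle-inequality step is just a repackaging of the paper's estimate $|Q_j(d-2)/Q_j(d)|=\left|1+R(R'-1)\right|\ge \tfrac{96}{65}\cdot 3-1>1$.
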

\begin{proof} We use the estimate from (\ref{estimate}). If $q\geq 3$, then
$$
\displaystyle{\left|\frac{b^d(b^j-1)}{b^{j-1}(b^d-1)} \right| \geq \frac{(1-q^{-j})q}{(1+q^{-d})} \geq \frac{(1-q^{-2})q}{(1+q^{-6})} \geq \frac{972}{365} > 2}.
$$
If $q= 2$, $j \leq d-1$, and $j$ is odd, then
$$
\displaystyle{\left|\frac{b^d(b^j-1)}{b^{j-1}(b^d-1)} \right| \geq \frac{(1+q^{-j})q}{(1+q^{-d})} \geq \frac{(1+q^{-(d-1)})q}{(1+q^{-d})} > 2}.
$$
Since sign of $\displaystyle{\frac{b^d(b^j-1)}{b^{j-1}(b^d-1)}}$ is $-1$, so 
$\displaystyle{\frac{|Q_j(d-1)|}{|Q_j(d)|} = \left|\frac{b^d(b^j-1)}{b^{j-1}(b^d-1)} + 1\right| \geq \left|\frac{b^d(b^j-1)}{b^{j-1}(b^d-1)}\right| - 1 > 1}$
except if $q=2$ and $j$ is equal to $d$ or is even. 

If $q= 2$ and $j$ is equal to $d$ or is even, then
$$
\displaystyle{\left|\frac{b^d(b^j-1)}{b^{j-1}(b^d-1)} \right| \geq \frac{(1-q^{-j})q}{(1+q^{-d})} \geq \frac{96}{65}}, \text{ and }\displaystyle{\left|\frac{b^d(b^{j-1}-1)}{b^{j-2}(b^{d-1}-1)} \right| \geq \frac{(1+q^{-(j-1)})q^2}{(1+q^{-(d-1)})} \geq q^2 = 4.}
$$
Thus, we obtain 
\begin{align*}
\frac{|Q_j(d-2)|}{|Q_j(d)|} = \left|1 + \frac{b^d(b^j-1)}{b^{j-1}(b^d-1)}\left( \frac{b^d(b^{j-1}-1)}{b^{j-2}(b^{d-1}-1)}-1\right)\right| \geq \frac{96}{65}\cdot (4-1) - 1 > 1. \tag*{\qedhere} 
\end{align*}
\end{proof}
%---------------------------------------------

%---------------------------------------------
\begin{lemma}\label{lemma9}
Let $q\geq 2$ and $d\geq 6$. If $j \geq 2$ and $i = d-2$, then $|Q_j(i)| > |Q_j(i+1)|$. Moreover, if $i = d-2,\ d-1$, then the sign of $Q_j(i)$ is same as that of $T_{h_{\max}}(i,j)$.
\end{lemma}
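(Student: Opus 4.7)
The plan is to reduce the entire statement to one clean inequality by exploiting algebraic coincidences between the terms of $Q_j(d-2)$ and $Q_j(d-1)$. Note first that $h_{\max}(d-2,j)=2$ and $h_{\max}(d-1,j)=1$ for $j\geq 2$, so $Q_j(d-2)$ has three nonzero terms $T_0,T_1,T_2$ while $Q_j(d-1)$ has two. Reading off the definition of $T_h(i,j)$, one immediately sees that $T_0(d-2,j) = T_0(d-1,j)$ (both $[{1 \brack 0}]_b$ and $[{2 \brack 0}]_b$ equal $1$), and that $T_1(d-2,j) = (1-q)\,T_1(d-1,j)$, since ${2 \brack 1}_b = b+1 = 1-q$ while ${1 \brack 1}_b = 1$. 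The sign recurrence $s_h - s_{h-1} \equiv h + d - i + 1 \pmod{2}$ for the sign of $T_h(i,j)$ (a direct consequence of the formula for $T_h$) then fixes the pattern as opposite for the two terms when $i=d-1$, and as $+,+,-$ (up to a global sign) for the three terms when $i=d-2$.

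Introducing $a := |T_0(d-1,j)|$, $b := |T_1(d-1,j)|$, $c := |T_2(d-2,j)|$, these identities yield
\[
Q_j(d-1) = s(a-b),\qquad Q_j(d-2) = s\bigl(a + (q-1)b - c\bigr)
\]
for some common $s\in\{\pm 1\}$. Everything we need to prove follows from the two inequalities $a<b$ and $c>qb$: the second implies $a+(q-1)b < qb < c$, which pins down the sign of $Q_j(d-2)$ as that of $T_2 = T_{h_{\max}(d-2,j)}$; the first pins down the sign of $Q_j(d-1)$ as that of $T_1 = T_{h_{\max}(d-1,j)}$; and then $|Q_j(d-2)| - |Q_j(d-1)| = (c - a - (q-1)b) - (b-a) = c - qb > 0$.

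The inequality $a < b$ is quick: from (\ref{consecutive-ratio}) with $h=1$, $i=d-1$, the factor $b-1$ cancels, leaving $a/b = q^{j-1-d}|b^d-1|/|b^j-1|$, and (\ref{estimate}) bounds this by $(1+q^{-d})/(q(1-q^{-j})) \leq 65/96$ throughout the range $q\geq 2$, $j\geq 2$, $d\geq 6$. The main obstacle is the inequality $c > qb$. A short cancellation gives $T_2(d-2,j)/T_1(d-1,j) = b^{d-j+2}(b^{j-1}-1)/(b^{d-1}-1)$, so $c/b = q^{d-j+2}|b^{j-1}-1|/|b^{d-1}-1|$. For $j\geq 3$ the standard lower bound $|b^{j-1}-1| \geq q^{j-1}-1$ suffices, yielding $c/b \geq q^2(1-q^{-(j-1)})/(1+q^{-(d-1)}) \geq q^2(1-q^{-2})/(1+q^{-5})$, which one verifies exceeds $q$ for all $q\geq 2$, $d\geq 6$. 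For $j=2$ the estimate (\ref{estimate}) is loose (since $|b-1| = q+1$ attains its upper end), so I would instead plug in the exact value $|b-1|=q+1$ to obtain $c/b \geq q(q+1)/(1+q^{1-d}) > q$ directly. Both cases give $c > qb$, closing the argument.
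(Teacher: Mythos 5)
Your proposal is correct, but it takes a genuinely different route from the paper's. The paper splits into the cases $j=2$, $j=3$ and $j\geq 4$, uses (\ref{consecutive-ratio}), (\ref{useful-ratio}) and (\ref{estimate}) to sandwich $|Q_j(d-2)|$ and $|Q_j(d-1)|$ between explicit constant multiples of their respective leading terms (e.g.\ $\tfrac{137}{192}|T_2(d-2,j)|\leq |Q_j(d-2)|\leq \tfrac{247}{192}|T_2(d-2,j)|$ when $j=2$), which yields the sign statement, and then compares the leading terms via $|T_2(d-2,j)|/|T_1(d-1,j)|$. You instead exploit the exact coincidences $T_0(d-2,j)=T_0(d-1,j)$ and $T_1(d-2,j)=(1-q)\,T_1(d-1,j)$ together with the sign-change parity $h+d-i+1$, which collapses the monotonicity and both sign claims to the two inequalities $|T_0(d-1,j)|<|T_1(d-1,j)|$ and $|T_2(d-2,j)|>q\,|T_1(d-1,j)|$, and even gives the exact identity $|Q_j(d-2)|-|Q_j(d-1)|=|T_2(d-2,j)|-q\,|T_1(d-1,j)|$. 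I checked the ingredients: the parity computation, the cancellation $T_2(d-2,j)/T_1(d-1,j)=(-q)^{d-j+2}\bigl((-q)^{j-1}-1\bigr)/\bigl((-q)^{d-1}-1\bigr)$ (which matches the ratios appearing in the paper's three cases), the bound $|T_0(d-1,j)|/|T_1(d-1,j)|\leq (1+q^{-d})/\bigl(q(1-q^{-j})\bigr)\leq 65/96$, and the two lower bounds giving the second inequality — including your substitution of the exact value $q+1$ for $|(-q)-1|$ at $j=2$, where the generic lower bound from (\ref{estimate}) would indeed fail at $q=2$ — and all are correct. Your approach buys a near-elimination of the case analysis and sharper information about the difference; the paper's approach buys uniformity with the adjacent Lemmas \ref{lemma10} and \ref{lemma11}, which follow the same template and where no such term-by-term coincidences between the two expansions are available. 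One cosmetic point: you use the symbol $b$ both for the base $-q$ and for $|T_1(d-1,j)|$; rename the latter before writing this up.
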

\begin{proof} Since $h_{\max}(i,j) = 2$ and $h_{\max}(i+1,j) = 1$, thus we have $\left|Q_j(i) - T_2(i,j)\right| = |T_0(i,j) + T_1(i,j)| \leq |T_0(i,j)| + |T_1(i,j)|$ and  $\left|Q_j(i+1) - T_1(i+1,j)\right| = |T_0(i+1,j)|$. We break the proof into three parts depending on the value of $j$. First, suppose that $j=2$. We use (\ref{consecutive-ratio}) and (\ref{useful-ratio}) together with the estimate from (\ref{estimate}) to obtain the following inequalities: $\displaystyle{\frac{|T_0(i,j)|}{|T_2(i,j)|} \leq \frac{(1+q^{-(d-1)})}{(q^2-1)(q+1)} \leq \frac{11}{96}}$, 
$\displaystyle{\frac{|T_1(i,j)|}{|T_2(i,j)|} \leq \frac{(1+q^{-(d-1)})(q^2-1)}{(1+q)^2q} \leq \frac{11}{64}}$, and
$\displaystyle{\frac{|T_0(i+1,j)|}{|T_1(i+1,j)|} \leq \frac{(1+q^{-d})q}{(q^2-1)}  \leq \frac{65}{96}}$.
Thus, by using the above three inequalities we obtain that
$$
\frac{137}{192}|T_2(i,j)| \leq |Q_j(i)| \leq \frac{247}{192}|T_2(i,j)| \text{ and } \frac{31}{96}|T_1(i+1,j)| \leq |Q_j(i+1)| \leq \frac{161}{96}|T_1(i+1,j)|.
$$
This shows the second assertion for $j=2$. By using the estimate from (\ref{estimate}) again we obtain that
$\displaystyle{\frac{|T_2(i,j)|}{|T_1(i+1,j)|} = \left|\frac{(b-1)b^d}{b^{d-1}-1} \right| \geq \frac{(1+q)q}{(1+q^{-(d-1)})} \geq \frac{64}{11}}.$
So,
$$
\frac{|Q_j(i)|}{|Q_j(i+1)|} \geq \frac{137/192|T_2(i,j)|}{161/96|T_1(i+1,j)|} \geq \frac{137/192}{161/96}\cdot \frac{64}{11} = \frac{4384}{1771} >1. 
$$
Second, suppose that $j=3$. We use (\ref{consecutive-ratio}) and (\ref{useful-ratio}) together with the estimate from (\ref{estimate}) to obtain the following inequalities: $\displaystyle{\frac{|T_0(i,j)|}{|T_2(i,j)|} \leq \frac{(1+q^{-(d-1)})q^2}{(q^3+1)(q^2-1)} \leq \frac{11}{72}}$, 
$\displaystyle{\frac{|T_1(i,j)|}{|T_2(i,j)|} \leq \frac{(1+q^{-(d-1)})}{(1+q)} \leq \frac{11}{32}}$, and
$\displaystyle{\frac{|T_0(i+1,j)|}{|T_1(i+1,j)|} \leq \frac{(1+q^{-d})q^2}{(1+q^3)}  \leq \frac{65}{144}}$.
Thus, by using the above three inequalities we obtain that
$$
\frac{145}{288}|T_2(i,j)| \leq |Q_j(i)| \leq \frac{431}{288}|T_2(i,j)| \text{ and } \frac{79}{144}|T_1(i+1,j)| \leq |Q_j(i+1)| \leq \frac{209}{144}|T_1(i+1,j)|.
$$
This shows the second assertion for $j=3$. By using the estimate from (\ref{estimate}) again we obtain that
$\displaystyle{\frac{|T_2(i,j)|}{|T_1(i+1,j)|} = \left|\frac{(b^2-1)b^{d-1}}{(b^{d-1}-1)} \right| \geq \frac{(q^2-1)}{(1+q^{-(d-1)})} \geq \frac{32}{11}}.$
So,
$$
\frac{|Q_j(i)|}{|Q_j(i+1)|} \geq \frac{145/288|T_2(i,j)|}{209/144|T_1(i+1,j)|} \geq \frac{145/288}{209/144}\cdot \frac{32}{11} = \frac{2320}{2299} >1. 
$$
Third, suppose that $j \geq 4$. Use (\ref{consecutive-ratio}) and (\ref{useful-ratio}) together with the estimate from (\ref{estimate}) to obtain the following inequalities: $\displaystyle{\frac{|T_0(i,j)|}{|T_2(i,j)|} \leq \frac{(1+q^{-(d-1)})}{(1-q^{-(j-1)})q^3} \leq \frac{33}{224}}$, 
$\displaystyle{\frac{|T_1(i,j)|}{|T_2(i,j)|} \leq \frac{(1+q^{-(d-1)})(q^2-1)}{(1-q^{-(j-1)})(1+q)q^2} \leq \frac{33}{112}}$, and
$\displaystyle{\frac{|T_0(i+1,j)|}{|T_1(i+1,j)|} \leq \frac{(1+q^{-d})}{(1-q^{-j})q}  \leq \frac{13}{24}}$.
Thus, by using the above three inequalities we obtain that
$$
\frac{125}{224}|T_2(i,j)| \leq |Q_j(i)| \leq \frac{323}{224}|T_2(i,j)| \text{ and } \frac{11}{24}|T_1(i+1,j)| \leq |Q_j(i+1)| \leq \frac{37}{24}|T_1(i+1,j)|.
$$
This shows the second assertion for $j\geq 4$. By using the estimate from (\ref{estimate}) again we obtain that
$\displaystyle{\frac{|T_2(i,j)|}{|T_1(i+1,j)|} = \left|\frac{b^d(b^{j-1}-1)}{b^{j-2}(b^{d-1}-1)} \right| \geq \frac{q^2(1-q^{-(j-1)})}{(1+q^{-(d-1)})} \geq \frac{112}{33}}.$
So,
\begin{align*}
\frac{|Q_j(i)|}{|Q_j(i+1)|} \geq \frac{125/224|T_2(i,j)|}{37/24|T_1(i+1,j)|} \geq \frac{125/224}{37/24}\cdot \frac{112}{33} = \frac{500}{407} >1. \tag*{\qedhere} 
\end{align*}
\end{proof}
%------------------------------------------

%------------------------------------------
\begin{lemma}\label{lemma10}
Let $q \geq 2$ and $d\geq 6$. If $j\geq 2$ and $i = d-3$, then $ |Q_j(i)| > |Q_j(i+1)|$. 
\end{lemma}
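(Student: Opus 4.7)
The plan is to mirror the proof of Lemma \ref{lemma9}, splitting into the subcases $j=2$, $j=3$, and $j\geq 4$ according to the values $h_{\max}(d-3,j)=\min\{j,3\}$ and $h_{\max}(d-2,j)=\min\{j,2\}$. In each subcase I will sandwich $|Q_j(d-3)|$ and $|Q_j(d-2)|$ between explicit multiples of the dominant terms $|T_{h_{\max}}|$, compute the ratio $|T_{h_{\max}}(d-3,j)|/|T_{h_{\max}}(d-2,j)|$ from the estimate (\ref{estimate}), and multiply.

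For $j=2$ both maxima equal $2$; I will apply Lemma \ref{threeparts}(i) at $i=d-3$ and reuse the intermediate $j=2$ bounds $\frac{137}{192}\le |Q_2(d-2)|/|T_2(d-2,2)|\le \frac{247}{192}$ already established in the proof of Lemma \ref{lemma9}. The dominant-term ratio simplifies to $|T_2(d-3,2)|/|T_2(d-2,2)| = |{3\brack 2}_b| = (q^3+1)/(q+1)\ge 3$ for $q\ge 2$, which is enough to conclude. For $j=3$, we have $h_{\max}(d-3,3)=3=d-(d-3)$, so Lemma \ref{threeparts}(ii) controls $|Q_3(d-3)|$; together with the $j=3$ subcase of Lemma \ref{lemma9} and the direct estimate $|T_3(d-3,3)|/|T_2(d-2,3)| \ge q^2(q+1)/(1+q^{-(d-2)}) \ge 192/17$, this closes the case.

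The real obstacle is $j\ge 4$. The clean lower bound $|T_3(d-3,j)|/|T_2(d-2,j)| \ge q^3(1-q^{-(j-2)})/(1+q^{-(d-2)}) \ge 96/17$ and the $j\ge 4$ upper bound $|Q_j(d-2)|/|T_2(d-2,j)| \le 323/224$ from Lemma \ref{lemma9} are routine, but the lower bound $31/216$ on $|Q_j(d-3)|/|T_3(d-3,j)|$ supplied by Lemma \ref{threeparts}(iii) is too weak: the product $(31/216)\cdot(96/17)/(323/224)$ falls below $1$. The hard part is therefore sharpening this estimate, and the key idea is to exploit the sign pattern. Since $d-i=3$ is odd, the four terms $T_0,T_1,T_2,T_3$ follow the pattern $(+,-,-,+)$ up to an overall sign (as described just before Lemma \ref{threeparts}), so $|Q_j(d-3)| = |T_3|+|T_0|-|T_1|-|T_2| \ge |T_3|-|T_1|-|T_2|$. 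I will then bound $|T_2/T_3|\le 17/32$ directly from (\ref{consecutive-ratio}) with $h=3$, and $|T_1/T_3|\le 561/3584$ from (\ref{useful-ratio}) with $d-i=3$, uniformly for $q\ge 2$, $d\ge 6$, $j\ge 4$ (the worst case is $q=2$, $d=6$, $j=4$). This yields $|Q_j(d-3)|/|T_3(d-3,j)| \ge 1119/3584$, and multiplying by $96/17$ and dividing by $323/224$ produces a quantity exceeding $1$, completing the argument.
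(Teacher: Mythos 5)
Your proposal is correct and takes essentially the same route as the paper's proof: the same split into $j=2$, $j=3$, $j\geq 4$, the same sandwiching of $|Q_j(d-3)|$ and $|Q_j(d-2)|$ by their dominant terms via (\ref{consecutive-ratio}), (\ref{useful-ratio}) and (\ref{estimate}), the same dominant-term ratios ($\geq 3$, $192/17$, $96/17$), and your ``key idea'' for $j\geq 4$ --- using the sign pattern to get $|Q_j(d-3)|\geq |T_3|-|T_1|-|T_2|$ and specializing the term ratios to $d-i=3$ --- is precisely what the paper does, just with slightly different constants ($17/128$ and $43/128$ where you get $561/3584$ and $1119/3584$). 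Your arithmetic checks out, e.g.\ $\frac{1119}{3584}\cdot\frac{96}{17}\cdot\frac{224}{323}=\frac{6714}{5491}>1$, and the coarser bounds you recycle from Lemma \ref{threeparts} and the proof of Lemma \ref{lemma9} for $j=2,3$ also clear $1$, so the proof is complete.
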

\begin{proof} We break the proof into three parts depending on the value of $j$. First, suppose that $j=2$, so $h_{\max}(i,j) = 2$ and $h_{\max}(i+1,j) =2$. We use (\ref{grassmanexpression}), (\ref{consecutive-ratio}) and (\ref{useful-ratio}) together with the estimate (\ref{estimate}) to obtain the following inequalities: $\displaystyle{\frac{|T_0(i,j)|}{|T_2(i,j)|} \leq \frac{(1+q^{-(d-1)})}{(q^3+1)(q^2-1)} \leq \frac{11}{288}}$, $\displaystyle{\frac{|T_1(i,j)|}{|T_2(i,j)|} \leq \frac{(1+q^{-(d-1)})}{(1+q)q} \leq \frac{11}{64}}$, $\displaystyle{\frac{|T_0(i+1,j)|}{|T_2(i+1,j)|} \leq \frac{(1+q^{-(d-1)})}{(q^2-1)(1+q)} \leq \frac{11}{96}}$, and $\displaystyle{\frac{|T_1(i+1,j)|}{|T_2(i+1,j)|} \leq \frac{(1+q^{-(d-1)})(q^2-1)}{(1+q)(1+q)q} \leq \frac{11}{64}}$. By using these inequalities we obtain that
$$
\frac{455}{576}|T_2(i,j)| \leq |Q_j(i)| \leq \frac{697}{576}|T_2(i,j)| \text{ and } \frac{137}{192}|T_2(i+1,j)| \leq |Q_j(i+1)| \leq \frac{247}{192}|T_2(i+1,j)|.
$$
Now, by using (\ref{estimate}) we obtain that
$\displaystyle{\frac{|T_2(i,j)|}{|T_2(i+1,j)|} = \left|\frac{b^3-1}{b-1} \right| \geq \frac{1+q^3}{1+q} \geq 3}.$
So
$$
\frac{|Q_j(i)|}{|Q_j(i+1)|} \geq \frac{455/576|T_2(i,j)|}{247/192|T_2(i+1,j)|} \geq \frac{455/576}{247/192}\cdot 3 = \frac{35}{19} >1. 
$$
Second, suppose that $j=3$. We use (\ref{consecutive-ratio}) and (\ref{useful-ratio}) together with the estimate (\ref{estimate}) to obtain the following inequalities: $\displaystyle{\frac{|T_1(i,j)|}{|T_3(i,j)|} \leq \frac{(1+q^{-(d-2)})(1+q^3)}{(q^2-1)(1+q)^2q^2} \leq \frac{17}{192}}$,\  \ $\displaystyle{\frac{|T_2(i,j)|}{|T_3(i,j)|} \leq \frac{(1+
q^{-(d-2)})(1+q^3)}{(1+q)^2q^2} \leq \frac{17}{64}}$, $\displaystyle{\frac{|T_0(i+1,j)|}{|T_2(i+1,j)|} \leq \frac{(1+q^{-(d-1)})q^{2}}{(q^3+1)(q^2-1)} \leq \frac{11}{72}}$, and
$\displaystyle{\frac{|T_1(i+1,j)|}{|T_2(i+1,j)|}\leq \frac{1+q^{-(d-1)}}{1+q} \leq \frac{11}{32}}$. By using these inequalities we obtain that
$$
\frac{31}{48}|T_3(i,j)| \leq |Q_j(i)| \leq \frac{65}{48}|T_3(i,j)| \text{ and } \frac{145}{288}|T_2(i+1,j)| \leq |Q_j(i+1)| \leq \frac{431}{288}|T_2(i+1,j)|.
$$
Now, by using (\ref{estimate}) we obtain that
$\displaystyle{\frac{|T_3(i,j)|}{|T_2(i+1,j)|} = \left|\frac{(b-1)b^d}{b^{d-2}-1} \right| \geq \frac{(1+q)q^2}{(1+q^{-(d-2)})} \geq \frac{192}{17}}.$
So,
$$
\frac{|Q_j(i)|}{|Q_j(i+1)|} \geq \frac{31/48|T_3(i,j)|}{431/288|T_2(i+1,j)|} \geq \frac{31/48}{431/288}\cdot \frac{192}{17} = \frac{35712}{7327} >1. 
$$
Third, suppose that $j\geq 4$. Use (\ref{consecutive-ratio}) and (\ref{useful-ratio}) together with  (\ref{estimate}) to get the following inequalities: $\displaystyle{\frac{|T_1(i,j)|}{|T_3(i,j)|} \leq \frac{(1+q^{-(d-2)})(1+q^3)}{(1-q^{-(j-2)})(1+q)q^5} \leq \frac{17}{128}}$ and $\displaystyle{\frac{|T_2(i,j)|}{|T_3(i,j)|} \leq \frac{(1+
q^{-(d-2)})(1+q^3)}{(1-q^{-(j-2)})(1+q)q^3} \leq \frac{17}{32}}$. Also, \newline $\displaystyle{\frac{|T_0(i+1,j)|}{|T_2(i+1,j)|} \leq \frac{(1+q^{-(d-1)})}{(1-q^{-(j-1)})q^3} \leq \frac{33}{224}}$ and $\displaystyle{\frac{|T_1(i+1,j)|}{|T_2(i+1,j)|} \leq \frac{(1+q^{-(d-1)})(q^2-1)}{(1-q^{-(j-1)})(1+q)q^2} \leq \frac{33}{112}}$. 
\newline By using these inequalities we obtain that
$$
\frac{43}{128}|T_3(i,j)| \leq |Q_j(i)| \leq \frac{213}{128}|T_3(i,j)| \text{ and } \frac{125}{224}|T_2(i+1,j)| \leq |Q_j(i+1)| \leq \frac{323}{224}|T_2(i+1,j)|.
$$
Now, by using (\ref{estimate}) we obtain that
$\displaystyle{\frac{|T_3(i,j)|}{|T_2(i+1,j)|} = \left|\frac{(b^{j-2}-1)b^{3-j+d}}{b^{d-2}-1} \right| \geq \frac{(1-q^{-(j-2)})q^3}{(1+q^{-(d-2)})} \geq \frac{96}{17}}.$
So
\begin{align*}
\frac{|Q_j(i)|}{|Q_j(i+1)|} \geq \frac{43/128|T_3(i,j)|}{323/224|T_2(i+1,j)|} \geq \frac{43/128}{323/224}\cdot \frac{96}{17} = \frac{7224}{5491} >1. \tag*{\qedhere} 
\end{align*}
\end{proof}
%------------------------------------------------

%------------------------------------------------
\begin{lemma}\label{lemma11}
Let $q \geq 2$ and $d\geq 6$. If $j\geq 5$ and $i = d-4$, then $ |Q_j(i)| > |Q_j(i+1)|$. 
\end{lemma}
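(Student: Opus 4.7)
The plan is to mirror the approach of Lemma \ref{lemma10} at one index lower. With $i = d-4$ and $j\geq 5$, set $h := h_{\max}(i,j) = 4$ and $h' := h_{\max}(i+1,j) = 3$. Both $i = d-4$ and $i+1 = d-3$ lie in the range $1\leq \,\cdot\, \leq d-3$ covered by Lemma \ref{terms_inc}, so the terms of $Q_j(d-4)$ and $Q_j(d-3)$ increase in absolute value; moreover, since $j\geq d-i+1 = 5$ and $j\geq d-(i+1)+1 = 4$, both sums fall in case (iii) of Lemma \ref{threeparts}. I will write $Q_j(d-4) = \sum_{h=0}^{4} T_h(d-4,j)$ and $Q_j(d-3) = \sum_{h=0}^{3} T_h(d-3,j)$ and estimate each as its dominant term plus a controlled error.

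First, using the ratio formula (\ref{consecutive-ratio}) together with the estimate (\ref{estimate}), and invoking the two-step ratio (\ref{useful-ratio}) where it produces a sharper bound, I would derive explicit upper bounds for the four quotients $|T_k(d-4,j)|/|T_4(d-4,j)|$, $k=0,1,2,3$, and for the three quotients $|T_k(d-3,j)|/|T_3(d-3,j)|$, $k=0,1,2$, valid uniformly for $q\geq 2$, $d\geq 6$, $j\geq 5$. Summing via the triangle inequality yields constants $c_1, c_4$ with
\[
c_1\,|T_4(d-4,j)| \;\leq\; |Q_j(d-4)| \qquad\text{and}\qquad |Q_j(d-3)| \;\leq\; c_4\,|T_3(d-3,j)|.
\]

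Second, I would evaluate the main-term ratio directly. Using $\binom{j-4}{2} - \binom{j-3}{2} = 4-j$ together with the telescoping identity ${d-4 \brack d-j}_b \big/ {d-3 \brack d-j}_b = (b^{j-3}-1)/(b^{d-3}-1)$, one obtains
\[
\frac{|T_4(d-4,j)|}{|T_3(d-3,j)|} \;=\; q^{d-j+4}\left|\frac{b^{j-3}-1}{b^{d-3}-1}\right| \;\geq\; \frac{q^{4}\bigl(1-q^{-(j-3)}\bigr)}{1+q^{-(d-3)}} \;\geq\; \frac{(1-q^{-2})q^{4}}{1+q^{-3}},
\]
which is at least $32/3$ at $q=2$ and grows with $q$. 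Combining the two steps yields
$|Q_j(d-4)|/|Q_j(d-3)| \geq (c_1/c_4)\cdot (1-q^{-2})q^4/(1+q^{-3})$, and the lemma follows once this is shown to exceed $1$.

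The main obstacle is that the generic constants from Lemma \ref{threeparts}(iii) give only $c_1/c_4 = 31/401$, and $(31/401)\cdot(32/3)\approx 0.82 < 1$; hence the proof cannot simply quote Lemma \ref{threeparts}(iii) as a black box. The key step is therefore to refine the constants $c_1$ and $c_4$ for the specific indices $i=d-4$ and $i=d-3$ by using the full geometric decay of $|T_{h-1}|/|T_h|$ made explicit in the proof of Lemma \ref{terms_inc} (inequalities (\ref{eqn1})--(\ref{eqn6})), exactly as was done in Lemmas \ref{lemma9} and \ref{lemma10}. With only three or four subdominant terms in each sum, I expect the refined ratio $c_1/c_4$ to exceed $3/32$ comfortably for every $q\geq 2$, closing the argument.
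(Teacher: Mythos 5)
Your proposal follows essentially the same route as the paper's proof: bound $|Q_j(d-4)|$ below by a constant multiple of $|T_4(d-4,j)|$ and $|Q_j(d-3)|$ above by a constant multiple of $|T_3(d-3,j)|$ via refined ratio estimates at these specific indices (the paper uses the sign-pattern bound (\ref{upperbound}) so it only needs the ratios $|T_2|/|T_4|$, $|T_3|/|T_4|$ and $|T_1|/|T_3|$, $|T_2|/|T_3|$), and then multiply by the main-term ratio, which you compute correctly as at least $(1-q^{-2})q^4/(1+q^{-3}) \geq 32/3$, exactly as in the paper. Your correctly identified obstacle and its remedy are borne out: the paper's refined constants are $91/256$ and $703/448$, whose quotient $\approx 0.226$ indeed exceeds $3/32$, so the only step you leave unexecuted is a routine computation that does close the argument.
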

\begin{proof} We have $h_{\max}(i,j) = 4$ and $h_{\max}(i+1,j) = 3$. We use (\ref{consecutive-ratio}) and (\ref{useful-ratio}) together with the estimate (\ref{estimate}) to obtain the following inequalities: $\displaystyle{\frac{|T_2(i,j)|}{|T_4(i,j)|} \leq \frac{(1+q^{-(d-3)})(q^4-1)(q^3+1)}{(1-q^{-(j-3)})(q^2-1)(q+1)q^7} \leq \frac{45}{256}}$ and $\displaystyle{\frac{|T_3(i,j)|}{|T_4(i,j)|} \leq \frac{(1+ q^{-(d-3)})(1-q^{-4})}{(1-q^{-(j-3)})(q+1)} \leq \frac{15}{32}}$. Also,  $\displaystyle{\frac{|T_1(i+1,j)|}{|T_3(i+1,j)|} \leq \frac{(1+q^{-(d-2)})(q^3+1)q^{-5}}{(1-q^{-(j-2)})(q+1)} \leq \frac{51}{448}}$ and $\displaystyle{\frac{|T_2(i+1,j)|}{|T_3(i+1,j)|} \leq \frac{(1+q^{-(d-2)})(q^3+1)}{(1-q^{-(j-2)})(q+1)q^3} \leq \frac{51}{112}}$. By using these inequalities we obtain that
$$
\frac{91}{256}|T_4(i,j)| \leq |Q_j(i)| \leq \frac{421}{256}|T_4(i,j)| \text{ and } \frac{193}{448}|T_3(i+1,j)| \leq |Q_j(i+1)| \leq \frac{703}{448}|T_3(i+1,j)|.
$$
Now, by using (\ref{estimate}) we obtain that
$\displaystyle{\frac{|T_4(i,j)|}{|T_3(i+1,j)|} = \left|\frac{(b^{j-3}-1)b^{4-j+d}}{b^{d-3}-1} \right| \geq \frac{(1-q^{-(j-3)})q^4}{(1+q^{-(d-3)})} \geq \frac{32}{3}}.$
So
\begin{align*}
\frac{|Q_j(i)|}{|Q_j(i+1)|} \geq \frac{91/256|T_4(i,j)|}{703/448|T_3(i+1,j)|} \geq \frac{91/256}{703/448}\cdot \frac{32}{3} = \frac{5096}{2109} >1. \tag*{\qedhere} 
\end{align*}
\end{proof}

\begin{theorem}\label{HermitianTheorem}
Let $q \geq 2$, $d\geq 6$, and $j\geq 1$.
\begin{enumerate}[(i)]
    \item Then $|Q_j(i)| > |Q_j(i+1)|$ for $0 \leq i\leq d-1$ except if $q=2$, $i =d-1$, and $j$ is equal to $d$ or is even.
    \item If $j$ is odd, then $Q_j(1) \leq Q_j(i)$ for $0 \leq i \leq d$.
    \item If $j$ is even, $j\geq 2$, then $Q_j(d-j+2) \leq Q_j(i)$ for $0\leq i \leq d$.
    \item Then $|Q_j(1)| > |Q_j(i)|$ for $2 \leq i \leq d$.
\end{enumerate}
\end{theorem}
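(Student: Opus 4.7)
The plan is to assemble the lemmas of this section into the four claimed statements via a case analysis on $(i,j)$. For part (i), when $j=1$ and $1\le i\le d-1$, Lemma~\ref{lemma3} supplies the monotonicity. When $j\ge 2$, the range $1\le i\le d-5$ together with the subcase $i=d-4,\ 2\le j\le 4$ is Lemma~\ref{lemma7}; the complementary subcase $i=d-4,\ j\ge 5$ is Lemma~\ref{lemma11}; and the remaining values $i=d-3,d-2,d-1$ are handled by Lemmas~\ref{lemma10}, \ref{lemma9}, and \ref{lemma8} respectively, with Lemma~\ref{lemma8} producing exactly the single exception listed in the statement. The boundary case $i=0$ is addressed by observing that $Q_j(0)$ is the valency of $Q_q(d,j)$: either the bounds of Lemma~\ref{threeparts} extend verbatim to $i=0$ and yield $|Q_j(0)|>|Q_j(1)|$ directly, or one invokes the standard fact that the valency strictly dominates any other eigenvalue in absolute value for a connected non-bipartite distance-regular graph.

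Part (iv) is then an immediate consequence of (i) by telescoping the chain $|Q_j(1)|>|Q_j(2)|>\cdots>|Q_j(d)|$. In the $q=2$ exception at $i=d-1$ the chain is completed through $i=d-2$ using the additional inequality $|Q_j(d-2)|>|Q_j(d)|$ that Lemma~\ref{lemma8} explicitly produces, so $|Q_j(1)|>|Q_j(i)|$ remains valid for all $2\le i\le d$.

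For parts (ii) and (iii), we combine (i) with a sign analysis. Lemmas~\ref{threeparts} and~\ref{lemma9} identify the sign of $Q_j(i)$ with that of $T_{h_{\max}(i,j)}(i,j)$ for $1\le i\le d-1$; $Q_j(0)>0$ is the valency, and $Q_j(d)=T_0(d,j)$ has a sign that is read off the defining formula. Unpacking the sign of $T_h(i,j)$ from the parities of $(-q)^{\binom{j-h}{2}+hd}$, ${d-h\brack d-j}_b$, and ${d-i\brack h}_b$ (each Gaussian coefficient with base $b=-q$ having sign $(-1)^{(m+1)\ell}$), one shows that for $j$ odd, $Q_j(1)<0$, and for $j$ even with $j\ge 2$, $Q_j(i)>0$ for $0\le i\le d-j+1$ while $Q_j(d-j+2)<0$. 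Since $|Q_j(i)|$ is strictly decreasing in $i$ by (i), the most negative eigenvalue is the negative value of smallest index, namely $Q_j(1)$ in the odd case and $Q_j(d-j+2)$ in the even case, which is precisely the content of (ii) and (iii).

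The main obstacle I anticipate is the sign bookkeeping in (ii) and (iii): tracking the parity crossover of $T_{h_{\max}(i,j)}$ as $i$ passes through the transition $h_{\max}=j \leftrightarrow h_{\max}=d-i$, and confirming that the first negative term occurs exactly at the claimed index. No new quantitative estimate is required beyond those already established; everything else is direct case-chasing through the catalogue of lemmas above.
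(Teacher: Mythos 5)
Your proposal is correct and follows essentially the same route as the paper: the identical case split of part (i) across Lemmas \ref{lemma3}, \ref{lemma7}, \ref{lemma8}, \ref{lemma9}, \ref{lemma10}, \ref{lemma11} (with $Q_j(0)$ handled as the largest eigenvalue), the sign of $Q_j(i)$ read off from $T_{h_{\max}}(i,j)$ via Lemmas \ref{threeparts} and \ref{lemma9} to locate the first negative index ($i=1$ for $j$ odd, $i=d-j+2$ for $j$ even) for parts (ii)--(iii), and part (iv) obtained from (i) plus the second assertion of Lemma \ref{lemma8} in the $q=2$ exceptional case. The sign bookkeeping you flag as the remaining obstacle is exactly what the paper records in the closed form $(-1)^{ij+\binom{j-h_{\max}}{2}}$, and your stated conclusions from it agree with that formula.
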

\begin{proof}
\begin{enumerate}[(i)]
    \item It follows by Lemmas \ref{lemma3}, \ref{lemma7}, \ref{lemma8}, \ref{lemma9}, \ref{lemma10}, \ref{lemma11}, and the fact that $Q_j(0)$ is the largest eigenvalue of $Q_q(d,j)$.  Moreover, it imply that for the next two parts we only have to find the smallest value of $i$ such that $Q_j(i)$ is negative. By Lemmas \ref{lemma3}, \ref{threeparts}, and \ref{lemma9} the sign of $Q_j(i)$ is same as that of $T_{h_{\max}}(i,j)$. Note that the sign of $T_{h_{\max}}(i,j)$ is $(-1)^{ij+\binom{j-h_{\max}}{2}}$.
    \item If $j$ is odd, then smallest such value of $i$ is equal to 1. Thus, $Q_j(1) \leq Q_j(i)$ for $0\leq i \leq d$.
    \item If $j$ is even, then smallest such value of $i$ is equal to $d-j+2$. Thus, $Q_j(d-j+2) \leq Q_j(i)$ for $0 \leq i \leq d$. 
    \item Part (i) together with the second assertion of Lemma \ref{lemma8} imply that $|Q_j(1)| > |Q_j(i)|$ for $2 \leq i \leq d$. \qedhere
\end{enumerate}
\end{proof}

\section{Final remarks}

In this paper, we proved some conjectures from \cite{BCIM} related to the smallest eigenvalues of Grassmann graphs, Billinear forms graphs and Hermitian forms graphs. Let $q \geq 2$, $d\geq 1$ be integers. Let $Q$ be a set of size $q$. The Hamming scheme $H(d,q)$ is the association scheme with vertex set $Q^d$, and as relation the Hamming distance. For $0\leq j \leq d$
the vertex set of the graph $H(d,q,j)$ is $Q^d$ and two words in $Q^d$ are adjacent if their Hamming distance is $j$. The eigenvalues of $H(d,q,j)$ are $P_{ij} = K_j(i)$ ($0\leq i \leq d$), where 
\begin{align*}
    K_j(i) = \sum_{h=0}^{j}(-1)^h (q-1)^{j-h}\binom{i}{h}\binom{d-i}{j-h}.
\end{align*}
The following conjecture was also posed in \cite{BCIM} (see Conjecture 2.11) and is still open.
\begin{conjecture}\label{conjham}
If $H(d,q,j)$ is connected, it has more than $d/2$ distinct eigenvalues. 
\end{conjecture}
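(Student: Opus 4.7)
The plan is to show, for connected $H(d, q, j)$, that more than $d/2$ of the Krawtchouk values $K_j(0), K_j(1), \ldots, K_j(d)$ are pairwise distinct, combining several tools whose strength depends on the regime of $j$.

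First I would dispose of the easy cases. Since $K_j(x)$ is a polynomial in $x$ of degree exactly $j$ (with leading coefficient $(-q)^j/j!$), the equation $K_j(x) = v$ has at most $j$ solutions in $\{0, 1, \ldots, d\}$, so the number of distinct eigenvalues is at least $\lceil (d+1)/j \rceil$. Independently, in a connected graph the number of distinct eigenvalues strictly exceeds the diameter, and since every step along an edge of $H(d, q, j)$ changes exactly $j$ coordinates, the antipodal pair of words at Hamming distance $d$ forces the diameter to be at least $\lceil d/j \rceil$. Both bounds give more than $d/2$ distinct eigenvalues whenever $j \leq 2$.

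For intermediate $j$ I would combine a Rolle-type analysis with properties of orthogonal polynomials. The $K_j$ are orthogonal with respect to the binomial weight, so $K_j$ has exactly $j$ simple real zeros in $(0, d)$ and $K_j'$ has exactly $j-1$ simple real zeros there, making $K_j$ piecewise monotone on at most $j$ subintervals of $[0, d]$. On each monotone piece $K_j$ is injective on integers, which bounds the multiplicity of every value by $j$. Partitioning the integers into value-classes of sizes $k_1, \ldots, k_m$ with $\sum k_r = d+1$, each gap within a class forces a local extremum in the sequence, and an overlap argument controlling how many classes can share the same local extremum, together with quantitative separation of the zeros of $K_j$, should push the lower bound to roughly $m \geq d + 2 - j$, sufficient for $j \leq d/2 + 1$.

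For large $j$ I would invoke duality. In the binary case, connectedness of $H(d, 2, j)$ forces $j$ odd (otherwise the graph splits by weight parity), and the reflection $K_j(d - i) = -K_j(i)$ reduces counting distinct eigenvalues to counting distinct values of $|K_j|$ on $\{0, 1, \ldots, \lfloor d/2 \rfloor\}$, after which the previous bounds apply. For $q \geq 3$ the Krawtchouk reciprocity $\binom{d}{i}(q-1)^i K_j(i) = \binom{d}{j}(q-1)^j K_i(j)$ trades large $j$ for large $i$ in a lower-degree polynomial, which then falls into the earlier regime.

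The main obstacle I expect is the range $j \approx d/2$, especially for $q = 2$ with $j$ odd close to $d/2$, where the critical points of $K_j$ cluster most densely and both the polynomial-degree and Rolle bounds become nearly tight. Overcoming this almost certainly requires sharp quantitative control on the spacing of zeros of Krawtchouk polynomials, for example via the Christoffel--Darboux formula or saddle-point asymptotics in the regime $j/d \to 1/2$; this seems to be where the conjecture runs deepest, which is consistent with its remaining open.
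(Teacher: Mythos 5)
There is nothing in the paper to compare your argument against: Conjecture \ref{conjham} is stated in the final remarks precisely as an \emph{open} problem (quoted from Conjecture 2.11 of the BCIM paper), and the present paper proves nothing about it. So the relevant question is only whether your proposal itself closes the problem, and it does not — as you yourself concede in your last paragraph. The easy cases are fine: the degree bound (each value of the degree-$j$ polynomial $K_j$ is attained at most $j$ times, giving at least $\lceil (d+1)/j\rceil$ distinct eigenvalues) and the diameter bound both handle $j\le 2$. But the heart of the matter is the intermediate range, and there your key quantitative claim, $m \gtrsim d+2-j$ distinct values, rests on an ``overlap argument'' that is exactly the missing ingredient. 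Rolle only tells you that two integer points with equal $K_j$-value enclose a zero of $K_j'$; since there are only $j-1$ such zeros you would be done if distinct coincidences could not share a critical point, but they can: several different values may each be attained on both sides of the same local extremum, so the number of collisions is not bounded by $j-1$ without genuinely new information about the local behaviour of $K_j$ near its extrema (this is where spacing-of-zeros asymptotics would have to enter, and you do not supply them).

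The reductions for large $j$ also do not deliver what you need. For $q\ge 3$, Krawtchouk reciprocity $\binom{d}{i}(q-1)^i K_j(i)=\binom{d}{j}(q-1)^j K_i(j)$ relates \emph{weighted} values: equality $K_j(i_1)=K_j(i_2)$ is equivalent to an equality of $\binom{d}{i}(q-1)^i$-weighted quantities in the dual variable, not of $K_{i_1}(j)$ and $K_{i_2}(j)$ themselves, so distinctness does not transfer and the ``earlier regime'' is not actually reached. For $q=2$ with $j$ odd, the symmetry $K_j(d-i)=-K_j(i)$ halves the range but feeds straight back into the unresolved middle range $j\approx d/2$; also note that $j$ odd is necessary but not sufficient for connectivity (e.g.\ $j=d$ gives a perfect matching), so even the setup of that case needs care. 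In short, the proposal is a reasonable research plan whose decisive step — quantitative control of coincidences of Krawtchouk values for $3\le j\le d/2+O(1)$ — is not carried out, which is consistent with the conjecture remaining open in the literature and in this paper.
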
 
\section*{Acknowledgments} We thank Ferdinand Ihringer for his feedback regarding this paper.

%-----References-----

\end{document}